\theoremstyle{plain}
\newtheorem{theorem}{Theorem}[section]
\newtheorem{lemma}[theorem]{Lemma}
\theoremstyle{definition}
\theoremstyle{remark}
\newtheorem*{remark}{Remark}
\newtheoremstyle{mytheorem}
  {3pt}
  {3pt}
  {\itshape}
  {}
  {\itshape}
  {.}
  {.5em}
  {}
\theoremstyle{mytheorem}
\newtheorem{question}{Open problem}
\numberwithin{equation}{section}
\numberwithin{theorem}{section}
\numberwithin{table}{section}
\numberwithin{figure}{section}
\title{Nested Recursions, Simultaneous Parameters and Tree Superpositions}
\author[A. Isgur, V. Kuznetsov, M. Rahman, and S. Tanny]{Abraham Isgur \and Vitaly Kuznetsov \and Mustazee Rahman \and Stephen Tanny}
\address[Abraham Isgur, Mustazee Rahman, and Stephen Tanny]{Department of Mathematics\\
University of Toronto\\
40 St. George Street\\
Toronto\\
ON M5S 2E4\\
Canada}
\address[Vitaly Kuznetsov]{Courant Institute of Mathematical Sciences\\
New York University\\
251 Mercer Street\\
New York\\
NY 10012-1185\\
USA}
\email[Abraham Isgur]{umarovi@gmail.com}
\email[Vitaly Kuznetsov]{vitaly@cims.nyu.edu}
\email[Mustazee Rahman]{mustazee.rahman@utoronto.ca}
\email[Stephen Tanny]{tanny@math.utoronto.ca}
\date{\today}
\subjclass[2000]{Primary 11B37, 05C05; Secondary 05A15, 05A19}
\keywords{Nested recursion, meta-Fibonacci sequence, $(\alpha,\beta)$-Conolly sequence, simultaneous parameter, slowly growing (or slow) sequence, frequency function, tree superposition}
\thanks{Mustazee Rahman's research was supported by a NSERC CGS grant. Vitaly Kuznetsov was partially supported by an Ontario Graduate Scholarship and a NSERC PGS grant.}
\begin{document}

\begin{abstract}
We apply a tree-based methodology to solve new, very broadly defined families of nested recursions of the general form 
$R(n)=\sum_{i=1}^kR(n-a_i-\sum_{j=1}^{p}R(n-b_{ij}))$, where $a_i$ are integers, $b_{ij}$ are natural numbers, and $k,p$ are natural numbers that we use to denote ``arity" and ``order," respectively, and with some specified initial conditions. The key idea of the tree-based solution method is to associate such recursions with infinite labelled trees in a natural way so that the solution to the recursions solves a counting question relating to the corresponding trees. We characterize certain recursion families within $R(n)$ by introducing ``simultaneous parameters" that appear both within the recursion itself and that also specify structural properties of the corresponding tree. First, we extend and unify recently discovered results concerning two families of arity $k=2$, order $p=1$ recursions. Next, we investigate the solution of nested recursion families by taking linear combinations of solution sequence frequencies for simpler nested recursions, which correspond to superpositions of the associated trees; this leads us to identify and solve two new recursion families for arity $k=2$ and general order $p$. Finally, we extend these results to general arity $k>2$. We conclude with several related open problems.

\end{abstract}

\maketitle

\begin{section}{Introduction}\label{sec:intro}

In this paper, all values of the parameters and variables are integers.

Loosely speaking, a nested recurrence relation (also called a meta-Fibonacci recursion) is any recursion where some argument contains a term of the recursion. In a series of recent papers (see \cite{BLT, ConollyLike, Nonhom, Rpaper, JR, DR}) infinite labelled trees are used to solve certain families of nested recursions with the following general form:
\begin{equation}\label{eq:Conolly}
R(n)=\sum_{i=1}^kR(n-a_i-\sum_{j=1}^{p}R(n-b_{ij})),
\end{equation}
where $a_i$ are integers, $b_{ij},k$, and $p$ are natural numbers, and with some specified initial conditions. We call $k$ and $p$ the ``arity" and ``order," respectively, of the recursion, and refer to a recursion with arity $k$ and order $p$ as $k$-ary order $p$. Sometimes we refer to a recursion of the form $(\ref{eq:Conolly})$ as a generalized Conolly-Hofstadter (CH) recursion, for reasons which will become clear below.

A solution to $(\ref{eq:Conolly})$, if it exists, is the (unique) sequence that satisfies the recursion together with its initial conditions. In what follows we often use $R(n)$ or $R$ to refer both to the recursion and its solution, if one exists\footnote{For example, we use $Q(n)$ or $Q$ to refer to Hofstadter's nested recursion, which is defined in \cite{GEB} by $Q(1) = Q(2) = 1$ and $Q(n) = Q(n-Q(n-1)) + Q(n-Q(n-2))$ for $n >2$. $Q$ is a famous example where it is not
known whether or not a solution exists, although the first billion $Q$ recursion values have been computed.}.

We say that a solution sequence is \emph{slowly growing} or \emph{slow} if it has the property that successive differences are either 0 or 1 and it tends to infinity.  Any slowly growing sequence $A(n)$ can be described by its \emph{frequency sequence} $\phi_A(v)$, which counts the number of times that $v>0$ occurs in $A(n)$.

It is evident that the nesting structure of $(\ref{eq:Conolly})$ makes it impossible to apply the usual techniques used for solving (ordinary) difference equations, such as characteristic polynomials and generating functions.\footnote{In some cases (see \cite{ConollyLike, JR} for examples) one can derive generating functions, difference sequences, and frequency sequences for a solution to the nested recursion, but only as a result of prior analysis of the nature of the solution sequence.} Further, except in the simplest cases, there is no explicit or closed form for the solution.

As in \cite{BLT, ConollyLike, Nonhom, Rpaper, JR, DR}, we solve the recursion using our ``tree-based" methodology. By this we mean that we show the existence of an infinite sequence that satisfies the recursion together with its initial conditions, where the $n^{th}$ term of the solution sequence has a counting interpretation in terms of a labelled infinite tree. In this combinatorial interpretation, we have an infinite tree, labelled with integers in preorder, and the solution sequence to the nested recursion counts labels (or some analogue) on the leaves of this tree. It follows that this solution method will naturally identify a slow solution, which is why we restrict ourselves to such solutions in this paper.\footnote{See \cite{IVT} where the tree-based methodology is modified to derive a combinatorial interpretation for a solution sequence with successive differences that are either 0 or $d>1$. Also, in \cite{IR} some nested recursions with slow solutions are studied that do not have a combinatorial interpretation.}

A fundamental contribution of \cite{Rpaper, JR} has been to locate what we call here ``simultaneous parameters."\footnote{In these earlier papers we referred to these parameters as ``shift" parameters. Our new terminology emphasizes the greater generality of these parameters and the dual role they play in both the nested recursion and its corresponding infinite tree.} These are parameters that both appear in the recursion and that also correspond to structural properties of the infinite tree used to derive and interpret its solution. For example, in \cite{Rpaper}, it is shown that the parameters $s \geq 0$ and $j \geq 1$ can be introduced into both the original Conolly recursion (see \cite{Con})
\begin{equation}
\label{eq:0112}
C(n)=C(n-C(n-1))+C(n-1-C(n-2)),\space \space C(1)=1,C(2)=2
\end{equation}
and the original $H$ recursion (see \cite{BLT})
\begin{equation}
\label{eq:0123}
H(n)=H(n-H(n-1))+H(n-2-H(n-3)),\space \space H(1)=H(2)=1,H(3)=2
\end{equation}
to create the more general recursion families
\begin{equation}
\label{eq:0jj2j}
R_{s,j}(n)=R_{s,j}(n-s-R_{s,j}(n-j))+R_{s,j}(n-s-j-R_{s,j}(n-2j))
\end{equation}
and
\begin{equation}\label{eq:0j2j3j}
H_{s,j}(n)=H_{s,j}(n-s-H_{s,j}(n-j))+H_{s,j}(n-s-2j-H_{s,j}(n-3j)).
\end{equation}
Further, and most importantly, it is shown how to alter the labelling of the infinite binary trees corresponding to the solution sequence to $C(n)$ and $H(n)$ respectively, to create new labelled infinite binary trees that correspond to the solution sequences for the more general recursion families including the additional parameters $s$ and $j$.  For example, to derive the solution for  (\ref{eq:0jj2j}), $s$ labels are inserted in the previously empty nodes along the upper spine of the infinite binary tree, and $j$ labels are inserted in each node rather than 1 label per node (see \cite{Rpaper} for a detailed explanation). The other simultaneous parameters $k$ and $p$ in (\ref{eq:Conolly}) are discussed in \cite{DR} and \cite{ConollyLike}.

Identifying simultaneous parameters has proven to be a very powerful way of expanding the range of nested recursions that we can solve. For this reason there is significant interest in finding more such parameters. Once one has been found, the tree methodology offers an effective way to prove how the simultaneous parameter affects the solution to the nested recursion.

The search for new families of nested recursions that can be defined by identifying simultaneous parameters is the starting point for this paper. In Section \ref{sec:order2} we introduce the new simultaneous parameter $m$ into ($\ref{eq:0jj2j}$) and use the tree methodology to solve the resulting order 1 recursion, namely,
\begin{equation}\label{eq:m}
R_{s,j,m}(n)=R_{s,j,m}(n-s-R_{s,j,m}(n-j))+R_{s,j,m}(n-s-j-m-R_{s,j,m}(n-2j-m))
\end{equation}
with $s$ a nonnegative integer, $j$ a natural number, $m$ an integer with $0 \leq m \leq j$, and with appropriate initial conditions.

Observe that when $m=0$ ($\ref{eq:m}$) is identical to $(\ref{eq:0jj2j})$ while when $m=j$ we have $(\ref{eq:0j2j3j})$. Thus, the more general recursion family ($\ref{eq:m}$) contains the above two previously known but seemingly unrelated recursion families as special cases, and also introduces all of the intermediate families of recursions lying ``between" these two previously unconnected recursion families. Thus, by solving (\ref{eq:m}) we are able to unify and extend the results in \cite{Rpaper} in an important way.

In view of this beautiful and unexpected result, it is natural to ask if it is possible to extend other known results about families of nested recursions by combining the simultaneous parameters $k,s,j,m$ and $p$ in interesting ways. For example, recall that in \cite{ConollyLike} the so-called $(\alpha,\beta)$-Conolly recursion of order $p$ is defined as
\begin{equation} \label{eq:alphabeta1}
R(n) = R(n- \sum_{i=1}^p R(n-2i+1)) + R(n-\alpha-\beta - \sum_{i=1}^p R(n-\alpha-\beta-2i+1))
\end{equation}
with $\alpha$ even,  $\beta \geq 0$, $\alpha + \beta \geq 1$ and $p = \alpha/2 + \beta$. With appropriate initial conditions this recursion has a slow, \textit{Conolly-like} solution sequence; that is, its frequency sequence is of the form $\alpha +\beta \phi_C(m)$, where $C$ is the Conolly sequence $(\ref{eq:0112})$. Since it is known (see \cite{Rpaper}) that the frequency sequence of the $H$ sequence ($\ref{eq:0123}$) is the constant sequence 2, it follows that the frequency sequence for the solution to the order $p$ nested recursion (\ref{eq:alphabeta1}) can be written as a linear combination of the frequency sequences to the two simple order 1 nested recurrences $H(n)$ and $C(n)$ defined above. In that sense we can view (\ref{eq:alphabeta1}) as an order $p$ extension of these two latter order 1 recursions.

In Section \ref{sec:orderp} we show how to introduce simultaneous parameters $s$, $j$ and $m$ into ($\ref{eq:alphabeta1}$) in a natural way. Subsequently, we use tree-based solution methods to solve the resulting nested recursion. We identify some interesting analogies between the solution to the extended order $p$ recursion and the order 1 recursion ($\ref{eq:m}$) that also contains these same simultaneous parameters.

As it turns out, however, the solution to the more general order $p$ nested recursion defined in Section \ref{sec:orderp} is not entirely analogous to that for the original $(\alpha,\beta)$-Conolly order 1 recursion. In particular, its frequency sequence \textit{fails} to have the elegant property that it is a linear combination of the frequency sequences of the solutions for ($\ref{eq:0jj2j}$) and ($\ref{eq:0j2j3j}$), which are the $s,j$ extensions to the original $C$ and $H$ recursions.

We address this issue in Section \ref{sec:linearcomb}, where we enhance the tree-based methodology via the notion of \textit{tree superposition} to derive a different 2-ary, order $p$ nested recursion whose solution \emph{does} have the desired property that its frequency sequence is a linear combination of the frequency sequences of the solutions for ($\ref{eq:0jj2j}$) and ($\ref{eq:0j2j3j}$).  In so doing we demonstrate the power of the simultaneous parameter approach to solving nested recursions: we are led to the discovery of the form of this alternate order $p$ recursion through an understanding of the structure of the labelled infinite tree that would be required to provide the desired solution property. This approach is a sort of ``reverse engineering" of the analytical process we have followed to this point, where we have used the tree methodology only to solve a given nested recursion. Once the new recursion is identified in this way, we apply the tree methodology to derive its solution.

In Section \ref{sec:karyorderp} we continue our study of nested recursions via the lens of simultaneous parameters by extending our approach to certain $k$-ary, order $p$ recursion families. In particular we introduce the simultaneous parameter $k \geq 3$ (for the arity of the recursion) into (\ref{eq:alphabeta1}), combining this with the parameter $m$ already introduced in Section \ref{sec:linearcomb}, to yield a new $k$-ary order $p$ family containing $m$. We show that for appropriate choices of $m$ and $p$ the solution of this $k$-ary recursion has frequency sequence $\gamma k + \delta \phi_{C_k}$, where $C_k$ is the solution of the $k$-ary Conolly recursion studied in \cite{DR}.

We conclude in Section \ref{sec:conclusion} with some open questions and comments about future directions for this work.

\end{section}


\begin{section}{Unifying Two Seemingly Unrelated Recursion Families} \label{sec:order2}
This section concerns the process of finding and proving a combinatorial interpretation for the recursion ($\ref{eq:m}$). In so doing we unify and extend the work in \cite{Rpaper} where a tree-based approach is used to solve what appear to be the two unrelated families of nested recursions defined by  $(\ref{eq:0jj2j})$ and $(\ref{eq:0j2j3j})$. We show that in fact these are the natural extreme points of a continuum of families of nested recursions defined by the introduction of the parameter $m$.

Fix $s \geq 0$,  $j \geq 1$, and $m$ with $0\leq m \leq j$. For $m<0$ or $m>j$, the recursions $(\ref{eq:m})$ seem to always be undefined; we will see some heuristic justification for this in the combinatorial interpretation to come. In general, for fixed $s,j,m$ and where there is no confusion, we omit the subscripts and just write $R(n)$.

Define $T = T_{s,j,m}$ to be the following tree. First, draw a skeleton of an infinite binary tree (see Figure \ref{fig:skeleton}). We call the nodes on the extreme left except for the very first node on the bottom left \textit{supernodes} (see the bold boxes in the diagram). All the nodes on the bottom level (including the bottom leftmost node) are called \textit{leaves}, while all other nodes are \textit{regular nodes}. By the left (right) subtree of a node $X$ we mean the left (right) child of $X$ together with all descendants of that child. The leaves that are left (right) children of their parent are called \textit{left (right)} leaves. Nodes on the second level of $T$ are called \textit{penultimate} nodes and are the \textit{parents} of the leaves. Finally, subdivide each leaf into $j$ cells.

\begin{figure}[h]
\includegraphics[scale=.14]{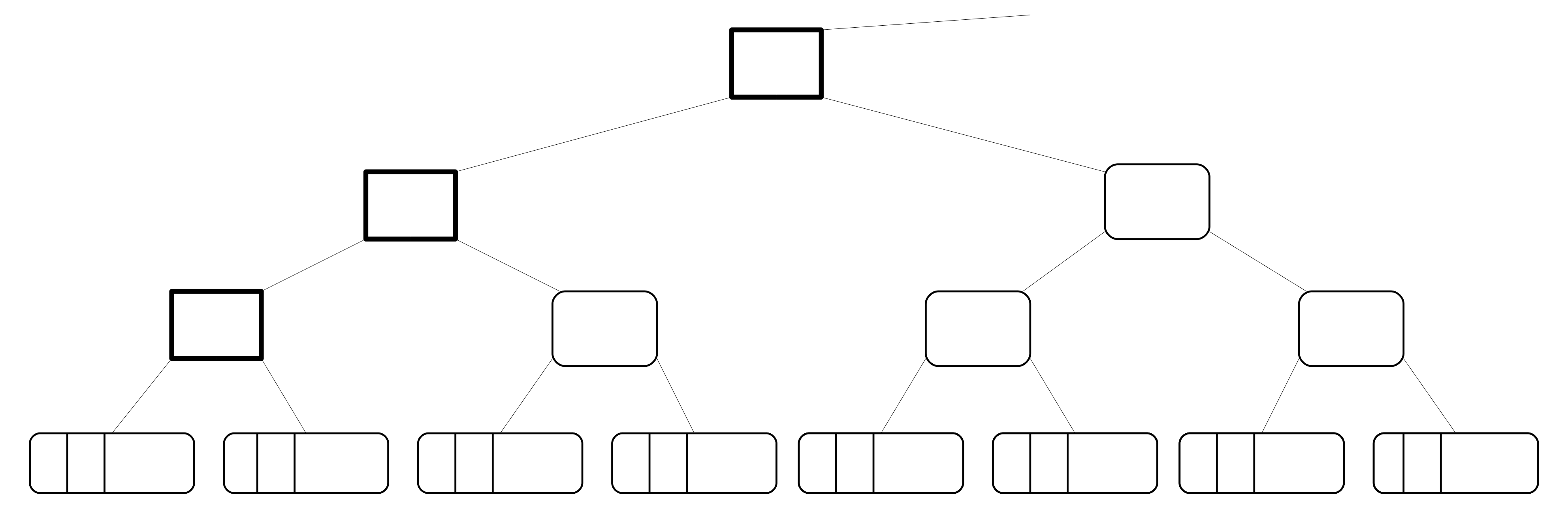}
\caption{Skeleton of an infinite binary tree with $j=3$ cells in each leaf.}\label{fig:skeleton}	
\end{figure}

For each $n \geq 1$ let $T(n)$ denote the infinite tree $T$ with $n$ labels, where these labels are inserted in the nodes of $T$ in preorder as follows: Insert $s$ labels into each supernode, $j-m$ labels into each regular node, and $j+m$ labels into each leaf, placing 1 label in every leaf cell but the last, and $1+m$ labels into the last cell of each leaf. Continue in this way until we have placed $n$ labels in total in preorder. See Figure \ref{fig:r131} for our running example in this section, the tree $T_{1,3,1}(31)$, with $s=1,j=3, m=1$, and $n=31$.

\begin{figure}[h]
\includegraphics[scale=.15]{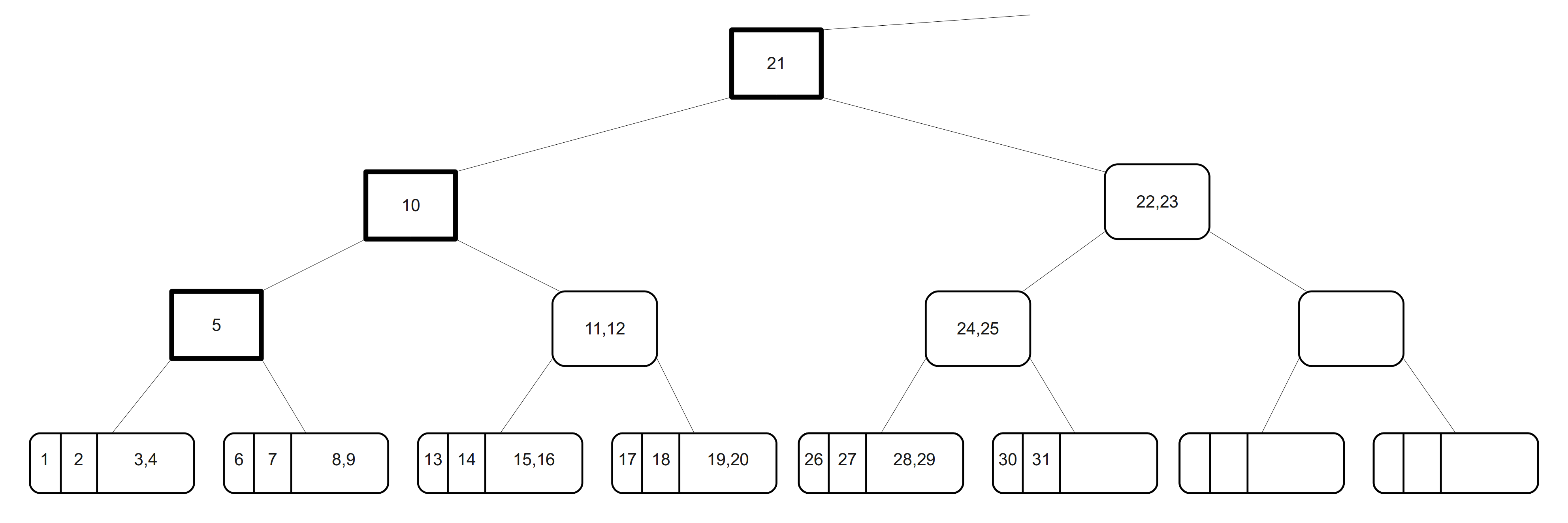}
  \caption{The tree $T_{1,3,1}(31)$, corresponding to the value of $R_{1,3,1}(31)=17$ in the solution sequence to the recursion ($\ref{eq:m}$).}\label{fig:r131}	
\end{figure}

Define the leaf cell counting function $C_T(n)$ to be the number of nonempty cells (that is, cells with at least one label) in the leaves of $T(n)$. In the running example, $C_T(16)=9$. We say that a recursion $R$ with corresponding tree $T$ has initial conditions that \textit{follow the tree} up to $t$ if $R(n)=C_T(n)$ for $1\leq n\leq
t$. For example, for the recursion $R_{1,3,1}(n)$ above, the initial conditions 1,2,3,3,3,4,5,6,6 follow the tree $T_{1,3,1}(n)$ up to $t=9$, which coincides with the last label in the second leaf.

The key result in this section is that the leaf cell counting function satisfies ($\ref{eq:m}$) with sufficiently many initial conditions that follow the tree. More precisely:

\begin{theorem} \label{thm:mmain}
Suppose that the recursion $(\ref{eq:m})$ has initial conditions $R(n)=C_T(n)$ for $n \leq 5j+3m+2s$, that is, the initial conditions follow the tree until the right leaf of the second penultimate level node. Then for all $n$, $R(n)=C_T(n)$, that is, $C_T(n)$ solves the recursion.
\end{theorem}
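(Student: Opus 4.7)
The plan is to prove $R(n)=C_T(n)$ for all $n$ by strong induction on $n$. The theorem's hypothesis supplies the base case for $n \le 5j+3m+2s$; for larger $n$ I assume $R(k)=C_T(k)$ for every $k<n$ and must verify
\begin{equation*}
C_T(n) \;=\; C_T\bigl(n - s - C_T(n-j)\bigr) \;+\; C_T\bigl(n - s - j - m - C_T(n-2j-m)\bigr).
\end{equation*}
The first step is to locate, in the tree $T$, the position receiving the $n$-th preorder label: either a supernode, a regular node (first or interior position), or one of the $j$ cells of a leaf (with the last cell having capacity $1+m$ rather than $1$). Then $C_T(n)=C_T(n-1)+1$ exactly when label $n$ opens a previously empty leaf cell, and $C_T(n)=C_T(n-1)$ otherwise.

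The central technical work is to identify, under the inductive hypothesis, the labels that the two subtracted expressions $n-s-R(n-j)$ and $n-s-j-m-R(n-2j-m)$ point at. The tree's per-node label budget is calibrated to the recursion's shifts: a supernode holds $s$ labels, a regular node holds $j-m$ labels, and a leaf holds $j+m$ labels (with $1+m$ concentrated in its last cell), and the offsets $j$, $s$, $2j+m$, $s+j+m$ are arranged precisely so that these expressions land on canonical ancestral labels — essentially, the last labels of the left and right penultimate ancestors of the node containing label $n$. Once this identification is made, the two summands $C_T(\cdots)$ count the nonempty leaf cells sitting respectively in the left subtree and in the right subtree of the relevant ancestor, and these two collections partition the nonempty leaf cells contributing to $C_T(n)$, giving the equality.

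The main obstacle is the case analysis, which must run through each possible location of label $n$ (supernode; first and interior positions of a regular node; each of the first $j-1$ cells of a leaf; and the last, enlarged, leaf cell) and, in each, carry out the two index identifications and the cell-count bookkeeping. The delicate transitions are the boundary ones — when $n$ is the last label of a regular node just before descending into a leaf, the $(1+m)$-th label inside a leaf's final cell, or the first label in a fresh supernode — since these are the configurations where the arithmetic in the shifts has to exactly match the node capacities. This is also where the constraint $0 \le m \le j$ is used: for $m<0$ or $m>j$ the offsets would point off the tree or skip cells, giving a heuristic reason why $(\ref{eq:m})$ is undefined outside this range. Once the boundary cases are checked, the induction closes uniformly and yields $R(n)=C_T(n)$ for all $n$.
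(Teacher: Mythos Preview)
Your proposal contains a genuine conceptual gap in how you interpret the two summands. You write that $C_T(n-s-C_T(n-j))$ and $C_T(n-s-j-m-C_T(n-2j-m))$ count ``the nonempty leaf cells sitting respectively in the left subtree and in the right subtree of the relevant ancestor,'' with the inner arguments landing on ``the last labels of the left and right penultimate ancestors of the node containing label $n$.'' That is not what these terms count, and there is no single ``relevant ancestor'' whose two subtrees partition the leaf cells. The correct decomposition is \emph{global}: one shows that the first summand counts nonempty cells among \emph{all} left leaves of $T(n)$ (i.e., leaves that are left children of their penultimate parents, across the whole tree) and the second counts nonempty cells among all right leaves. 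The reduction from the second to the first is a simple shift (right leaves of $T(n)$ correspond bijectively to left leaves of $T(n-j-m)$), so the real content is proving that $C_T(n-s-C_T(n-j))$ equals the total number of nonempty left-leaf cells.

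The paper does this not by tracking where a single label lands, but by a \emph{pruning operation}: delete the $s$ labels in the first supernode, delete one label from every cell containing a label $\le n-j$ (removing exactly $C_T(n-j)$ labels), lift surviving leaf labels into their parents, and relabel. The resulting tree $T^*(n)$ is literally $T(n-s-C_T(n-j))$, and the key lemma shows that each penultimate node $P$ acquires, as a new leaf, exactly as many nonempty cells as its left child had in $T(n)$. Summing over all $P$ gives the claim. Your plan of locating $n$ and then asserting that $n-s-C_T(n-j)$ sits at a specific structural position will not work in general: that argument is not the last label of any fixed ancestor, and its position moves in a complicated way as $n$ varies. Even a corrected case-analysis induction would have to simultaneously track the positions of $n$, $n-j$, and $n-s-C_T(n-j)$, which is essentially redoing the pruning argument node by node without the organizing bijection. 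As written, the ``canonical ancestral label'' step is an unproven (and incorrect) assertion, so the induction does not close.
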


Notice that this combinatorial interpretation for the solution of $(\ref{eq:m})$ suggests why we cannot allow $m<0$ or $m>j$ in recursion $(\ref{eq:m})$. A tree with $m<0$ would have some leaf cells with either no labels ($m=-1$) or a negative number of labels ($m<-1$), while $m>j$ would lead to negative numbers of labels in the regular nodes.

By definition, $C_T(n)$ is the sum of the number of nonempty cells in the left leaves of $T(n)$ and the number of nonempty cells in the right leaves of $T(n)$. Observe that the number of nonempty cells in the right leaves of $T(n)$ equals the number of nonempty cells in the left leaves of $T(n-j-m)$: to see this, note that if $l$ is a label on a left leaf other than the first leaf in $T(n-j-m)$, then $l+m+j$ is a label on a right leaf in $T(n)$.\footnote{This doesn't hold for the first leaf since the labeling of the first supernode intervenes when $s>0$.} Conversely, if $r$ is a label on a right leaf other than the second leaf of $T(n)$, then $r-m-j$ is a label on a left leaf of $T(n-j-m)$. Since the initial conditions require that we are beyond the first two leaf nodes, which are full and thus have the same number of labels, the fact that this correspondence doesn't hold for the first leaf pair doesn't matter. Thus it follows that there is a one-to-one correspondence between nonempty cells in the left leaves of $T(n-j-m)$ and nonempty cells in the right leaves of $T(n)$.

Therefore, to prove Theorem \ref{thm:mmain} it is enough to show that for $n \geq 4j + 2m + 2s$, the number of nonempty cells in the left leaves of $T(n)$ is $C_T(n-s-C_T(n-j))$. We can then apply this result to the tree $T(n-j-m)$ and use the preceding correspondence to deduce that $C_T(n-s-j-m-C_T(n-2j-m))$ counts the number of nonempty cells in the right leaves of $T(n)$, provided that $n-j-m\geq 4j+2m+2s$, that is, $n\geq 5j+3m+2s$. Adding these cell counts together and combining with the given initial conditions yields the desired solution to the recursion.

In order to prove that for $n \geq 4j + 2m + 2s$ the number of nonempty cells in the left leaves of $T(n)$ is $C_T(n-s-C_T(n-j))$ we define the \textit{pruning operation} for $T(n)$. Note that when $n \geq 4j+2m+2s$ the left leaf of the second penultimate level node necessarily is full.

See Figures $\ref{fig:r131initialcorrection}, \ref{fig:r131deletion}, \ref{fig:r131lifting},$ and $\ref{fig:r131relabelling}$ for an illustration of the pruning process for our running example.

\begin{figure}[h]
 	\includegraphics[scale=.14]{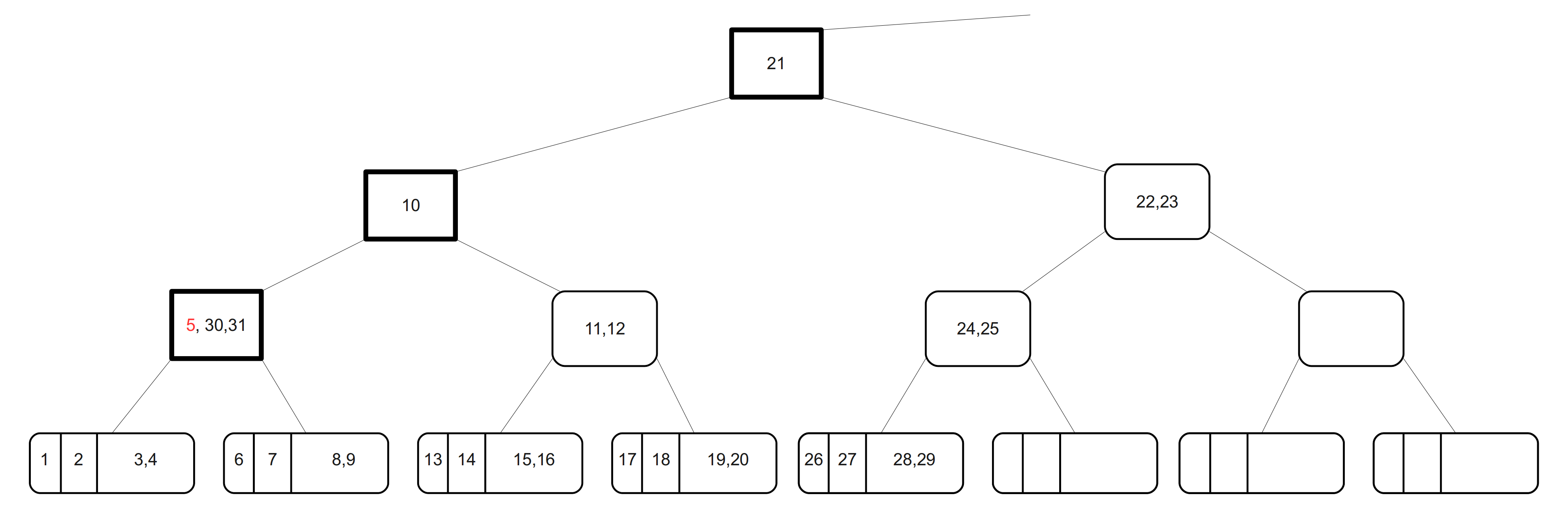}
 \caption{In the initial correction step for $T_{1,3,1}(31)$, the label 5 (in red) is removed from the first supernode and labels 30 and 31 are moved into the first supernode.}\label{fig:r131initialcorrection}
\end{figure}

\begin{figure}[h]
	\includegraphics[scale=.14]{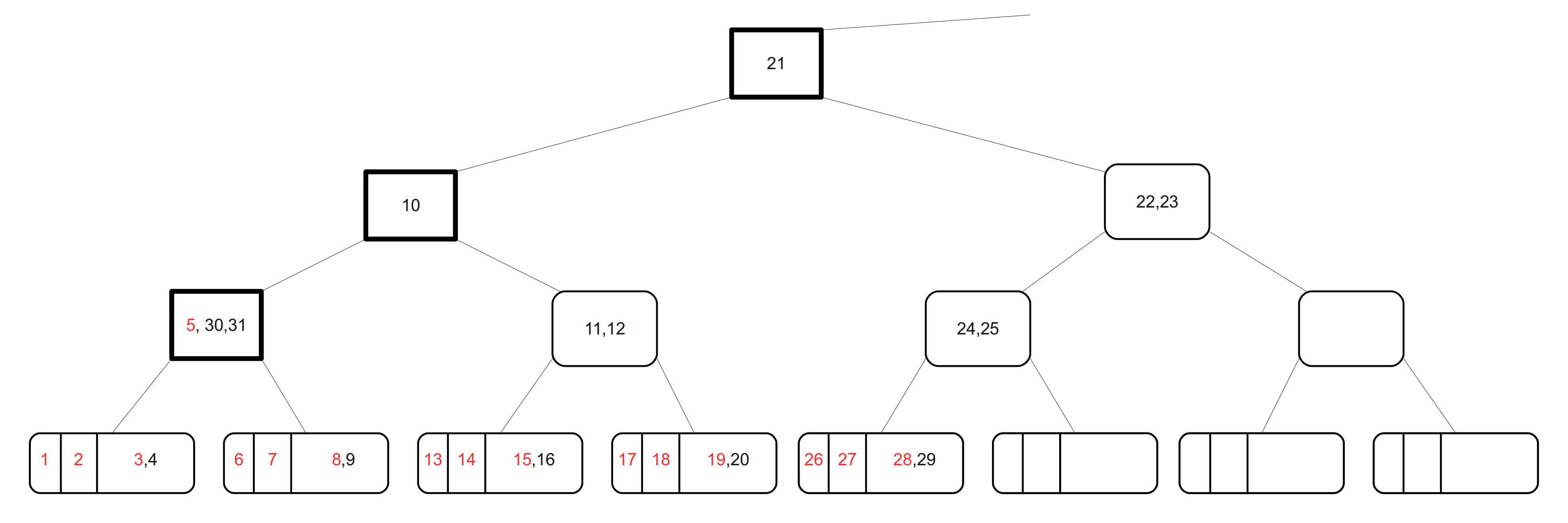}
\caption{In the deletion step for $T_{1,3,1}(31)$, one label (indicated in red) is deleted from every cell that has a label less than or equal to 28.}\label{fig:r131deletion}
\end{figure}

\begin{figure}[h]
	\includegraphics[scale=.14]{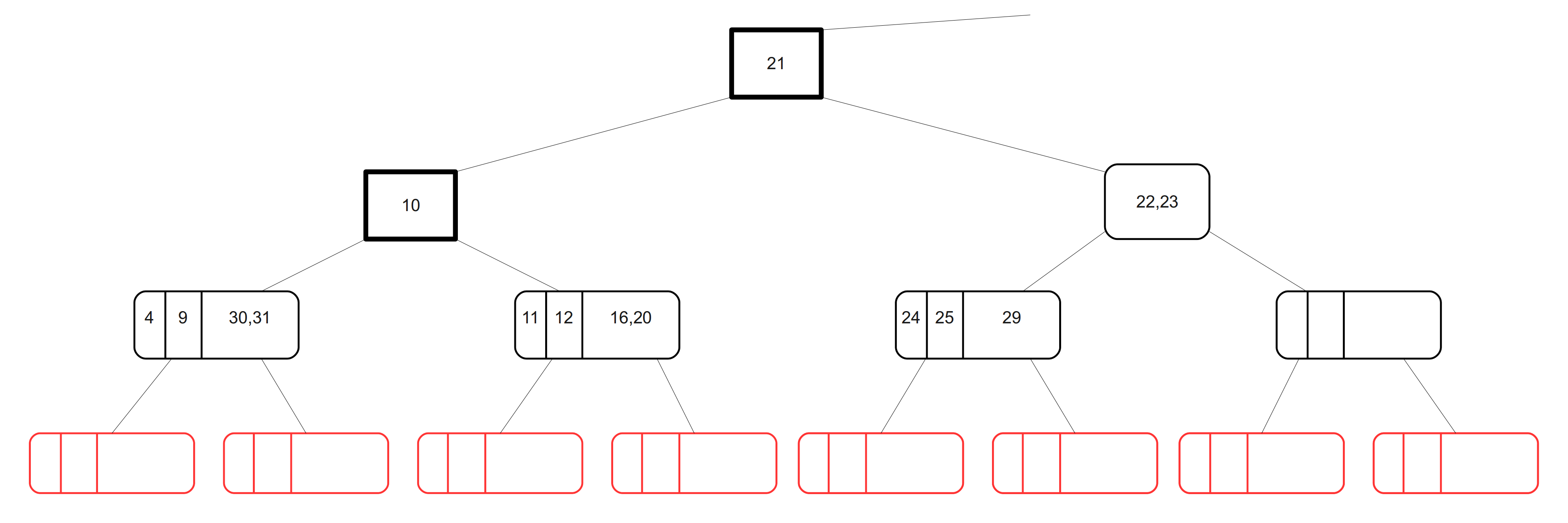}
  \caption{In the lifting step for $T_{1,3,1}(31)$, all the remaining leaf labels move up into their parent penultimate nodes, respectively. The now empty leaves are deleted and the penultimate nodes become the new leaves, with cell divisions introduced and labels inserted according to the rules.}\label{fig:r131lifting}
\end{figure}

\begin{figure}[h]
	\includegraphics[scale=.14]{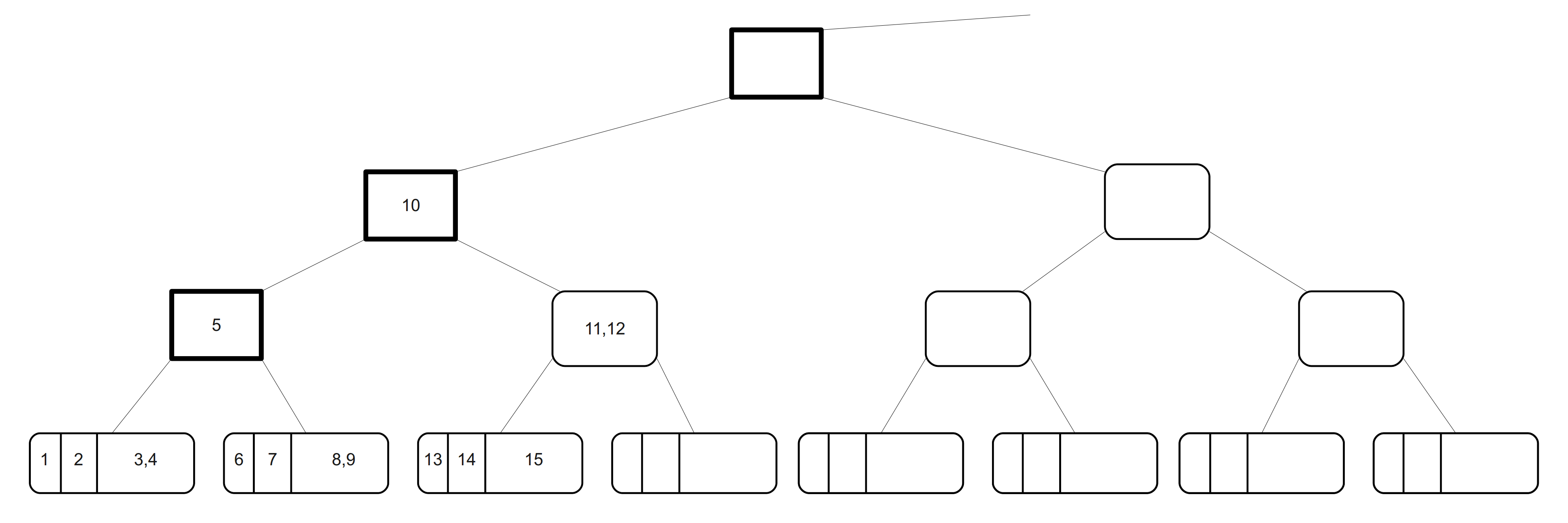}
  \caption{In the relabelling step for $T_{1,3,1}(31)$, the 15 remaining labels are replaced with 1 through 15, showing that $T^*_{1,3,1}(31)=T_{1,3,1}(15).$}\label{fig:r131relabelling}
\end{figure}

\begin{description}

\item[initial correction step]: Delete the $s$ labels in the first supernode (the leftmost penultimate node). Then take the $j-m$ largest labels $n-(j-m)+1,\ldots,n-1,n$ and move them into the now-empty first supernode.

\item[deletion step]: For every cell in $T(n)$, if it has at least one label less than or equal to $n-j$, delete the first label from that cell. This will delete precisely $C_T(n-j)$ labels, by definition of the leaf cell counting function $C_T$. At the end of the deletion step, we have deleted $s+C_T(n-j)$ labels in total.

\item[lifting step]: In all nonempty leaves of our tree (except possibly the last), we will have $m$ labels in the last cell (the last nonempty leaf might have less than $m$ labels in the last cell); this is because we deleted one label from each cell, but the last cell of each leaf had $1+m$ labels, and will thus have $m$ left. Move all remaining leaf labels into the parent of the leaf node they started in.

At this point of the pruning operation, all penultimate nodes (including the first supernode) other than (perhaps) the last nonempty penultimate node have exactly $j+m$ labels (the last nonempty penultimate node may have fewer labels). The first supernode had all of its original $s$ labels removed, $j-m$ labels added from the end, and $m$ labels added from each of its two children. All the other penultimate nodes (except possibly the last nonempty one) started with $j-m$ labels and gained $m$ from each of its two children.

As the last part of the lifting step, convert all the penultimate level nodes into leaves by dividing them into $j$ cells, with one label in each cell but the last, and $1+m$ labels in the last cell of each leaf. The last nonempty penultimate node may not have the $j+m$ labels needed to fill all of its cells, in which case simply fill as many cells as the number of labels on it allows. Finally, delete the bottom level nodes of the current tree (the original leaves), all of which are now empty. This process results in a new tree with the same skeleton as $T(n)$.

\item[relabelling step]: Renumber the labels of the new tree in preorder (so that 1 is the first label, 2 the second, and so on). It is readily seen that the new tree so labelled, which we denote by $T^*(n)$, is identical to $T(n-s-C_T(n-j))$, since it has the same skeleton structure and has $n-s-C_T(n-j)$ labels.

\end{description}

For convenience, we define $C_{T^*}(n)$ to be the number of nonempty leaf cells of $T^*(n)$, that is, we define $C_{T^*}(n)=C_T(n-s-C_T(n-j))$.

We can think of every node in $T^*(n)$ as being part of $T(n)$, that is, we can identify each node in $T^*(n)$ with the node that it was in $T(n)$. For example, we identify the penultimate node with labels 24 and 25 in Figure \ref{fig:r131}  with the leaf node containing labels 13, 14 and 15 in Figure $\ref{fig:r131relabelling}$.

We proceed with a lemma that bijectively relates nonempty cells in the left leaves of the original tree $T(n)$ with nonempty cells in the leaves of the pruned tree $T^*(n)$. In this way we prove that $C_T(n-s-C_T(n-j))$ counts the number of nonempty cells in the left leaves of $T(n)$, and hence
completes the proof of Theorem \ref{thm:mmain}.

\begin{lemma}\label{lem:mstarcount}
Suppose that $P$ is a penultimate node of $T(n)$ (and thus a leaf node in $T^*(n)$), and $n\geq 4j+2m+2s$. Then in $T^*(n)$, the number of nonempty cells of $P$ is equal to the number of nonempty cells of its left child in $T(n)$.
\end{lemma}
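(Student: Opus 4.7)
The plan is to verify the lemma by a direct count: for each penultimate node $P$ of $T(n)$, I compute the number of labels that end up on $P$ in $T^*(n)$ via the pruning steps, and compare this count to the number of nonempty cells of $L$ in $T(n)$. A key preliminary observation is that for any leaf filled by preorder with $k$ labels, the number of nonempty cells is exactly $\min(k, j)$, since cells $1, \ldots, j-1$ each receive one label before further labels (up to $1+m$) accumulate in the last cell. Writing $N'_P$ for the number of labels on $P$ in $T^*(n)$ and $N_P, N_L, N_R$ for the number of labels on $P$, $L$, $R$ in $T(n)$, the lemma thus reduces to the identity $\min(N'_P, j) = \min(N_L, j)$.

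Assume first that $P$ is not the first supernode, and that the initial correction step (which relocates the $j-m$ tail labels $n-(j-m)+1, \ldots, n$ into the first supernode) does not affect any of $P$, $L$, $R$. Then the labels ending up on $P$ in $T^*(n)$ comprise $P$'s own labels in $T(n)$ (untouched by the deletion step, since deletion acts only on leaf cells), plus the labels surviving on $L$ after deletion and lifted to $P$, and similarly those surviving on $R$. Since deletion removes exactly one label from each leaf cell containing at least one label $\leq n-j$, the labels surviving on a leaf $X$ total $N_X - c_X$, where $c_X$ denotes the number of such cells. Hence
\[
N'_P = N_P + (N_L - c_L) + (N_R - c_R).
\]
In general, this formula must be adjusted for the initial correction's removal of tail labels from $P$, $L$, or $R$.

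The proof now proceeds by case analysis on the preorder-filling status of $L$ and $R$ and the location of the tail labels. In the cleanest case, $L$ is full with all its labels $\leq n-j$ (equivalently, $R$ has at least $j$ labels), so $c_L = j$, yielding $L$'s contribution $m$ and $N'_P \geq (j-m) + m = j$, matching $\min(N_L, j) = j$. When $L$ is full but some of its labels exceed $n-j$, the tail lies on $R$ and possibly back on $L$; sub-cases based on whether $N_R \geq j-m$ or $N_R < j-m$ again yield $N'_P \geq j$ after tracking the effects of the initial correction and deletion. When $L$ is only partially filled, $R$ must be empty and the tail lies on $L$ (and possibly $P$); careful bookkeeping in sub-cases based on $N_L$ relative to $j-m$ and $j$ yields $\min(N'_P, j) = \min(N_L, j)$. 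Finally, when $L$ is empty, any labels on $P$ themselves lie in the tail and are moved away by the initial correction, again giving $N'_P = 0 = \min(N_L, j)$. The first-supernode case is handled analogously, with the $j-m$ tail labels deposited by the initial correction playing the role of $P$'s original labels. The main obstacle is the detailed bookkeeping in the sub-cases where the tail intrudes into $L$ or $P$; the hypothesis $n \geq 4j + 2m + 2s$ is what ensures the first two penultimate nodes are fully labelled, so the tail lies strictly beyond them and the boundary anomalies at the top of the tree do not appear.
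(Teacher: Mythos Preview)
Your direct-counting approach is sound and will go through, but it differs from the paper's argument in one key place. You and the paper agree on the easy cases: when $L$ is empty (your case 4, the paper's Case 1) and when $L$ carries at most $j$ labels (your case 3, the paper's Case 2), both proofs do the same explicit bookkeeping and arrive at $N'_P = N_L$.

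The divergence is in the ``$L$ has all $j$ cells nonempty'' regime (your cases 1 and 2, the paper's Case 3). You propose to push the formula $N'_P = N_P + (N_L - c_L) + (N_R - c_R)$ through several sub-cases based on where the $(j-m)$-label tail lands and how much of $R$ is filled, each time checking $N'_P \geq j$ (and implicitly $N'_P \leq j+m$). This works---I spot-checked the sub-cases and the arithmetic closes---but it is genuinely laborious, and your write-up only sketches it. The paper instead sidesteps all of this: it observes that the function $f(n) = n - s - C_T(n-j)$ is monotone nondecreasing (an easy one-line check, since $C_T$ is slow), and uses this to reduce Case 3 to the already-proved Case 2 with $d = j$. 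Concretely, if $x_j$ is the $j$th label on $L$, then $T^*(x_j)$ already has $P$ full by Case 2, and monotonicity gives $T^*(n) \supseteq T^*(x_j)$, so $P$ stays full in $T^*(n)$.

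The trade-off: your approach is self-contained and uniform (one formula, many cases), while the paper's monotonicity trick collapses the hardest case to a two-line argument at the cost of introducing an auxiliary observation about $f$. If you want to keep your direct route, you should actually write out the sub-case computations rather than assert that they ``again yield $N'_P \geq j$''; otherwise the monotonicity shortcut is cleaner.
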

\begin{proof}
We begin with the case where $P$ is the first supernode. Note that this is the part of the proof where we rely on the assumption that $n\geq 4j+2m+2s$: the label $4j+2m+2s$ is the last label on the third leaf of $T(n)$, which ensures that the left leaf child of $P$ is full in $T(n)$. Also, we have $n-j\geq 3j+2m+2s \geq 2j+2m+s$ which is the last label on the right child of $P$, so all of the cells of the two children of $P$ will have 1 label removed during the deletion step. Furthermore, note that the second penultimate node (the one just to the right of $P$) has its full complement of $j-m$ labels in $T(n)$ since its last label is $3j+m+2s \leq 4j+2m+2s$. Thus, there are at least $j-m$ labels on the tree after the children of $P$, so the $j-m$ labels moved from the end of the tree into $P$ during the initial correction step will not come from the children of $P$. This means that there remain in place $2m$ labels on the children of $P$ after the deletion step in the pruning process. Therefore in the pruning process the node $P$ will receive the $j-m$ labels from the end of the tree, plus $2m$ labels from its children, making $P$ full in the pruned tree $T^*(n)$ just like its left child is full in $T(n)$. This establishes the required result in this special case.

We now assume that $P$ is not the first supernode. We require several cases:

\textbf{Case 1:} The label $n$ is on a node before (with respect to preorder) the left child of $P$ in $T(n)$, that is, the left child of $P$ has no labels in $T(n)$. We want to show that $P$ has no labels in $T^*(n)$. In this case, observe that $P$ has at most $j-m$ labels before the pruning operation, and during the initial correction step, the final $j-m$ labels in $T(n)$ are moved into the first supernode. This means that any labels in $P$ in $T(n)$ are removed during the pruning, so $P$ will be empty in $T^*(n)$.

\textbf{Case 2:} The label $n$ is one of the first $j$ labels on the left child of $P$ in $T(n)$. Thus the left child of $P$ has $d$ total labels in $T(n)$, where $0<d\leq j$ (observe that there are no labels in the right leaf child of $P$). This means that between 1 and $j$ of the cells of the left child of $P$ have one label each. We want to show that $P$ will have $d$ total labels in $T^*(n)$. Since all of the labels on the left child of $P$ are larger than $n-j$, none of them will be deleted during the deletion step. During the initial correction step, we will move the last $j-m$ labels into the first supernode, and then during the lifting step, the labels remaining on the left child of $P$ (if any) will be moved up into $P$. Since $P$ had $j-m$ labels before the pruning operation, and its children had $d$ labels in total, and (the largest) $j-m$ labels were removed during the initial correction step, there will be $d$ labels left on $P$ after the lifting step of the pruning operation, as desired.

\textbf{Case 3:} The last remaining case is when the $j^{th}$ label on the left child of $P$ in $T(n)$ is smaller than $n$. That is, the left child of $P$ in $T(n)$ has all $j$ of its cells nonempty and $n$ is not the first entry in the last cell. We will show that in $T^*(n)$, the node $P$ also has all of its cells nonempty. To do so we make use of the result we have just proved for Case 2 with $d=j$.

Let $x_j$ be the $j^{th}$ label on the left child of $P$ in $T(n)$. By assumption, we have $x_j<n$. By Case 2, we know that $T^*(x_j)$ has all of the cells of $P$ nonempty. As discussed previously, $T^*(x_j)=T(x_j-s-C_T(x_j-j))$ and $T^*(n)=T(n-s-C_T(n-j))$. If we can prove that $x_j-s-C_T(x_j-j) \leq n-s-C_T(n-j)$, then we will have shown that $T^*(n)$ has at least as many labels as $T^*(x_j)$. Thus $P$ will have at least as many labels in $T^*(n)$ as it does in $T^*(x_j)$, meaning $P$ will have no nonempty cells in $T^*(n)$.

To show that $x_j-s-C_T(x_j-j) \leq n-s-C_T(n-j)$ we will prove that the function $f(n) = n-s-C_T(n-j)$ is monotone nondecreasing. Note that $f(n+1)=n+1-s-C_T(n+1-j)$. Since $C_T$ counts nonempty leaf cells, either $C_T(n+1-j)=C_T(n-j)$ (if $n+1-j$ is not the first label of a leaf cell), or $C_T(n+1-j)=C_T(n-j)+1$ (if $n+1-j$ is the first label of a leaf cell). In the former case, we have $n+1-s-C_T(n+1-j)=n-s-C_T(n-j)+1$, and in the latter case, we have $n+1-s-C_T(n+1-j)=n-s-C_T(n-j)$. Either way, this establishes that $f(n+1) \geq f(n)$, proving the desired inequality. This completes the proof of Case 3 and the lemma, so Theorem \ref{thm:mmain} is established.
\end{proof}

As we pointed out earlier, the introduction of the simultaneous parameter $m$ in $(\ref{eq:0jj2j})$ defines a new recursion family that unifies the results about $(\ref{eq:0jj2j})$ and $(\ref{eq:0j2j3j})$ proved in \cite{Rpaper}: when $m=0$ ($\ref{eq:m}$) is identical to $(\ref{eq:0jj2j})$ while when $m=j$ we have $(\ref{eq:0j2j3j})$. Thus, we have solved these two previously known recursions and also all of the intermediate recursions lying between them. In addition, the parameter $m$ plays a key role in the structure of the resulting tree used in solving ($\ref{eq:m}$); it is also noteworthy that through the generality that $m$ provides the derivation of the solution to ($\ref{eq:m}$) is even easier than the solutions to $(\ref{eq:0jj2j})$ and $(\ref{eq:0j2j3j})$ in \cite{Rpaper}.

We conclude this section by deriving the frequency sequence of the solution that we have just determined to ($\ref{eq:m}$).

\begin{theorem}\label{thm:mfreq}
The solution $C_T$ to the nested recursion $(\ref{eq:m})$ has frequency sequence

\begin{displaymath}
\phi_{C_T}(v) = \left \{
\begin{array}{lr}
1& \text{if}\; j \nmid v\\
(j-m) \nu_2 (v/j) + m + 1 + s \mathbf{1}_{[\frac{v}{j} \; \text{is a power of 2}]}   & \text{otherwise} \end{array}
\right.
\end{displaymath}

where $\nu_2(x)$ is the 2-adic valuation of $x$ and $\mathbf{1}_{[E]}$ is the indicator function of the set $E$.
\end{theorem}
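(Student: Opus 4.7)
My plan is to compute $\phi_{C_T}(v)$ directly from the structure of the tree $T = T_{s,j,m}$ used in the proof of Theorem \ref{thm:mmain}. Since $C_T(n)$ increases by exactly $1$ precisely when a previously empty leaf cell receives its first label, $\phi_{C_T}(v)$ equals the number of labels placed in $T$, in preorder, starting from the one that first makes the $v$-th leaf cell nonempty and ending just before the one that first makes the $(v+1)$-th leaf cell nonempty. Writing $v = (q-1)j + i$ with $1 \leq i \leq j$, the $v$-th new cell is the $i$-th cell of the $q$-th leaf $L_q$ in preorder. If $i < j$ (i.e.\ $j \nmid v$), that cell receives exactly one label and the next label in preorder opens cell $i+1$ of the same leaf, giving $\phi_{C_T}(v) = 1$; this handles the first case of the formula. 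When $v = qj$, the $v$-th cell is the \emph{last} cell of $L_q$, which holds $1+m$ labels, and after these preorder marches through all non-leaf nodes encountered before reaching $L_{q+1}$. Writing $I(q)$ for the total number of labels on these intermediate super- or regular nodes, we obtain $\phi_{C_T}(v) = 1 + m + I(q)$, so it remains to prove $I(q) = (j-m)\nu_2(q) + s \cdot \mathbf{1}_{[q \text{ is a power of } 2]}$.

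To compute $I(q)$ I exploit the fact that for $r \geq 1$ the leaves $L_{2^{r-1}+1}, \ldots, L_{2^r}$ are precisely the leaves of the right subtree of the supernode $S_r$, which is a complete binary tree of depth $r-1$ whose internal nodes are all regular (carrying $j-m$ labels each). If $q = 2^r$ is a power of $2$, then $L_q$ is the rightmost leaf under $S_r$ and $L_{q+1}$ is the leftmost leaf under $S_{r+1}$, so preorder crosses the supernode $S_{r+1}$ ($s$ labels) and then descends the leftmost path of its right subtree, passing exactly $r$ regular nodes before reaching $L_{q+1}$. This gives $I(2^r) = s + r(j-m) = s + (j-m)\nu_2(q)$. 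If instead $q$ is not a power of $2$, then $L_q$ and $L_{q+1}$ lie in the right subtree of the same $S_r$, so no supernode intervenes; writing $q' = q - 2^{r-1}$ for the local index, a standard fact about preorder on complete binary trees asserts that the number of internal nodes visited strictly between two consecutive leaves with left-indices $q'$ and $q'+1$ equals $\nu_2(q')$, and $\nu_2(q') = \nu_2(q)$ since $q' < 2^{r-1}$. Hence $I(q) = (j-m)\nu_2(q)$, and the two subcases combine to give the claimed formula for $I(q)$.

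Substituting into $\phi_{C_T}(v) = 1 + m + I(v/j)$ reproduces the second case of the theorem exactly. The only step beyond routine bookkeeping is the standard binary-tree count used in the non-power-of-$2$ subcase, which I would establish either by a short induction on the subtree depth, or equivalently by observing that $q'-1$ and $q'$ agree on all but their lowest $\nu_2(q')+1$ bits, so that the lowest common ancestor of the two leaves sits at depth $r-2-\nu_2(q')$ from the subtree root and preorder visits precisely the $\nu_2(q')$ interior nodes on the descent from that ancestor's right child down to the next leaf.
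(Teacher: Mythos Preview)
Your approach is essentially identical to the paper's: both compute $\phi_{C_T}(v)$ by locating the $v$-th cell within leaf $L_q$ (where $v=(q-1)j+i$), dispose of the $i<j$ case immediately, and for $i=j$ reduce to counting the non-leaf nodes between $L_q$ and $L_{q+1}$, obtaining $\nu_2(q)$ regular nodes plus a supernode exactly when $q$ is a power of~$2$. The only cosmetic difference is that you localize to a subtree and use the index shift $q'=q-2^{r-1}$ (noting $\nu_2(q')=\nu_2(q)$), whereas the paper instead identifies the lowest common ancestor $N$ of $L_q$ and $L_{q+1}$ directly; one minor caution is that in this tree the supernode $S_{r+1}$ is itself the root of the complete binary subtree containing $L_{2^r+1},\ldots,L_{2^{r+1}}$, so your phrase ``right subtree of $S_{r+1}$'' should simply read ``subtree rooted at $S_{r+1}$''---this does not affect the count.
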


\begin{proof}

We begin by counting the number of regular nodes between the $h^{th}$ and $(h+1)^{st}$ leaves in $T$. We consider two cases.

Suppose $h$ is a power of $2$, say $h=2^b$. Then observe that the $h^{th}$ leaf comes right before the $(b+1)^{st}$ supernode. By the tree construction this supernode is the root of a complete binary subtree of height $(b+1)$, so it is followed in turn in preorder by $b$ regular nodes (the ``leftmost" nodes of the complete binary subtree rooted at the $(b+1)^{st}$ supernode), and then by the $(h+1)^{st}$ leaf. Thus, the number of regular nodes between $h^{th}$ and $(h+1)^{st}$ leaves is $\nu_2(h)$.

Assume that $h$ is not a power of $2$, say $h = a 2^b$ for some odd integer $a > 1$. Consider the node $N$ in $T$ such that the $h^{th}$ leaf is the $(2^{b+1} - 1)^{st}$ node in preorder following $N$. For example, in Figure \ref{fig:r131initialcorrection}, for $h = 6$ the node $N$ contains the labels 22 and 23. Recall that $(2^{b+1} - 1)$ is the number of nodes in a complete rooted binary tree of height $b$. It follows that the $h^{th}$ leaf is the rightmost node in the left subtree of $N$ and the $(h+1)^{st}$ leaf is the leftmost node in the right subtree of $N$ and there are $\nu_2(h) = b$ regular nodes in preorder between $h^{th}$ and $(h+1)^{st}$ leaves.\footnote{An alternate approach to counting the number of nodes between the $h^{th}$ and $(h+1)^{st}$ leaves in $T$ is as follows (see \cite{Isgur}): in \cite{JR} it is shown that the Conolly sequence is the label count on a binary tree with empty supernodes. From this it follows that the frequency with which $h$ occurs in the Conolly sequence is precisely the number of regular nodes between the $h^{th}$ and $(h+1)^{st}$ leaves, plus one. Further, it is also shown in \cite{JR} that the frequency of the Conolly sequence is $\nu_2(v) + 1$. Therefore, the number of regular nodes between the $h^{th}$ and $(h+1)^{st}$ leaves of a binary tree is $\nu_2(h)$.}

Now we proceed with the proof of the theorem. If $v$ is not a multiple of $j$, then the $v^{th}$ nonempty cell is not the last cell on a leaf. Since cells other than the last cell on a leaf have one label and are followed by another cell, we have $\phi_{C_T}(v)=1$ in this case. If $v$ is a multiple of $j$, then the $v^{th}$ nonempty cell is the last cell on the $(v/j)^{th}$ leaf. Thus, the value $v$ is assumed on all $1+m$ labels in that cell, plus the $\nu_2(v/j)$ regular nodes following it (each with $j-m$ labels), plus another $s$ labels on a supernode if $v/j$ is a power of $2$ (and hence the $(v/j)^{th}$ leaf comes right before a supernode). This establishes the stated values for the frequency sequence for the solution to recursion $(\ref{eq:m})$.

\end{proof}

The above argument is a general technique for deriving the frequency sequence for a solution related to labelled trees of this type. This proof technique does not depend on the labelling scheme, but only on the skeleton of the tree, so it can be adapted to other situations such as the one we will discuss in the following section (see Section \ref{sec3:frequency}). 

\end{section}


\begin{section}{Simultaneous parameters in higher order nested recursions} \label{sec:orderp}
In this section we apply our tree-based approach to solve higher order recursions containing simultaneous parameters. Our starting point is the 2-ary, order $p$ recursion (\ref{eq:alphabeta1}) that first appears in \cite{ConollyLike}. As is discussed there, this recursion can be viewed as an order $p$ extension of both the order 1 Conolly recursion ($\ref{eq:0112}$) (take $\alpha=0$ and $\beta=1$) and the $H$ recursion ($\ref{eq:0123}$) (take $\alpha=2$ and $\beta=0$). Here we show how to construct and solve a natural extension to this recursion that contains simultaneous parameters $s,j$ and $m$ that each play a role analogous to the one they played in ($\ref{eq:m}$).  In this way we derive a tree-based interpretation for the solution to a higher order generalization of (\ref{eq:m}). We explore similarities in the behaviour of this solution and those for ($\ref{eq:m}$) and (\ref{eq:alphabeta1}), respectively.

In a manner formally similar to the approach taken in \cite{Rpaper} for the simultaneous parameters $s$ and $j$ and in the preceding section for the parameter $m$, we introduce what we will see are simultaneous parameters $s,j$ and $\bar{m}$ into (\ref{eq:alphabeta1}) in what appears to be a very natural way, namely:
\begin{equation} \label{rec2}
R(n) = R(n- s - \sum_{i=1}^p R(n-(2i-1)j)) + R(n-s-(\alpha+\beta)j -\bar{m} - \sum_{i=1}^p R(n-(\alpha+\beta)j-\bar{m}-(2i-1)j)),
\end{equation}
with, as in (\ref{eq:alphabeta1}), $\alpha$ even,  $\beta \geq 0$, $\alpha + \beta \geq 1$, $p = \alpha/2 + \beta \geq 1$, $s \geq 0$, $j \geq 1$ and some range of values for $\bar{m}$ that we discuss below.

Some modest experimentation with particular values for the parameters in ($\ref{rec2}$) is sufficient to demonstrate that this parametrization in terms of $(\alpha, \beta, \bar{m})$ is not one to one, that is, different choices of parameters $(\alpha, \beta, \bar{m})$ can generate the same recursion and through it the same tree and solution sequence. For example, $s=0, j=3, \alpha=2, \beta=1, \bar{m}=1$ and $s=0, j=3, \alpha=-2, \beta=3, \bar{m}=7$ generate the same order $p=2$ recursion and associated tree.

This duplication occurs because $\alpha, \beta$ and $\bar{m}$ always appear together in ($\ref{rec2}$). To eliminate this duplication we combine them into a single new parameter $m = (\alpha+\beta -1)j + \bar{m}$. As a result, ($\ref{rec2}$) becomes
\begin{equation} \label{rec1}
R(n) = R(n - s - \sum_{i=1}^p R(n-(2i-1)j)) + R(n-s-j-m- \sum_{i=1}^p R(n-j-m-(2i-1)j)).
\end{equation}
Note that (\ref{rec1}) reduces to (\ref{eq:m}) when $p=1$, so the results of this section generalize those of  Section \ref{sec:order2}.

Computational evidence to date with (\ref{rec1}) suggests that whenever this recursion generates an infinite solution sequence (for some set of initial conditions) then $0 \leq m \leq (2p-1)j$. For this reason we restrict $m$ to this range.\footnote{In fact, this is the range of $m$ for which our tree-based proof below holds. This suggests a heuristic reason for our inability to locate any solutions for the recursion with $m$ outside this range.}

\subsection{Construction of the tree and statement of the main theorem} \label{sec3:mainresult}

The skeleton of the tree $T = T_{s,j,m,p}$ that we use here is the same infinite binary tree as in Section \ref{sec:order2}, and we adopt the same terminology and a similar labelling scheme. For $n \geq 1$ let $T(n)$ denote $T$ with the first $n$ labels inserted in preorder according to the following rules: the supernodes of $T$ contain $s$ labels each, every leaf node of $T$
contains $j$ cells with 1 label in each of the first $j-1$ cells and $1+m$ labels in the last cell, and all other regular nodes contain $x := (2p-1)j - m$ labels each. Continue in this way until we have placed $n$ labels. Figure \ref{fig:T63} shows $T(63)$ for our running example in this section with $s=0,j=3$ and $m=p=2$. Note that in this case $x=7$.

\begin{figure}[h]
\includegraphics[scale=.14]{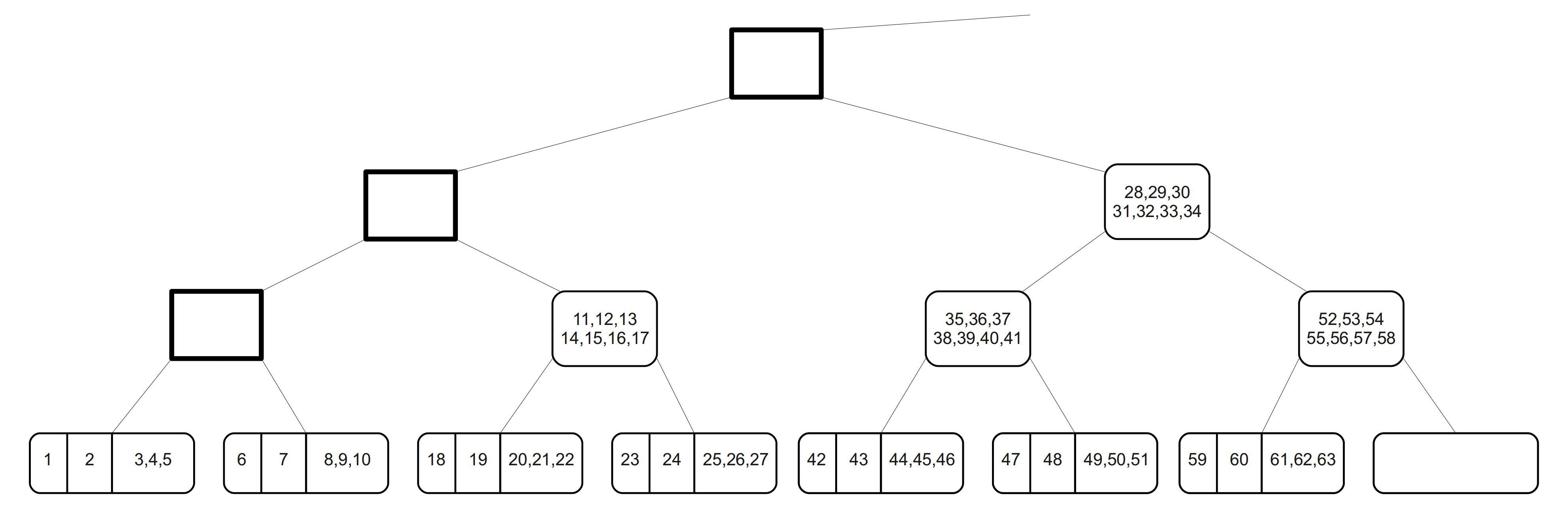}
\caption{The labeled tree $T(63)$ for $(s,j,m,p) = (0,3,2,2); C_T(63)=21$.}\label{fig:T63}	
\end{figure}

Define the leaf cell counting function $C_T(n)$ to be the number of nonempty cells in $T(n)$.
The main result of this section is that $C_T(n)$ satisfies (\ref{rec1}) with appropriate initial conditions that follow the tree $T$.

\begin{theorem} \label{claim1} Suppose that the recursion $(\ref{rec1})$ has initial conditions $R(n)=C_T(n)$ for
$n \leq 4(j+m) + x + 2s$, that is, the initial conditions follow the tree until the right leaf of the second penultimate level node. Then for all $n$, $R(n)=C_T(n)$.
\end{theorem}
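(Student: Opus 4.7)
The plan mirrors the proof of Theorem~\ref{thm:mmain} in Section~\ref{sec:order2}. First I would establish the right-to-left leaf correspondence used there: the map $\ell \mapsto \ell + j + m$ bijects nonempty cells in the left leaves of $T(n - j - m)$ with nonempty cells in the right leaves of $T(n)$, provided $n$ is large enough so the exceptional first-leaf labelling does not interfere. Combined with the shift structure of $(\ref{rec1})$, this reduces the theorem to showing that for $n \geq 4(j+m) + x + 2s$ the number of nonempty left-leaf cells of $T(n)$ equals $C_T(n - s - \sum_{i=1}^p C_T(n - (2i-1)j))$.

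To prove this I would introduce a pruning operation on $T(n)$ extending the construction in Section~\ref{sec:order2}. The initial correction step is unchanged: remove the $s$ labels from the first supernode and move the last $x = (2p-1)j - m$ labels into it. The single deletion step is replaced by $p$ successive deletion rounds arranged so that round $i$ removes exactly $C_T(n - (2i-1)j)$ labels and so that, after the lifting and relabelling steps, each (former) penultimate node carries exactly $j + m$ labels distributed into $j$ cells in the prescribed pattern. The pruned tree $T^*(n)$ is thereby identified with $T(n - s - \sum_{i=1}^p C_T(n - (2i-1)j))$. The remaining task is an analog of Lemma~\ref{lem:mstarcount}: for every penultimate node $P$ of $T(n)$, the number of nonempty cells of $P$ in $T^*(n)$ equals the number of nonempty cells of the left child of $P$ in $T(n)$. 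The same four-way case split applies (first supernode; empty, partial, or full left child) and the first three cases carry over by direct bookkeeping from the Section~\ref{sec:order2} argument.

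The chief obstacle is the full-left-child case. In Section~\ref{sec:order2} this was handled by a one-line monotonicity argument for $f(n) = n - s - C_T(n - j)$. The analogous function $f(n) = n - s - \sum_{i=1}^p C_T(n - (2i-1)j)$ is \emph{not} monotone in general, since sibling leaves in $T$ have first labels differing by $j + m$ while consecutive shifts $(2i-1)j$ differ by $2j$, so for many values of $m$ two of the $p$ arguments $n + 1 - (2i-1)j$ can simultaneously be first entries of leaf cells. What is actually needed, however, is only the weaker inequality $f(x_j) \leq f(n)$ for the specific value $x_j$ arising in Case~3 (the first label of the last cell of the full left child of $P$). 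I would prove this directly by counting cell first entries inside each of the $p$ shifted intervals $(x_j - (2i-1)j, n - (2i-1)j]$, using the structural fact that non-sibling leaves are separated by gap at least $j + m + x = 2pj$ to bound $\sum_{i=1}^p |\mathcal{L} \cap (x_j - (2i-1)j, n - (2i-1)j]|$ by $n - x_j$, where $\mathcal{L}$ denotes the set of all leaf cell first labels. Combined with the routine case verifications, this closes the proof of the lemma and hence of Theorem~\ref{claim1}.
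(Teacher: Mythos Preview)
Your reduction to counting left-leaf cells via the shift $n\mapsto n-j-m$ and your idea of a $p$-round pruning operation are both correct and match the paper's strategy. However, there is a genuine gap: you never establish the \emph{upper bound} on the number of labels in the last nonempty penultimate node $P$ after pruning. You write that the pruning is ``arranged so that $\ldots$ each (former) penultimate node carries exactly $j+m$ labels'' and then immediately identify $T^*(n)$ with $T(f(n))$, but this identification is precisely what must be proved (it is the paper's Lemma~\ref{claim3}), and it is not automatic. The difficulty is that for the last one or two nonempty penultimate nodes the deletion rounds may remove strictly fewer than $2pj$ labels from the pair of children, so after lifting and end-correction $P$ could \emph{a priori} carry more than $j+m$ labels; if that happened, $T^*(n)$ would violate the labelling rules and your cell-count bijection would not yield $C_T(f(n))$.

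Your monotonicity argument for Case~3 is aimed solely at the lower bound (at least $j$ nonempty cells in $P$), and even granting the inequality $f(x_j)\le f(n)$, you cannot conclude that $P$ has at least $j$ labels in $T^*(n)$ by comparison with $T^*(x_j)$ unless you already know $T^*(n)=T(f(n))$---so the argument is circular as stated. The paper avoids this by abandoning monotonicity entirely and instead proving both the lower bound $j$ and the upper bound $j+m$ on the label count of $P$ \emph{simultaneously}, via a direct case analysis on the position of the label $n$ relative to the children $L,R$ of $P$ (seven subcases in all), driven by a technical counting lemma (Lemma~\ref{lem1}) that gives the exact number of labels removed from a leaf with $\mu$ labels at or after it. This is where essentially all of the work in the proof lies, and your sketch does not supply it.
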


Before we prove Theorem \ref{claim1} we examine the special endpoint cases $m=0$ and $m=(2p-1)j$ associated with the range of $m$. When $m=0$ then (\ref{rec1}) is an order $p$ analogue of (\ref{eq:0jj2j}), while if $m=(2p-1)j$ then (\ref{rec1}) is an order $p$ analogue of (\ref{eq:0j2j3j}). In particular, when $s=0$ and $j=1$ the first recursion with $m=0$ is an order $p$ analogue to the Conolly recursion ($\ref{eq:0112}$) while the second recursion with $m=(2p-1)j$ is an order $p$ analogue to the $H$ recursion ($\ref{eq:0123}$) with solution $\lceil n/2 \rceil$ (in both cases, the required initial conditions are generated from the associated tree).

Even further, as in the case of ($\ref{eq:0123}$), it will be evident from what follows that the solution to (\ref{rec1}) with $m=(2p-1)j$, $s=0$ and $j=1$ is a ceiling function, in this case $\lceil n/2p \rceil$. While a more general version of this result appears in  \cite{ConollyLike}, we provide here the first tree-based derivation of a ceiling function solution for an order $p$ nested recursion.\footnote{See \cite{ConollyLike}, Theorem 5.2, where a very different methodology is used to characterize all recursions of the form $R(n) = R( n-s-\sum_{i=1}^p R(n-a_i) ) + R(n-t-\sum_{i=1}^p R(n-b_i))$ with solution $\lceil n/2p \rceil$. We believe that our proof will work for any of the recursions stated in Theorem 5.2 of \cite{ConollyLike} provided that
$s=0, a_i < 2p, t = 2p$ and $b_i = a_i + 2p$.}

\subsection{Strategy of the proof : the pruning operation} \label{sec3:pruning}

We will follow a similar approach to that adopted in Section \ref{sec:order2}. To prove Theorem \ref{claim1}, first we denote by $C_{T,L}(n)$ and $C_{T,R}(n)$ the number of nonempty cells in $T(n)$ that are on the left and right leaves, respectively. By definition
$$ C_T(n) = C_{T,L}(n) + C_{T,R}(n).$$

Since there are $j+m$ labels in total in a full leaf, there is a natural bijection between the nonempty cells of $T(n)$ that are on right leaves and the nonempty cells of $T(n-j-m)$ that are on left leaves. Thus,
$$ C_{T,R}(n) = C_{T,L}(n-j-m).$$

Hence, to prove Theorem \ref{claim1} it suffices to show the following result:
\begin{lemma} \label{claim2}
For $n > 3(j+m) + x + 2s$, we have that $$C_{T,L}(n) = C_T(n-s-\sum_{i=1}^p C_T(n-(2i-1)j))\,.$$\end{lemma}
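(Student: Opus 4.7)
The plan is to extend the pruning-operation strategy of Section \ref{sec:order2} to order $p$. I would define a pruning on $T(n)$ whose relabelling is $T(N)$ for $N := n - s - \sum_{i=1}^{p} C_T(n-(2i-1)j)$, and then establish, in the spirit of Lemma \ref{lem:mstarcount}, a correspondence between the nonempty cells of $T^{*}(n)$ (which become the new leaves) and the nonempty cells in the left leaves of $T(n)$. Lemma \ref{claim2} would follow immediately from this correspondence.

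The pruning would consist of the familiar four stages: an \emph{initial correction} at the first supernode (deleting its $s$ labels and moving in the last few labels of $T(n)$); a \emph{deletion step} removing $\sum_{i=1}^p C_T(n-(2i-1)j)$ labels in total; a \emph{lifting step} moving the surviving leaf labels up into their parent penultimate nodes and subdividing those nodes into the standard $j$-cell leaf pattern (with $1+m$ labels in the last cell); and a \emph{relabelling step}. The crucial structural observation is that within each subtree consisting of two sibling leaves and their parent penultimate node, the total number of labels that must disappear for the penultimate node to become a full leaf is $(j+m) + x = 2pj$, which for $p \geq 2$ and $m$ small exceeds the $2(j+m)$ labels available in the two leaves. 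Thus the deletion must reach beyond the leaf cells into the penultimate regular nodes themselves. The natural prescription is to first remove one label from each leaf cell containing an entry $\leq n - j$ (accounting for the $C_T(n-j)$ summand), and then to remove additional labels from the penultimate nodes under the stricter thresholds $n - 3j, \ldots, n - (2p-1)j$, accounting for $C_T(n-3j), \ldots, C_T(n-(2p-1)j)$ respectively.

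Once the pruning is in place, the bijection proof would proceed by a case analysis on the position of the label $n$ relative to the penultimate node $P$ under consideration, paralleling Cases 1--3 in the proof of Lemma \ref{lem:mstarcount}: when the left child of $P$ is empty in $T(n)$; when it contains a partial set of labels; and when it is full with $n$ lying strictly beyond. The third case would again be reduced to the second by showing that $f(n) := n - s - \sum_{i=1}^p C_T(n-(2i-1)j)$ is monotone nondecreasing, which follows from the same label-at-a-time argument used in Section \ref{sec:order2}. The hypothesis $n > 3(j+m) + x + 2s$ ensures that the left leaf of the second penultimate node is full in $T(n)$, which is needed for the special first-supernode case of the bijection to be well-defined, just as in Lemma \ref{lem:mstarcount}.

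The main obstacle I anticipate is the deletion step itself. In the order 1 setting each full leaf cleanly absorbs exactly $j$ deletions and each penultimate regular node gains $2m$ labels to become a full leaf; for order $p \geq 2$ this local balance can force a net removal of labels from the penultimate nodes, and one must check with care that the threshold-based rule sketched above deletes exactly $\sum_{i=1}^p C_T(n-(2i-1)j)$ labels in total and leaves each non-terminal penultimate node holding exactly $j + m - 2r$ labels, where $r$ denotes the number of labels surviving in each of its leaf children. After lifting, the penultimate node then carries the standard full-leaf complement of $j+m$ labels and the cell subdivision produces a leaf of the correct shape, so that $T^{*}(n) = T(N)$. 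Once this bookkeeping is verified, the remainder of the argument should closely track the order 1 proof.
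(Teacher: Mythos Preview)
Your overall architecture---define a pruning so that $T^*(n)=T(N)$ with $N=n-s-\sum_{i=1}^p C_T(n-(2i-1)j)$, then exhibit a bijection between nonempty cells of $T^*(n)$ and nonempty left-leaf cells of $T(n)$---is exactly the paper's strategy (Lemmas \ref{claim3} and \ref{claim4}). However, two of the specific mechanisms you propose do not survive the passage from $p=1$ to general $p$.

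\textbf{The monotonicity claim is false.} You reduce Case~3 to the inequality $f(x_j)\le f(n)$ and assert that $f(n)=n-s-\sum_{i=1}^p C_T(n-(2i-1)j)$ is nondecreasing ``by the same label-at-a-time argument.'' For $p\ge 2$ this fails: several summands $C_T(n-(2i-1)j)$ can jump simultaneously, so $f$ can drop. Concretely, take $s=0$, $j=2$, $m=2$, $p=2$ (so $x=4$); then $C_T$ begins $1,2,2,2,3,4,4,4,4,4,4,4,5,6,6,6,7,8,\ldots$ and $f(18)=18-C_T(16)-C_T(12)=8$ while $f(19)=19-C_T(17)-C_T(13)=7$. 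The paper never invokes monotonicity; instead it proves a technical counting lemma (Lemma \ref{lem1}) giving the exact number of labels removed from a leaf in terms of the quotient and remainder of a certain quantity mod $2j$, and then carries out a direct case analysis (Cases 1--3, each with subcases) on the position of $n$ relative to $L$, $R$, and the node following $R$, verifying in each subcase that $P$ ends with between $j$ and $j+m$ labels.

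\textbf{The deletion rule and initial correction need reworking.} Your rule ``remove one label from each leaf cell with an entry $\le n-j$, then remove from the penultimate nodes for the thresholds $n-3j,\ldots,n-(2p-1)j$'' does not delete the right number of labels locally: $C_T(n-(2i-1)j)$ counts nonempty \emph{leaf} cells, not penultimate nodes, and a penultimate node holds only $x$ labels while the intended per-pair removal is $2pj$. The paper's deletion step instead iterates over $i=1,\ldots,p$, removes one label from each cell nonempty in $T(n-(2i-1)j)$, and lets the excess overflow first to the last cell of the leaf and then to the parent; this is what makes Lemma \ref{lem1} available. Correspondingly, the paper does not move the last $j-m$ labels into the first supernode as in Section~\ref{sec:order2}; it inserts $x=(2p-1)j-m$ placeholder labels there and compensates with a separate \emph{end correction step} that removes the $x$ largest surviving labels after lifting. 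Without this two-sided correction the first-supernode case and the tail case (your analogue of Case~3) do not balance.
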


As in Section \ref{sec:order2}, our proof relies on a pruning technique on $T(n)$ that we now describe. Once again we use $T^*(n)$ to denote the pruned tree that results from applying this technique. See Figures \ref{fig:s=0_j=3_y=2_p=2_initialcorrection}, \ref{fig:s=0_j=3_y=2_p=2_deletion}, \ref{fig:s=0_j=3_y=2_p=2_lifting}, \ref{fig:s=0_j=3_y=2_p=2_endcorrection} and \ref{fig:s=0_j=3_y=2_p=2_afterpruning} where we illustrate the pruning operation on our running example.

\begin{figure}[h]
 	\includegraphics[scale=.14]{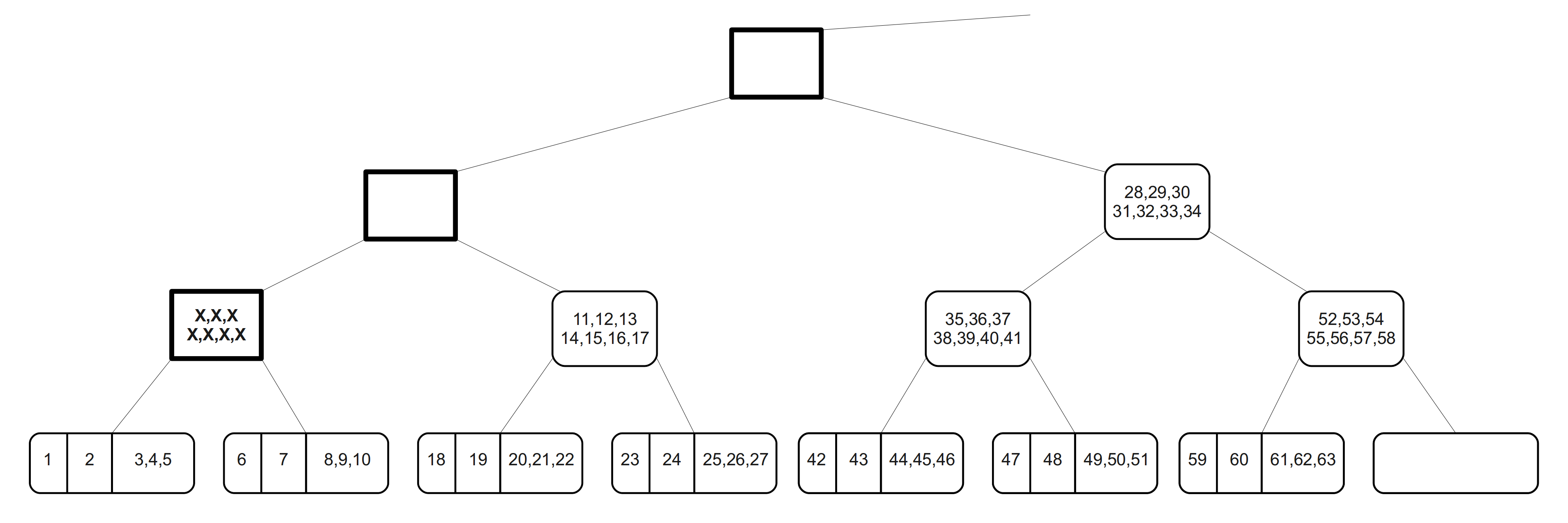}
 \caption{Initial correction step when pruning $T(63)$ where $(s,j,m,p) = (0,3,2,2)$.}\label{fig:s=0_j=3_y=2_p=2_initialcorrection}
\end{figure}

\begin{figure}[h]
	\includegraphics[scale=.14]{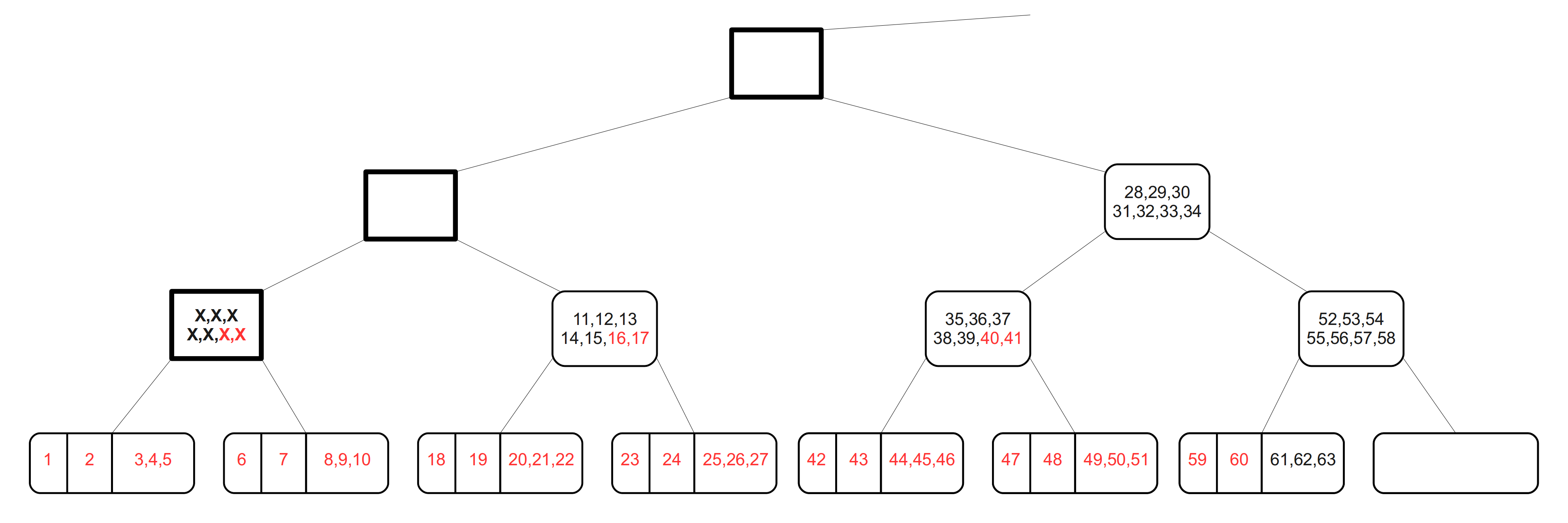}
\caption{Deletion step when pruning $T(63)$ where $(s,j,m,p) = (0,3,2,2)$.}\label{fig:s=0_j=3_y=2_p=2_deletion}
\end{figure}

\begin{figure}[h]
	\includegraphics[scale=.14]{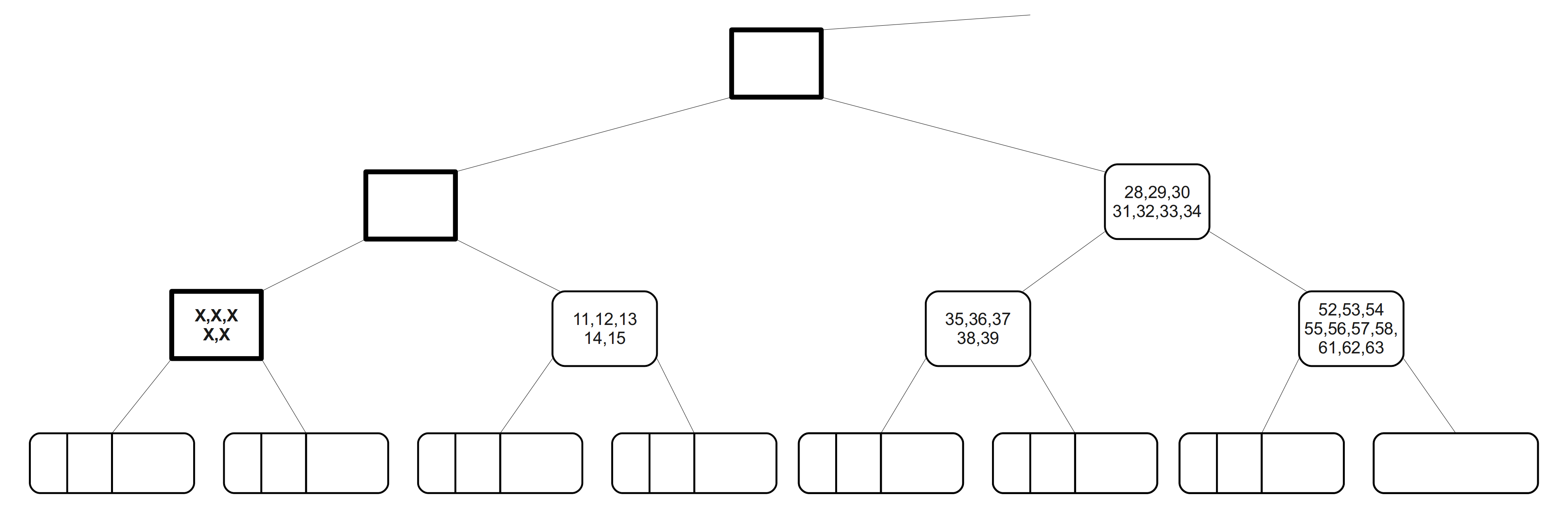}
  \caption{Lifting step when pruning $T(63)$ where $(s,j,m,p) = (0,3,2,2)$.}\label{fig:s=0_j=3_y=2_p=2_lifting}
\end{figure}

\begin{figure}[h]
	\includegraphics[scale=.14]{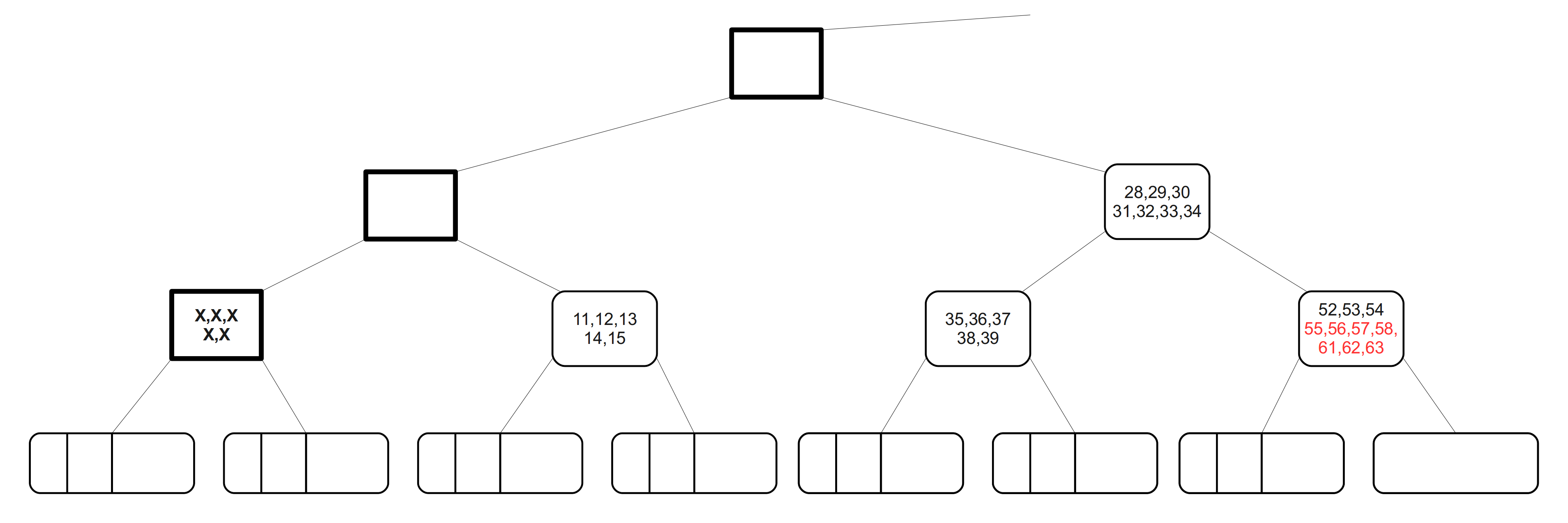}
  \caption{End correction when pruning $T(63)$ where $(s,j,m,p) = (0,3,2,2)$.}\label{fig:s=0_j=3_y=2_p=2_endcorrection}
\end{figure}

\begin{figure}[h]
	\includegraphics[scale=.14]{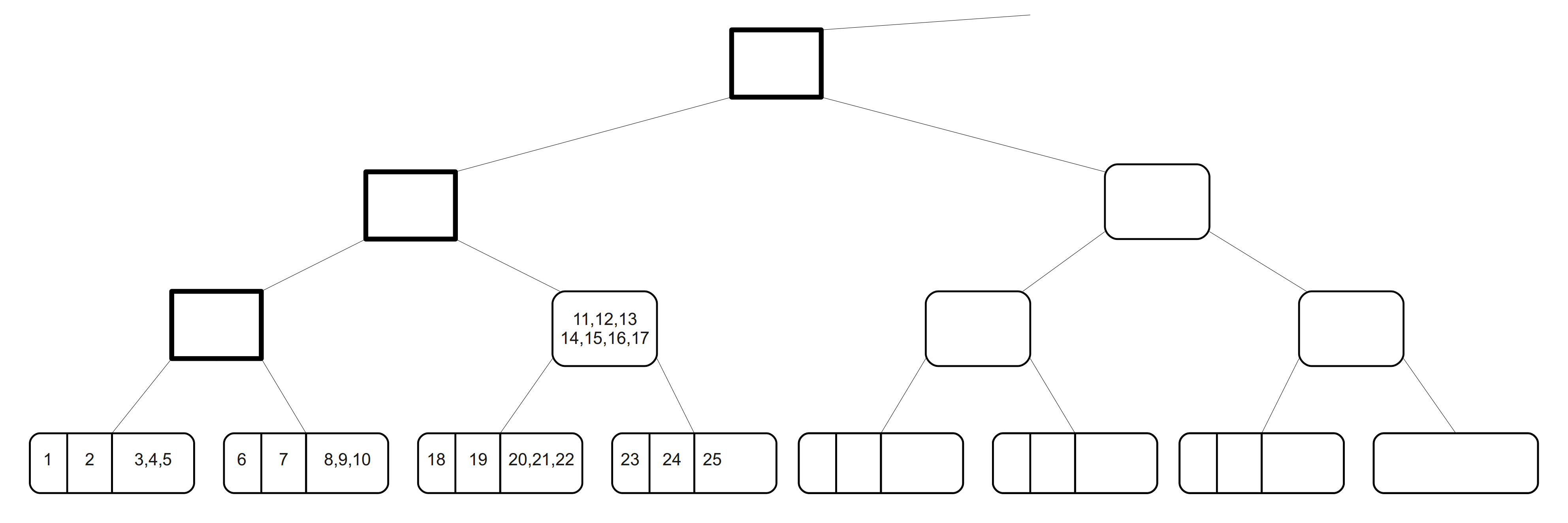}
  \caption{Relabelling step when pruning $T(63)$ where $(s,j,m,p) = (0,3,2,2)$.}\label{fig:s=0_j=3_y=2_p=2_afterpruning}
\end{figure}

\begin{description}
\item[initial correction step] Remove the $s$ labels from the first supernode of $T(n)$ and replace them with $x$ labels. We do not
identify these new labels until we reach the relabelling step below.
\item[deletion step] For each $i = 1, \ldots, p$, consider the tree $T(n-(2i-1)j)$ as a subtree of $T(n)$. For every nonempty
cell in $T(n-(2i-1)j)$ remove a label from the corresponding cell in $T(n)$. If that cell in $T(n)$ has already been emptied by an earlier application of this process then remove a label from the last cell of the corresponding leaf containing the empty cell (so long as a label is available). If both the cell
and the last cell of the leaf containing said cell already have been emptied by this process, then remove a label from the corresponding parent node at the
penultimate level.

We pause to confirm that there are enough labels to apply the instructions in this step. Note that each leaf has $j$ cells and there are $p$ subtrees so the maximum number of labels that can be attempted to be deleted from a pair of sibling leaves is $2pj$. But the total number of labels
within such a pair of sibling leaves and their parent is $2(j+m) + x = m + (2p+1)j>2pj$ so there are enough labels.
\item[lifting step] Lift any remaining labels in a leaf into the corresponding parent at the penultimate level. Note that any leaf with the first $j$ of its labels all less than $n-(2p-1)j$ will be left with $m-(p-1)j$ labels to lift so long as $m>(p-1)j$, and 0 labels otherwise.
\item[end correction step] Remove the $x$ largest labels in preorder of $T(n)$ that remain after the lifting step.
Note that this removal fully offsets the insertion of $x$ labels in the first supernode during the initial correction step.
\item[relabelling step] Remove the leaves of $T(n)$ (which are now empty) and relabel the new tree in preorder.
The former penultimate level nodes are now the leaves of the new tree. Make $j$ cells for every leaf and assign 1 label in each of the first $j-1$ cells and the remaining entries in the last cell. If some leaf has less than $j+m$ labels
then fill in its cells as just described, recognizing that some cells may remain empty or, in the case of the last cell, only partially filled.
\end{description}

In pruning $T(n)$ we have removed $s + \sum_{i=1}^p C_T(n-(2i-1)j)$ labels. Thus $T^*(n)$ is a tree with the same skeleton structure as $T(n)$ and with $n-s-\sum_{i=1}^p C_T(n-(2i-1)j)$ labels.

The strategy behind our proof of Lemma \ref{claim2} is to show first that the pruning operation on $T(n)$ creates a tree $T^*(n)$ that conforms to the labelling rules that we described above. Together with what we have just observed about the skeleton of the tree $T^*(n)$, this will imply that $T^*(n)$ is identical to $T(n-s-\sum_{i=1}^p C_T(n-(2i-1)j))$, so that the number of nonempty cells in $T^*(n)$ is $C_T(n-s-\sum_{i=1}^p C_T(n-(2i-1)j))$.
Then we will demonstrate a bijection between the nonempty cells of $T^*(n)$ and the nonempty cells of $T(n)$ that are on left leaves.
Together these two assertions imply Lemma \ref{claim2}. Thus, to prove Lemma \ref{claim2} we will establish the following two lemmas:

\begin{lemma} \label{claim3}
The pruned tree $T^*(n)$ is identical to  $T(n-s-\sum_{i=1}^p C_T(n-(2i-1)j)$.
\end{lemma}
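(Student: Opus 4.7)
The plan is to verify that the five-step pruning operation produces a tree $T^*(n)$ whose per-node label distribution matches what the $T$-labelling rule prescribes at $n' := n - s - \sum_{i=1}^p C_T(n-(2i-1)j)$ labels. Since the pruning preserves the infinite binary skeleton of $T$, and the net label bookkeeping (initial correction contributes $x - s$; deletion contributes $-\sum_{i=1}^p C_T(n-(2i-1)j)$; lifting contributes $0$; end correction contributes $-x$) shows that $T^*(n)$ has exactly $n'$ labels, it suffices to check the per-node counts and the $j$-cell structure of the new leaves.

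I would classify the nodes of $T(n)$ into five types: (i) the first supernode, (ii) other supernodes (all at level $\geq 3$), (iii) regular non-penultimate nodes, (iv) other penultimate nodes, and (v) leaves. Nodes of types (ii) and (iii) are not targeted by any of the five pruning steps, so they keep their $s$ and $x$ labels respectively; after the relabelling step shifts them down one level, they occupy exactly the positions in $T^*(n)$ that demand those counts. For type (v), a cell-by-cell accounting of the cascade and lifting rules shows every old leaf ends empty: labels are either directly deleted in their cells, cascaded to the last cell of the leaf and thence to the parent, or lifted upward if a residue remains; so the relabelling step may safely delete all old leaves.

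The main case is (iv). For a penultimate node $P$ whose two leaf children are both fully populated in $T(n-(2p-1)j)$, each of the $2j$ child cells is nonempty in every $T(n-(2i-1)j)$ for $i=1,\ldots,p$, so the deletion step attributes $p$ removals to each such cell. Tracing the cascade cell-by-cell yields the key identity that each child accounts for a net change of $(m-(p-1)j)$ in $P$'s label count, either via cascaded removals when $m<(p-1)j$ or via lifted residue when $m>(p-1)j$; combining the two children, $P$ ends with $x + 2(m-(p-1)j) = j+m$ labels, which the relabelling step packs into the prescribed $j$-cell configuration of $1$ label in each of the first $j-1$ cells and $1+m$ in the last. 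The same accounting applies verbatim to the first supernode thanks to the initial correction step, which has replaced its $s$ labels with $x$ labels so that it behaves like a non-boundary penultimate and becomes the leftmost new leaf of $T^*(n)$ with $j+m$ labels.

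The delicate sub-case is the right boundary: the rightmost one or two penultimate nodes of $T(n)$, whose leaf children are only partially populated (or entirely unpopulated) in $T(n-(2p-1)j)$. These receive fewer cascaded removals and more lifted residue, so they carry strictly more than $j+m$ labels after lifting, and the end correction step is designed to shave off exactly the $x$ surplus labels, namely the $x$ largest in preorder, so that the rightmost populated old penultimate of $T(n)$ ends with precisely the residue prescribed for the last (possibly partial) leaf of $T(n')$. I expect this end-boundary verification to be the main technical obstacle: one must locate where the label $n$ sits in preorder, count the surplus labels accumulated on the boundary penultimate(s) after lifting, and confirm that removing exactly the $x$ largest of them yields the tail configuration prescribed by $T(n')$. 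Once this is checked, every node of $T^*(n)$ carries the $T(n')$-prescribed label count and cell structure, so $T^*(n) = T(n')$ follows.
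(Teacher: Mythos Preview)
Your overall strategy matches the paper's: both reduce to per-node label counts, dispose of the non-boundary nodes quickly, and isolate the rightmost penultimate node(s) as the crux. Your handling of types (ii), (iii), (v), and the non-boundary part of (iv) (including the first supernode via initial correction) is essentially what the paper does, summarized in its Lemma~\ref{lem2}.

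The gap is in the boundary case, which you explicitly leave open but also mischaracterize. Your description (``carry strictly more than $j+m$ labels after lifting, and the end correction step is designed to shave off exactly the $x$ surplus labels'') suggests the arithmetic is self-correcting: it is not. The end correction removes exactly $x$ labels regardless of the actual surplus, and there is no a priori reason the boundary penultimate should end up with at most $j+m$ labels (or with a count consistent with the partial last leaf of $T(n')$). The paper establishes this via a dedicated counting lemma (Lemma~\ref{lem1}), which computes how many labels a leaf loses as a function of the number $\mu$ of labels at or after it, and then a case analysis on the position of the label $n$: three main cases ($n$ in the left child $L$, in the right child $R$, or after $R$) split into seven subcases, each requiring nontrivial inequalities in $j,m,p$ to pin the final count between the correct bounds. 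This analysis occupies the bulk of Section~\ref{sec3:proof} and is where the constraint $0 \le m \le (2p-1)j$ is actually used. So the ``main technical obstacle'' you flag is not a routine check to be filled in later; it is the substance of the lemma, and your proposal does not yet contain the key tool (the analogue of Lemma~\ref{lem1}) or the inequalities needed to close it.
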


\begin{lemma} \label{claim4}
Let $P$ be a penultimate node of $T(n)$. Then $P$ becomes a leaf node of $T^*(n)$ and the number of nonempty cells of $P$ in $T^*(n)$ is equal to the number of nonempty cells of the left child of $P$ in $T(n)$.
\end{lemma}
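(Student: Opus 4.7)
The plan is to mirror the three-case structure of the proof of Lemma \ref{lem:mstarcount}, adapted to the order-$p$ pruning operation. First I isolate the case where $P$ is the first supernode. The hypothesis $n>3(j+m)+x+2s$ of Lemma \ref{claim2} guarantees that both children of $P$ are full leaves in $T(n)$, and a direct trace of the pruning steps is then available: initial correction exchanges the $s$ labels of $P$ for $x$ fresh labels; the deletion step, via the overflow rule, removes $2\max(0,(p-1)j-m)$ labels from $P$ once the $p-1$ passes after $i=1$ have exhausted the last cells of $L$ and $R$; and lifting transfers back $2\max(0,m-(p-1)j)$ labels from those cells. A short computation shows that in either regime the net count on $P$ is exactly $j+m$, so $P$ is a full leaf of $T^*(n)$ with $j$ nonempty cells, matching the $j$ nonempty cells of its full left child.

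For a generic non-supernode penultimate $P$, let $c$ denote the number of nonempty cells of the left child $L$ of $P$ in $T(n)$, and split into three cases as in Lemma \ref{lem:mstarcount}. In Case 1 ($c=0$, so $n$ lies at or before $P$), neither $L$ nor $R$ carries any label in any $T(n-(2i-1)j)$, so no deletion reaches $P$; any labels on $P$ are among the last $x$ in preorder and are erased by end-correction, leaving $P$ empty in $T^*(n)$. In Case 2 ($0<c\le j$, so $n$ is the $c$-th label of $L$), each shifted tree $T(n-(2i-1)j)$ terminates strictly before $L$, so no deletion or overflow touches $P$ either; lifting then moves the $c$ labels of $L$ up to $P$, and end-correction removes the $x$ largest labels in preorder, which are precisely the $x$ original labels of $P$. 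The cell-filling rule then places one label in each of cells $1,\ldots,c$ of $P$, giving exactly $c$ nonempty cells.

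Case 3 ($L$ full in $T(n)$ with more than $j$ labels on it) is where I expect the main difficulty. Following Lemma \ref{lem:mstarcount}, I would set $x_j$ equal to the $j$-th label on $L$, use the $c=j$ boundary of Case 2 to conclude that $P$ has $j$ labels in $T^*(x_j)$, and then invoke monotonicity of
\[
f(n):=n-s-\sum_{i=1}^{p}C_T(n-(2i-1)j)
\]
to conclude that $T^*(n)$ extends $T^*(x_j)$, forcing $P$ to have at least $j$ labels in $T^*(n)$. The obstacle is that, unlike the order-$1$ case, a single increment of $n$ can in principle cause $\sum_{i=1}^{p}C_T(n-(2i-1)j)$ to jump by as much as $p$, so monotonicity of $f$ does not follow from a term-by-term argument. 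I would resolve this by a structural argument on the preorder walk: the positions $n+1-(2i-1)j$ for different $i$ are separated by multiples of $2j$, and the block structure of the tree (regular nodes with $x=(2p-1)j-m$ labels, supernodes with $s$ labels, leaves of size $j+m$) forces simultaneous cell-starts at these positions to be balanced by $n$-values at which no $C_T$ term increments, yielding the needed inequality $f(n)\ge f(x_j)$. Alternatively, one can bypass monotonicity by repeating the deletion--lifting bookkeeping of the first-supernode case directly for any $P$ whose children are full in $T(n-(2p-1)j)$ — this again yields exactly $j+m$ labels on $P$ — supplemented by a short separate analysis when $n$ lies inside $R$.
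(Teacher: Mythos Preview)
Your scaffold mirroring Lemma~\ref{lem:mstarcount} is sound for the first supernode and for Cases 1 and 2 (with one harmless slip in Case 2: the $x$ labels removed in end correction are the $c$ labels lifted from $L$ together with the top $x-c$ of $P$'s original labels, not ``precisely the $x$ original labels of $P$''; the net count on $P$ is still $c$).

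The substantive gap is in Case 3. Your primary plan hinges on monotonicity of
\[
f(n)=n-s-\sum_{i=1}^{p}C_T\bigl(n-(2i-1)j\bigr),
\]
and you rightly flag that the termwise argument from the order-$1$ case no longer applies. The difficulty is not merely that monotonicity is hard to prove: it is \emph{false}. Take $(s,j,m,p)=(0,1,1,2)$, so $x=2$ and leaves carry $2$ labels; one finds $C_T(6)=2$, $C_T(7)=3$, $C_T(8)=3$, $C_T(9)=4$, whence $f(9)=9-3-2=4$ but $f(10)=10-4-3=3$. Thus $f$ strictly decreases at $n=10$, and ``$T^*(n)$ extends $T^*(x_j)$'' cannot be obtained from monotonicity. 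Your proposed ``structural balancing'' patch is too vague to rescue this; at best one might aim for the weaker inequality $f(n)\ge f(x_j)$, but that itself is a nontrivial statement you have not established, and in any event it presupposes Lemma~\ref{claim3} to translate label counts into node-by-node comparisons.

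Your fallback (b), direct deletion--lifting bookkeeping, is exactly the route the paper takes --- but it is nowhere near the ``short separate analysis'' you describe. The situations where $n$ lies in the last cell of $L$, anywhere in $R$, or in the partially filled regular node following $R$ constitute the heart of the proof: the paper handles them through six subcases (its 1b, 2a--2c, 3a--3b), each using a technical counting lemma (Lemma~\ref{lem1}) to compute exactly how many labels $L$ and $R$ shed in the deletion step, followed by nontrivial two-sided inequalities to pin the resulting label count on $P$ into the interval $[j,\,j+m]$. Both bounds are needed (the upper bound is what makes $P$ a legitimate leaf of $T^*(n)$; this is why the paper proves Lemmas~\ref{claim3} and~\ref{claim4} together), and none of this machinery appears in your sketch.
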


\subsection{Proof of Lemmas \ref{claim3} and \ref{claim4}.}\label{sec3:proof}

We prove both lemmas simultaneously. We begin with a preliminary discussion of each.

As noted above, to prove Lemma \ref{claim3} we need only show that the labelling of $T^*(n)$ is in accordance with the rules that we have laid out above. That is, except for the last nonempty node in $T^*(n)$, all super nodes of $T^*(n)$ have $s$ labels, the leaves have $j+m$ labels and the regular nodes have $x$ labels each. Finally, the last nonempty node of $T^*(n)$ cannot contain more than $s,j+m$ or $x$ labels respectively, depending on its type.

As in Section \ref{sec:order2}, we can think of every node in $T^*(n)$ as being part of $T(n)$. By the design of the pruning operation on $T(n)$, all nodes in $T^*(n)$, except for the leaves of $T^*(n)$ (which are the former penultimate nodes of $T(n)$) and the last nonempty node (which may or may not be a leaf of $T^*(n)$), contain the same number of labels as they do in $T(n)$. So to prove Lemma \ref{claim3} we need only focus on the leaves of $T^*(n)$ and its last nonempty node. First, we make a simple yet important observation that is used several times in the argument.

\begin{lemma} \label{lem2}
Let $\mathbf{P}$ be a penultimate level node of $T(n)$ with left child $\mathbf{L}$ and right child $\mathbf{R}$. Suppose that $T(n)$ contains at least one completely filled regular node following $\mathbf{R}$. Then as a leaf of $T^*(n)$, the node $\mathbf{P}$ contains $j+m$ labels.
\end{lemma}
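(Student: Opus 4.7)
The plan is to track the total label count on the triple $\{\mathbf{P},\mathbf{L},\mathbf{R}\}$ through each of the five pruning steps and verify that $\mathbf{P}$ ultimately holds exactly $j+m$ labels. If $a$ denotes the number of labels preceding $\mathbf{P}$ in preorder, the hypothesis yields the quantitative bound $n \geq a + 4pj$, since the completely filled regular node $N$ following $\mathbf{R}$ contributes at least $x$ further labels on top of the full labelling of $\mathbf{P},\mathbf{L},\mathbf{R}$ itself. From this I would extract two consequences. First, $\mathbf{L}$ and $\mathbf{R}$ are each fully labelled in $T(n)$, giving an initial count $x + 2(j+m) = (2p+1)j + m$ on $\{\mathbf{P},\mathbf{L},\mathbf{R}\}$. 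Second, for every $i=1,\ldots,p$, all $j$ cells of $\mathbf{L}$ and of $\mathbf{R}$ are nonempty in $T(n-(2i-1)j)$, because $n-(2p-1)j$ still exceeds the first label of the last cell of $\mathbf{R}$ (and a fortiori of $\mathbf{L}$).

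I would then analyze the deletion step. The second consequence above shows that the step issues exactly $2pj$ removal attempts targeting cells of $\mathbf{L}$ and $\mathbf{R}$. The overflow rule (cell, then last cell of the leaf, then penultimate parent) confines all these removals to $\{\mathbf{P},\mathbf{L},\mathbf{R}\}$, since the supply $(2p+1)j + m$ exceeds the demand $2pj$ by $j+m \geq 0$. Crucially, the total overflow ever directed at $\mathbf{P}$ is at most $2\max((p-1)j - m,\,0) \leq x$, so $\mathbf{P}$ retains at least one label at every intermediate moment when an overflow redirection lands on it. Consequently the combined count on $\{\mathbf{P},\mathbf{L},\mathbf{R}\}$ drops from $(2p+1)j + m$ to $j+m$ after the deletion step, and the subsequent lifting step funnels all remaining leaf labels of $\mathbf{L}$ and $\mathbf{R}$ into $\mathbf{P}$, leaving exactly $j+m$ labels on $\mathbf{P}$.

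Finally, I would verify that the other two steps leave this count alone. The end correction removes only the $x$ largest labels of $T(n)$, all of which lie strictly past the last label of $\mathbf{R}$ by $n \geq a + 4pj$, so it does not touch $\{\mathbf{P},\mathbf{L},\mathbf{R}\}$. The initial correction alters only the first supernode: if $\mathbf{P}$ is not that supernode then it is untouched, and if $\mathbf{P}$ is the first supernode then its $s$ labels are replaced by $x$ labels, restoring the baseline count assumed for a generic regular node in the deletion analysis, so the same conclusion follows.

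The main obstacle is the bookkeeping in the deletion step. A priori the cascading overflow rule interacts nontrivially with the scheduling of removals across the $p$ iterations, and one must exclude the possibility that an attempt escapes the subtree rooted at $\mathbf{P}$ into an ancestor. The cleanest resolution is to replace any local, step-by-step analysis with the aggregate supply and demand argument above, reducing the invariant ``$\mathbf{P}$ always has a label available when an overflow redirection arrives'' to the universally valid inequality $m \geq -j$.
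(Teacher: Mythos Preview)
Your proposal is correct and follows essentially the same approach as the paper's proof: both use the filled regular node $Q$ after $\mathbf{R}$ to ensure that every cell of $\mathbf{L}$ and $\mathbf{R}$ is nonempty in each subtree $T(n-(2i-1)j)$, count exactly $2pj$ deletions against the supply $x+2(j+m)$ on $\{\mathbf{P},\mathbf{L},\mathbf{R}\}$, and then observe that the $x$ labels on $Q$ shield this triple from the end correction. Your additional verification that overflow never exhausts $\mathbf{P}$ (via the bound $2\max((p-1)j-m,0)\le x$) is already covered by the paper's global check in the description of the deletion step, so it is sound but not new.
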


\begin{proof}
The node $\mathbf{R}$ has $j+m$ labels on itself in $T(n)$. By assumption, the first regular node in $T(n)$ that follows $\mathbf{R}$, say $Q$, is full with $x$ labels on it (note that it cannot be a leaf). Therefore, there are at least $j+m+x=2pj$ labels on or after $\mathbf{R}$ in $T(n)$. Consequently, $\mathbf{R}$ has at least $j$ labels (and thus all $j$ nonempty cells) in each of the subtrees $T(n-j), \ldots, T(n-(2p-1)j)$, and thus so does $\mathbf{L}$. We conclude that the total number of labels removed from $\mathbf{P}$, $\mathbf{L}$, and $\mathbf{R}$ during the deletion step is the maximum amount, namely $2pj$. Note that since $Q$ has $x$ labels, and comes after $\mathbf{R}$, none of the labels removed for the end correction step will come from $\mathbf{P}$, $\mathbf{L}$, or $\mathbf{R}$. Thus, after the lifting step moves all remaining labels from $\mathbf{L}$ and $\mathbf{R}$ to $\mathbf{P}$, and pruning is completed, $\mathbf{P}$ will have $x + 2(j+m) - 2pj = j + m$ labels.
\end{proof}

Going back to the discussion about Lemma \ref{claim3}, consider first the last nonempty node of $T^*(n)$. Suppose that it is \emph{not} a leaf of $T^*(n)$. Then in $T(n)$ this node is neither a leaf nor a penultimate node. Therefore the pruning operation on $T(n)$ doesn't add any labels to this node (the end correction step of the pruning may remove some labels). After pruning, this last nonempty node in $T^*(n)$ has at most the same number of labels that it has in $T(n)$, which is what we require.

Next we turn to the leaves of $T^*(n)$. First consider a leaf $P$ of $T^*(n)$ that is \emph{not} one of the last two nonempty penultimate nodes in $T(n)$. Then, in $T(n)$, $P$ is a penultimate level node with its full complement of $x$ labels, both its children must contain a full complement of $j+m$ labels, and there must be a full penultimate node with two full leaf children that follow $P$. We can thus apply Lemma \ref{lem2} to conclude that the number of labels in $P$ in $T^*(n)$ is $j+m$, as required.

Finally we consider the two leaves $P_a$ and $P_b$ of $T^*(n)$ that are the last two nonempty penultimate nodes in $T(n)$, where $P_b$ is to the right of $P_a$. To establish the required result for these two nodes we have to show: (1) if $P_b$ is not the last nonempty node of $T^*(n)$ then both $P_a$ and $P_b$ contain the full complement of $j+m$ labels in $T^*(n)$; (2) if $P_b$ is the last nonempty node in $T^*(n)$, then $P_a$ contains $j+m$ labels and $P_b$ contains at most $j+m$ labels in $T^*(n)$; and (3) if $P_b$ is empty in $T^*(n)$ then $P_a$ contains at most $j+m$ labels in $T^*(n)$.

Consider the requirement in case (1). Here we observe that we can apply Lemma \ref{lem2} to both $P_a$ and $P_b$. Indeed, we can apply it to $P_a$ since $P_b$ will be full in $T(n)$. But we can also apply it to $P_b$ because there is a nonempty node $Q$ in $T^*(n)$ following $P_b$. $Q$ is not a leaf of $T^*(n)$ by definition of $P_b$. So $Q$ must be located above the penultimate level in $T(n)$. It will also contain $x$ labels in $T(n)$, for otherwise, it would become the last nonempty node in $T(n)$ and all its labels would be deleted during the end correction step. However, this would make $P_b$ the last nonempty node of $T^*(n)$. Therefore we have verified Lemma \ref{claim3} for $P_a$ and $P_b$ in case (1).

To verify cases (2) and (3) it suffices to show the following. Let $P$ be the last penultimate node of $T(n)$ that has at least one nonempty child in $T(n)$.
Then $P$ has between 0 to $j+m$ labels in $T^*(n)$. Indeed, if $P_b$ has a nonempty child in $T(n)$ (so $P = P_b$) then we can apply Lemma \ref{lem2} to $P_a$ in $T(n)$ due to $P_b$ being full. Hence, $P_a$ contains $j+m$ labels as a leaf of $T^*(n)$, and to establish (2) we need to show that $P_b$ contains between 0 to $j+m$ labels in $T^*(n)$. On the other hand, if both children of $P_b$ are empty in $T(n)$ (so $P = P_a$) then during the end correction step all labels from $P_b$ are removed. This makes $P_b$ empty in $T^*(n)$ and to show (3) we need to worry about the number of labels in $P_a$. Note that in both cases (2) and (3) node $P$ is the last non leaf node of $T(n)$ that is completely filled. So we have to show that if $T(n)$ is such that it contains a node $P$ which is its last penultimate node with a nonempty child and also its last filled non leaf node, then $P$ has between 0 to $j+m$ labels in $T^*(n)$.

Before we conclude the proof of Lemma \ref{claim3} we examine in a similar way what must be shown to prove Lemma \ref{claim4}. The first part of the lemma has already been covered in the above discussion, so what remains is to show that the number of nonempty cells of any leaf $P$ in $T^*(n)$ is equal to the number of nonempty cells of the left child of $P$ in $T(n)$.

Suppose the node $P$ is the last nonempty penultimate node of $T(n)$ and that its left child is empty in $T(n)$. Then as a result of the end correction step in the pruning process $P$ will be empty in $T^*(n)$, as required by Lemma \ref{claim4}. If $P$ is not the last nonempty penultimate node of $T(n)$ then
it is followed by a non leaf node $Q$ that is also nonempty in $T(n)$. Also, the left child of $P$ contains $j+m$ labels in $T(n)$. If $Q$ is completely filled in $T(n)$ then by Lemma \ref{lem2} the node $P$ will contain $j+m$ labels as a leaf of $T^*(n)$, and hence $j$ cells as required by Lemma \ref{claim4}. Thus, we are left to consider only one case: $P$ is the last penultimate node of $T(n)$ with a nonempty left child and where the first non leaf node $Q$ following $P$ contains less than $x$ labels in $T(n)$ ($Q$ could possibly be empty). We must show that in this case $P$ contains the same number of cells in $T*(n)$ as its left child does in $T(n)$.

Therefore, in order to complete the proofs of both Lemma \ref{claim3} and \ref{claim4} we must consider the following. Suppose the tree $T(n)$ contains a penultimate node $P$ with the property that it is the last penultimate node with a nonempty left child in $T(n)$ and that it is also the last completely filled non leaf node of $T(n)$. Then, we must show that as a leaf of $T^*(n)$ the node $P$ contains at most $j+m$ labels, and also that it has same number of nonempty cells as its left child does in $T(n)$. Until the end of this section let $P$ denote such a penultimate node, and denote by $L$ and $R$ the left and right child leaf of $P$ respectively ($R$ may be empty in $T(n)$). Our two requirements can be expressed as upper and lower bounds on the net number of labels being removed from and lifted into $P$ during the pruning operation.

In our proof we will establish these bounds on a case by case basis, where the cases (and subcases) are determined by the position of the label $n$ in $T(n)$. In doing so we will make frequent use of the following technical lemma:

\begin{lemma} \label{lem1}
Let $Y$ be a nonempty leaf in $T(n)$ and suppose that there are $\mu$ labels in $T(n)$ that
are situated at or after node $Y$ (so the smallest label in $Y$ is $n-\mu + 1$). If $\mu \leq j$ then
the number of labels removed from $Y$ during the pruning operation is $0$. If $\mu > j$ then
write $\mu - j = q(2j) + r$ with $0 \leq r < 2j$. In this case the number of labels removed from $Y$ and possibly its parent in $T(n)$ is $jq+ \min \{r,j\}$ provided that $q \leq p-1$; otherwise, the number of labels removed from $Y$ and possibly its parent in $T(n)$ is $pj$.
\end{lemma}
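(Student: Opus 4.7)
The plan is to count directly the number of deletion attempts triggered by cells of $Y$ during the pruning operation, and to argue that each such attempt removes exactly one label from $Y$ or from its parent $P$ through the prescribed cascade. First, I would identify the labels on $Y$: by the labelling rule and the hypothesis that the smallest label on $Y$ is $n-\mu+1$, cell $k$ with $1\leq k\leq j-1$ holds the single label $n-\mu+k$, while cell $j$ holds $1+m$ labels starting at $n-\mu+j$. It follows that cell $k$ of $Y$ is nonempty in $T(n-(2i-1)j)$ exactly when $\mu\geq (2i-1)j+k$.

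The case $\mu\leq j$ is then immediate: for every $i\geq 1$ one has $\mu-(2i-1)j\leq 0$, so no cell of $Y$ is nonempty in any $T(n-(2i-1)j)$ and no deletion attempt is triggered by $Y$. For $\mu>j$, the number of nonempty cells of $Y$ in $T(n-(2i-1)j)$ is $n_i=\min\{j,\max\{0,\mu-(2i-1)j\}\}$. Substituting $\mu - j = 2jq + r$ gives $\mu-(2i-1)j = 2j(q-i+1)+r$; thus $n_i = j$ for $1\leq i\leq q$, $n_{q+1}=\min\{j,r\}$, and $n_i=0$ for $i\geq q+2$. Summing over $i=1,\ldots,p$ yields the total deletion count $D = jq + \min\{j,r\}$ when $q\leq p-1$, and $D = pj$ when $q\geq p$, matching both branches of the lemma.

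It remains to check that each of the $D$ attempts removes exactly one label from either $Y$ or $P$, with no overflow to a grandparent. The deletion rule routes attempts on empty cells first to cell $j$ of $Y$ (which initially holds $1+m$ labels) and, once cell $j$ is empty, to $P$. After the initial-correction step $P$ carries $x=(2p-1)j-m$ labels (even when $P$ is the first supernode, since its $s$ labels have been replaced by $x$), so the combined capacity of $Y$ and $P$ is $(j+m)+x=2pj\geq pj\geq D$. Hence no cascade escapes beyond $P$, and the total number of labels removed from $Y$ and its parent on account of attempts at $Y$ is exactly $D$. I expect the main obstacle to be precisely this cascade verification: one must track how cell $j$ of $Y$ depletes across the successive rounds $i=1,\ldots,p$ and ensure that the cascade never runs out of available labels inside $Y\cup P$. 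The key inequality $(j+m)+x\geq D$ is the linchpin, and it holds uniformly thanks to the choice $x=(2p-1)j-m$ and the bound $D\leq pj$.
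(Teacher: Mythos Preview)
Your argument is correct and is essentially the paper's own proof: both count, for each $i=1,\ldots,p$, the nonempty cells of $Y$ in the subtree $T(n-(2i-1)j)$ via the decomposition $\mu-(2i-1)j=2j(q-i+1)+r$, obtaining $j$ for $i\leq q$, $\min\{r,j\}$ for $i=q+1$, and $0$ thereafter. One small caveat on your added cascade check: the bound $(j+m)+x=2pj\geq D$ treats $P$ as dedicated to $Y$, but $P$ is shared with the sibling leaf; the paper handles this globally (in the description of the deletion step) with $2(j+m)+x=(2p+1)j+m>2pj$, which is the inequality you actually need.
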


\begin{proof}
If $\mu \leq j$ then $Y$ is empty in all of the subtrees $T(n-j),\ldots, T(n-(2p-1)j)$, so the number of labels removed from $Y$ during the pruning operation is $0$.

If $\mu > j$ and $\mu - j = q(2j) + r$ with $q > p-1$ then all cells of $Y$ are nonempty in the aforementioned
$p$ subtrees. In this case we have already seen earlier that the number of labels removed from $Y$ and possibly its parent is $pj$.

Finally, suppose $\mu > j$ and $\mu - j = q(2j) + r$ with $0 \leq r < 2j$ and $q \leq p-1$.
Then the $q$ subtrees $T(n-j), \ldots, T(n-(2q-1)j)$ contain all
cells of $Y$  as being nonempty (if $q=0$ then none do). The subtree $T(n-(2q+1)j)$ contains
only the first $\min \{r,j\}$ cells of $Y$ as being nonempty, and for $i>q$ each of the remaining subtrees
$T(n-(2i+1)j)$ contains $Y$ as an empty leaf. Therefore, the net number of labels deleted
from $Y$ and its parent in $T(n)$ during the pruning operation is $jq+ \min \{r,j\}$.
\end{proof}

Now we proceed with the case by case analysis promised above.
\paragraph{Case 1} Suppose label $n$ is situated in node $L$.

\textbf{Subcase 1a:} Label $n$ is the $l^{th}$ label in $L$ with $1 \leq l \leq j$.
Thus $L$ is the last nonempty node in $T(n)$ and it contains $l \leq j$ labels.
The deletion step does not affect any labels in $L$ since $L$ is empty in all the subtrees $T(n-j), \ldots, T(n-(2p-1)j)$. Therefore, in the lifting step the $l$ labels from $L$ are inserted into $P$ and in the end correction step the $x$ largest labels are removed from $P$. This leaves $P$ with $l$ labels in $T^*(n)$, as required.

\textbf{Subcase 1b:} Label $n$ is the $(j + l)^{th}$ label in $L$ with $1 \leq l \leq m$ (if $m=0$ then this case is not needed).
By using Lemma \ref{lem1} the number of labels removed from $L$ and its parent is $qj + \min \{r,j\}$, where
$l = q(2j) + r$. Thus, in the lifting step of the pruning operation $l + j - qj-\min \{r,j\}$ labels are lifted into $P$, while in the end correction step the $x$ labels with the largest labels are removed from $P$. We need to verify that
\begin{equation} j \leq l + j - qj-\min \{r,j\} \leq m+j \end{equation} to establish Lemmas \ref{claim3} and \ref{claim4}.
The first inequality follows from $l = q(2j) + r \geq qj + \min \{r,j\}$ and the second follows from $m \geq l$.\\

\paragraph{Case 2:} Suppose label $n$ is situated in node $R$. To establish Lemmas \ref{claim3} and \ref{claim4}
we must show that $P$ contains at least $j$ labels and at most $m+j$ labels after pruning $T(n)$.

\textbf{Subcase 2a:} Label $n$ is the $l^{th}$ label in $R$ with $l \leq j$.
Here no labels are removed from $R$ during the deletion step.

If $l \geq (2p-1)j-m$ then $L$ loses $pj$ labels during the deletion step by Lemma \ref{lem1}.
In this case the number of labels in $P$ after pruning $T(n)$ is $m + l - (p-1)j$ and we need to establish that
\begin{equation}j \leq m + l -(p-1)j \leq m+j\,.\end{equation}
Both of these are clear because $l + m \geq (2p-1)j \geq pj$ and $l \leq j \leq pj$.

In the case $l < (2p-1)j-m$, by Lemma \ref{lem1} the number of labels removed from $L$ after the deletion step is
$qj + \min \{r,j\}$ with $m + l = q(2j) + r$. Then we need to verify that
\begin{equation}j \leq m + j  + l - qj - \min\{r,j\} \leq m+j\,.\end{equation}
The first inequality follows from $m+l = q(2j)+r \geq qj + \min\{r,j\}$. The second one follows because
$l \leq qj + \min\{r,j\}$ if either $q > 0$ or $r \geq j$ due to $l \leq j$. Otherwise, $m + l = r$ and so $l \leq r = \min \{r,j\}$,
which is the required inequality.

\textbf{Subcase 2b:} Label $n$ is the $(j + l)^{th}$ label in $R$ with $1 \leq l < (2p-2)j-m$ (if the rightmost term is
non positive then this case is not needed). In this case both children of $P$ may lose less than $pj$ labels.
To account for the number of labels lost by $L$ and $R$ during the deletion step, we write $m+j+l = q_L(2j) + r_L$
and $l = q_R(2j) + r_R$ with $0 \leq r_L, r_R < 2j$. Then by Lemma \ref{lem1}, the number of labels removed from nodes
$L$ and $R$ are  $q_L j + \min \{r_L,j\}$ and $q_R j + \min \{r_R,j\}$, respectively. Note that $q_R \leq q_L$. To prove
Lemmas \ref{claim3} and \ref{claim4} we need to show that
\begin{equation} \label{eqn2b} j \leq m+j + l + j - (q_L j + \min \{r_L,j\} + q_R j + \min \{r_R,j\}) \leq m+j \,.\end{equation}

For the first inequality we use the fact that $m+j+l = q_L(2j) + r_L$ to reduce the inequality to
$\min\{r_L,j\} + \min \{r_R,j\} \leq (q_L-q_R)j + r_L$. If $q_L > q_R$ then the latter inequality follows easily.
Otherwise, $q_L = q_R$ and so $l = q_R(2j) + r_R$ implies that $ m + j + r_R = r_L$. Thus, $r_L \geq j$
and the latter inequality becomes $j + \min \{r_R, j\} \leq r_L = m + j +r_R$; this is clearly true.

Now we consider the second inequality in (\ref{eqn2b}). Here we substitute $q_L j$ and $q_R j$ with
the equivalent values $\frac{m+l+j-r_L}{2}$ and $\frac{l - r_R}{2}$ respectively. Then after some
simplification the inequality becomes
\begin{equation} \label{eqn2b'} j + r_R + r_L \leq m + 2 \min \{r_L,j\} + 2\min \{r_R,j\}.\end{equation}
If both the minimums on the right are $j$ then we get $ r_R + r_L \leq m + 3j$, which is true due to
$r_L < 2j$ and $r_R \leq m+j$. If the minimums are $r_L$ and $r_R$ (both less than $j$) then the inequality
holds unless $m < j$. But when $m < j$ we have $q_R = 0$ and $q_L \in \{0,1\}$ because $l + j < 2j$ and
$m + l + 2j < 4j$. If $q_L = 0$ then
$r_L = m + l + j \geq j$, contradicting that $r_L < j$. Thus, $q_L = 1$ and the second inequality in
(\ref{eqn2b}) becomes $r_L \geq 0$.

Finally, if one of the minimums is $r_L$ or $r_R$ and the other is $j$ then the inequality in (\ref{eqn2b'})
is trivial.

\textbf{Subcase 2c:} Label $n$ is the $(j + l)^{th}$ label in $R$ with $l \geq (2p-2)j - m$. By the choice of $l$, this case treats
the situation where node $L$ loses $pj$ labels. By Lemma \ref{lem1} node $R$ loses $qj + \min \{r,j\}$ labels where
$l = q(2j) + r$. After pruning $T(n)$ the number of labels in $P$ is $$m-(p-1)j + l + j - qj-\min \{r,j\}\,.$$

To prove Lemma \ref{claim4} to need to establish that $j \leq m-(p-1)j + l + j - qj-\min \{r,j\}$.
Using the assumption $(2p-2)j - m \leq l$ and the fact that $l \leq m$, we deduce that $m \geq (p-1)j$.
Also, we have that $l = q(2j) + r \geq qj + \min \{r,j\}$. Together, these two observations imply that
$m+l \geq (p-1)j + qj + \min \{r,j\}$. This is the desired inequality above after simplification.

Now we verify the upper bound $m-(p-1)j + l + j - qj  - \min \{r,j\} \leq m+j$ that is required for Lemma \ref{claim3}.
This is equivalent to $l \leq (p-1)j + qj + \min \{r,j\}\,$. Using $l = q(2j) + r$ our upper bound is equivalent to
$$l + r \leq (2p-2)j + 2\min \{r,j\}\,.$$ If this inequality fails and $\min \{r,j\} = r$ then we have $l > (2p-2)j + r$.
But since $l \leq m \leq (2p-1)j$ this contradicts $l \equiv r \,(\mod 2j)$. On the other hand if the inequality fails with
$\min \{r,j\} = j < r$, then since $r < 2j$ we have that $(2p-2)j < l \leq (2p-1)j$. This contradicts $\min \{r,j\} = j < r$
since then the remainder $r$ is less than or equal to $j$.\\

\paragraph{Case 3:} Suppose label $n$ is located after node $R$ and is the $l^{th}$ label after the final label
in $R$ (thus, we have the understanding that $l \geq 1$). Since $P$ is the last non leaf node in $T(n)$ we have $l < x$.
Also, as $l < x$, the node $R$ does not lose a full set of $pj$ labels. To establish Lemmas \ref{claim3} and \ref{claim4}
we must show that $P$ contains between $j$ and $j+m$ labels after pruning.

\textbf{Subcase 3a:} $1 \leq l < 2((p-1)j-m) \leq x$ (note that for this to happen we need $m < (p-1)j$).
In this case both children of $P$ may lose less than $pj$ labels. Note that if this case does occur then
subcase 2c will not because that requires $m \geq (p-1)j$.

By Lemma \ref{lem1}, if $2m + l + j = q_L(2j) + r_L$ with $0 \leq r_L < 2j$ then $L$ loses
$q_L j + \min \{r_L,j\}$ labels. Also, if $m+l = q_R(2j) + r_R$ with $0 \leq r_R < 2j$ then R
loses $q_Rj + \min \{r_R,j\}$ labels.
After the end correction step we remove $x - l$ labels from $P$. So the number of labels in $P$
after pruning is $2(m+j) - (q_L j + \min \{r_L,j\} + q_Rj + \min \{r_R,j\}) + l$. We need to show that
\begin{equation} \label{eqn3a} j \leq 2(m+j) - (q_L j + \min \{r_L,j\} + q_Rj + \min \{r_R,j\}) + l \leq m+j\,.\end{equation}
The first inequality is $(q_L + q_R)j + \min \{r_L,j\} + \min \{r_R,j\} \leq 2m + j+ l = q_L(2j) + r_L$.
We are done if $q_L > q_R$. Otherwise, since $q_L \geq q_R$ we must have $q_L = q_R$ and this implies $r_L = m + j + r_R \geq j$.
Then the inequality reduces to $j + \min \{r_R,j\} \leq r_L = m + j + r_R$, which is true.

Now we consider the second inequality in (\ref{eqn3a}). It reduces to
$m+j+l \leq (q_L + q_R)j + \min \{r_L,j\} + \min \{r_R,j\}$, which in turn is the same as
$$(q_R + 1)j + r_R \leq q_Lj + \min \{r_L,j\} + \min \{r_R,j\}\,.$$
If $q_R < q_L-1$ then we are done by trivial considerations. Suppose that $q_R = q_L-1$.
We also get the inequality above easily if either $\min \{r_R,j\} = r_R$, or $\min \{r_R,j\} = j$ and
$\min \{r_L,j\} = j$. So we can assume that $\min \{r_R,j\} = j$ and $\min \{r_L,j\} = r_L$. Then using
$m + l = (q_L-1)(2j) + r_R$ and $2m + j + l = q_L(2j) + r_L$, we deduce that $m = j + r_L - r_R$.
However, as $m \geq 0$ we conclude that $r_R \leq j + r_L$ as required by the inequality above.

If $q_R = q_L$ then we need to show that $j + r_R \leq \min \{r_L,j\} + \min \{r_R,j\}$.
Once again we have $r_L = m + j + r_R$, and so $\min \{r_L,j\} = j$. The above then
becomes $r_R \leq \min \{r_R, j\}$, but we do have $r_R \leq j$ for otherwise $r_L > 2j$.

\textbf{Subcase 3b:} $2((p-1)j-m) \leq l < x$. In this case $L$ loses all $pj$ labels, and $R$,
by Lemma \ref{lem1}, loses $qj + \min \{r,j\}$ labels where $m + l = q(2j) + r$. The total number of
labels in $P$ after pruning is $(m - (p-1)j) + (m + j - qj - \min \{r,j\}) + l\,$. Hence we need to show
\begin{equation} \label{eqn3b} j \leq 2m+ j+ l - (p-1)j - qj - \min \{r,j\} \leq m+j\,.\end{equation}

We consider the first inequality of (\ref{eqn3b}). After substituting $m + l = q(2j) + r$ and simplifying,
the lower bound in (\ref{eqn3b}) becomes $(p-1-q)j + \min \{r,j\} \leq m+r$. If $q = p-1$ then the
latter inequality is obvious. Suppose that $q < p-1$.

We know that $l \geq (2p-2)j-2m$, which implies that $q(2j)+r = m+l \geq (2p-2)j-m$.
Thus, $m+r \geq 2(p-1-q)j$. Since $p-1-q \geq 1$, we conclude that
$m+r \geq (p-1-q)j + j \geq (p-1-q)j + \min \{r,j\}$ as needed.

Now consider the second inequality in (\ref{eqn3b}). Substituting $m + l = q(2j) + r$ we
get that $$qj + r \leq (p-1)j + \min \{r,j\}\,.$$ As $q \leq p-1$ we are done if $q < p-1$ or $r \leq j$.
If $q=p-1$ and $r > j$, then it follows that all $j$ cells in $R$ are nonempty in
$T(n-(2p-1)j)$; a contradiction since $m + l < (2p-1)j$.

With this we have considered all cases and the proofs of Lemma \ref{claim3} and Lemma \ref{claim4} are now complete.
We conclude this section by considering the frequency sequence $\phi_{C_T}$ of the cell counting function of a fixed tree $T = T_{s,j,m,p}$.

\subsection{The frequency sequence} \label{sec3:frequency}

From \cite{ConollyLike} we know that the tree-based solution sequence of the $(\alpha, \beta)$ Conolly
recursion (\ref{eq:alphabeta1}) has frequency sequence $\alpha + \beta \phi_C$ where $\phi_C$ is the
frequency sequence of the Conolly sequence (\ref{eq:0112}). This is the linear combination
$\frac{\alpha}{2} \phi_H + \beta \phi_C$ of the frequency sequences of the $H$ sequence (\ref{eq:0123}) and
the Conolly sequence. Now the function $C_T$ with the choice of simultaneous parameters
$(s,j,m,p) = (0,j, (\alpha+\beta-1)j, \alpha/2 + \beta)$ gives (\ref{rec2}), which is (\ref{eq:alphabeta1}) with the
simultaneous parameter $j$. So it is natural to wonder whether the frequency sequence of $C_T$ with
the aforementioned choice of parameters is $\frac{\alpha}{2} \phi_{H_j} + \beta \phi_{C_j}$ where $H_j$ and $C_j$
are the tree-based solutions of (\ref{eq:0j2j3j}) and (\ref{eq:0jj2j}) respectively for $s=0$.

Using the results about the frequency sequences of (\ref{eq:0j2j3j}) and (\ref{eq:0jj2j}) from Theorems 5.1 and 5.5
of \cite{Rpaper} we can easily compute $\frac{\alpha}{2} \phi_{H_j} + \beta \phi_{C_j}$. If $\nu_2(v)$ is the
2-adic valuation of $v$ then $\phi_C(v) = \nu_2(v) + 1$ and

\begin{displaymath}
\frac{\alpha}{2} \phi_{H_j}(v) + \beta \phi_{C_j} (v) = \left \{
\begin{array}{lr}
\frac{\alpha}{2} + \beta & \text{if}\; j \nmid v\\
\beta j \cdot \nu_2(\frac{v}{j}) + \frac{\alpha}{2}(j+1) + \beta & \text{otherwise} \end{array}
\right.
\end{displaymath}

On the other hand, we can derive $\phi_{C_T}$ using an argument most similar to that of Theorem \ref{thm:mfreq}. The difference is that
the non leaf regular nodes now contain $x$ labels instead of $j-m$. For fixed $(s,j,m,p)$

\begin{displaymath}
\phi_{C_T} (v) = \left \{
\begin{array}{lr}
1& \text{if}\; j \nmid v\\
((2p-1)j-m) \cdot \nu_2(\frac{v}{j}) + 1+m + s\mathbf{1}_{[\frac{v}{j} \; \text{is a power of 2}]}& \text{otherwise} \end{array}
\right.
\end{displaymath}

From this it is easy to see that $\phi_{C_T} \neq \frac{\alpha}{2} \phi_{H_j} + \beta \phi_{C_j}$ when
$(s,j,m,p) = (0,j,(\alpha+\beta-1)j,\alpha/2 + \beta)$. In the next section we derive a 2-ary order $p$
recursion whose solution sequence does indeed have the frequency sequence
$\frac{\alpha}{2} \phi_{H_j} + \beta \phi_{C_j}$ and we give a tree-based proof of the derivation.
\end{section}


\begin{section}{Linear combinations of frequency sequences via tree superpositions} \label{sec:linearcomb}

In this section we use the tree-based methodology to derive a nested recursion whose solution has a frequency sequence that is a linear combination of the frequency sequences of $H_{0,j}(n)$ and $R_{0,j}(n)$ from (\ref{eq:0j2j3j}) and (\ref{eq:0jj2j}) respectively for $s=0$. Our strategy is to construct a labelled infinite binary tree whose cell counting function has the desired property and then use the tree along with a pruning operation to derive a nested recursion
with the same frequency function.

To motivate our construction we recall the trees whose cell counting functions satisfy recursions $R_{0,j}(n)$ and $H_{0,j}(n)$. The first tree, say $T_1$ with
cell counting function $R_{0,j}(n)$, is $T_{0,j,0}$ from Section \ref{sec:order2}, that is, the binary tree corresponding to $m=0$. Similarly, the tree with cell counting function $H_{0,j}(n)$ is $T_2 = T_{0,j,j}$ from Section \ref{sec:order2}.
To obtain a tree $T$ whose cell counting function has a frequency sequence $\frac{\alpha}{2} \phi_{H_{0,j}} + \beta \phi_{R_{0,j}}$ we form the ``superposition" of the two trees $T_1$ and $T_2$. That is, we place $\alpha / 2$ copies of $T_2$ and $\beta$ copies of $T_1$ on top of each other. Note that since $T_1$ and $T_2$ have the same skeleton, this superposition creates another infinite binary tree $T$ with the same skeleton (see Figure \ref{fig:skeleton}). When we superpose multiple copies of $T_1$ and $T_2$ we initially treat the labels in each tree as placeholders; as a result, at first the labels in the superposed tree $T$ do not appear in preorder and $T$ has multiple occurrences of the same label (see Figure \ref{fig:superposition}, where for simplicity we illustrate the superposition of a single copy of each tree). Once we relabel the tree $T$ in preorder it is evident that we obtain a tree whose cell counting function has frequency sequence that is the desired linear combination $\frac{\alpha}{2} \phi_{H_{0,j}} + \beta \phi_{R_{0,j}}$ of the cell counting functions for the individual trees. Note that in principle $\alpha$ can be negative; in this case the tree $T$ is well-defined so long as $(\alpha /2) (j+1) + \beta \geq 1$ (that is, we require at least one label in the last cell in each leaf of $T$).

\begin{figure}[htpb]
\begin{center}
\includegraphics[scale=0.14]{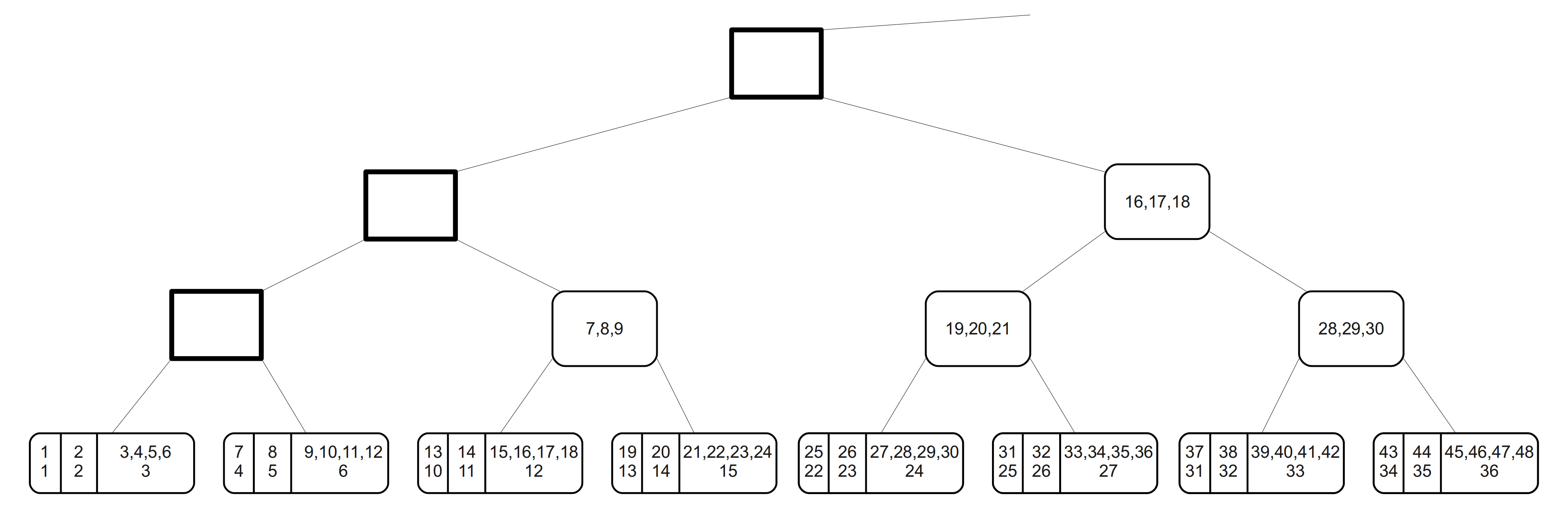}
\caption{Superposition of $T_{0,3,0}$ and $T_{0,3,3}$ prior to relabelling.} \label{fig:superposition}
\end{center}
\end{figure}

\subsection{The tree and the pruning operation} \label{sec4:pruning}

We now give a direct, more general construction of a tree $T$ whose cell counting function is, under certain conditions, the desired linear combination. This will allow us to solve not only a nested recursion whose solution has the desired frequency function, but also a wide spectrum of related recursions.

Fix simultaneous parameters $s,j,m,p$. The natural range of these parameters is discussed below. The desired tree $T = T_{s,j,m,p}$ has the skeleton of the infinite binary tree from Figure (\ref{fig:skeleton}) with $j$ cells in each leaf. For $n \geq 1$ let $T(n)$ denote $T$ with $n$ labels inserted in preorder as follows: each of the first $j-1$ cells of each leaf receives $p$ labels, while the last cell receives $p + m$ labels. All remaining regular nodes in $T$ get $pj - m$ labels each, and the supernodes receive $s$ labels each. See Figure \ref{fig:T87}, where we use the case $s = 0, j=m=3, p=2$ as our running example.

\begin{figure}[htpb]
\begin{center}
\includegraphics[scale=0.14]{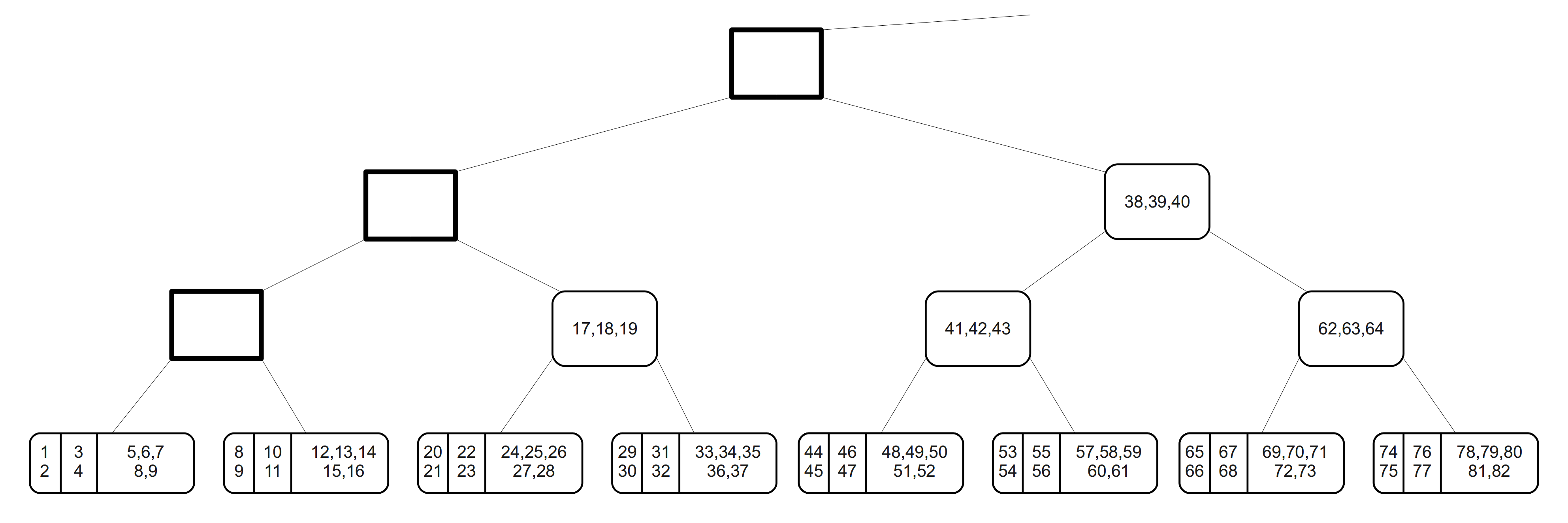}
\caption{The labelled infinite tree $T_{0,3,3,2}(82)$; $C_T(82)=24$.} \label{fig:T87}
\end{center}
\end{figure}

To ensure that each leaf has at least one cell and that cells have a positive number of labels, we require $p,j \geq 1$. Likewise, to force regular nodes and supernodes to contain a non-negative number of labels, we need $s \geq 0$ and $0 \leq m \leq pj$. Note that for negative values of $m$ such that $m > -p$, the tree $T$ is still well-defined. However, our proof here only works for non-negative $m$, so we restrict the range of this parameter accordingly. See Section \ref{sec:conclusion} for further discussion.

We let $C_T(n)$ denote the number of nonempty cells in $T(n)$. Since each cell has $p$ labels and each regular nodes contains $pj-m$ labels, the same argument as in the proof of Theorem \ref{thm:mfreq} yields that
\begin{displaymath}
\phi_{C_T} (v) = \left \{
\begin{array}{lr}
p& \text{if}\; j \nmid v\\
(pj-m) \cdot \nu_2(\frac{v}{j}) + p+m + s\mathbf{1}_{[\frac{v}{j} \; \text{is a power of 2}]}& \text{otherwise} \end{array}
\right.
\end{displaymath}

Observe that when $s = 0$ and $m = bj$ for some integer $b \geq 0$, the frequency sequence $\phi_{C_T} = b \phi_{H_{0,j}} + (p-b) \phi_{R_{0,j}}$. That is, for $m$ a multiple of $j$, the resulting tree $T$ is a superposition of trees $T_1$ and $T_2$ as discussed above. Note that the restriction $m \geq 0$ allows us to only produce the frequency functions which are linear combinations of $\phi_{H_{0,j}}$ and $\phi_{R_{0,j}}$ with non-negative coefficients. If $m$ is not a multiple of $j$ then the resulting tree is not a superposition of trees $T_1$ and $T_2$.

Let $\alpha, \beta \geq 0$, and set $j = 1$, $p = \alpha / 2 + \beta$ and $m = \alpha / 2$. Then $\phi_{C_T}$ is exactly $\frac{\alpha}{2} \phi_{H} + \beta \phi_{C}$, that is, $\phi_{C_T}$ is the frequency function of the solution for the recursion (\ref{eq:alphabeta1}). We now generalize this result by deriving a recursion whose solution has a frequency function that is a linear combination of the frequency functions for the solutions to $R_{0,j}$ and $H_{0,j}$.

In Section \ref{sec:order2} and \ref{sec:orderp} we have used pruning operations to show that a cell counting function is the solution to a nested recursion. Here we reverse our approach and use a pruning operation to derive a recursion whose solution is given by $C_T(n)$.

The two major requirements we place on the pruning operation is that the resulting tree $T^*(n)$ has the same skeleton as $T$ and that it conforms to the labelling rules described earlier. In particular, we would like our pruning operation to be defined in such a way that a ``typical" nonempty leaf of $T(n)$ loses $pj$ labels in the deletion step. In that case, $2m$ labels are lifted to its parent to bring the total count of labels in it to $pj+m$ after pruning (exactly the number of labels in a ``typical" nonempty leaf). Using this heuristic, and examples of pruning operations in Section \ref{sec:order2} and Section \ref{sec:orderp}, we define the following pruning operation on $T(n)$, $n > 5pj + 3m + 2s$ (the first seven nodes of $T$ are full). See Figures \ref{fig:s=0_p=2_j=m=3_initialcorrection}, \ref{fig:s=0_p=2_j=m=3_deletion}, \ref{fig:s=0_p=2_j=m=3_lifting}, \ref{fig:s=0_p=2_j=m=3_endcorrection} and \ref{fig:s=0_p=2_j=m=3_afterpruning}.

\begin{figure}[h]
 	\includegraphics[scale=.14]{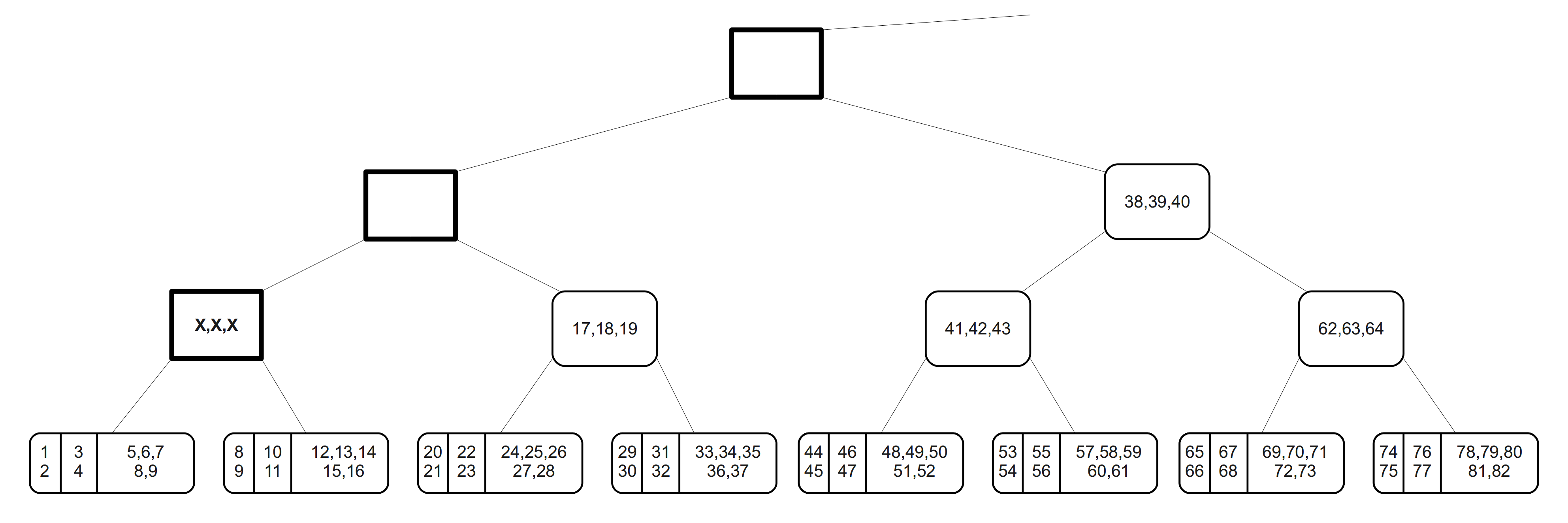}
 \caption{Initial correction step when pruning $T_{0,3,3,2}(82)$.}\label{fig:s=0_p=2_j=m=3_initialcorrection}
\end{figure}

\begin{figure}[h]
	\includegraphics[scale=.14]{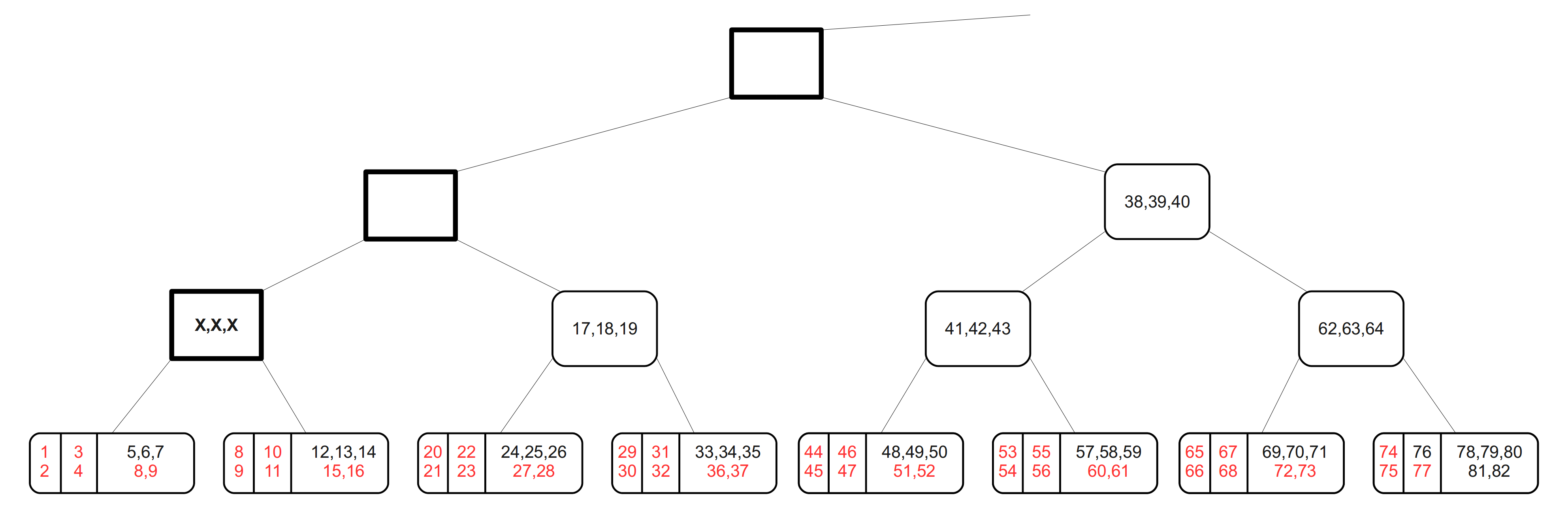}
\caption{Deletion step when pruning $T_{0,3,3,2}(82)$.}\label{fig:s=0_p=2_j=m=3_deletion}
\end{figure}

\begin{figure}[h]
	\includegraphics[scale=.14]{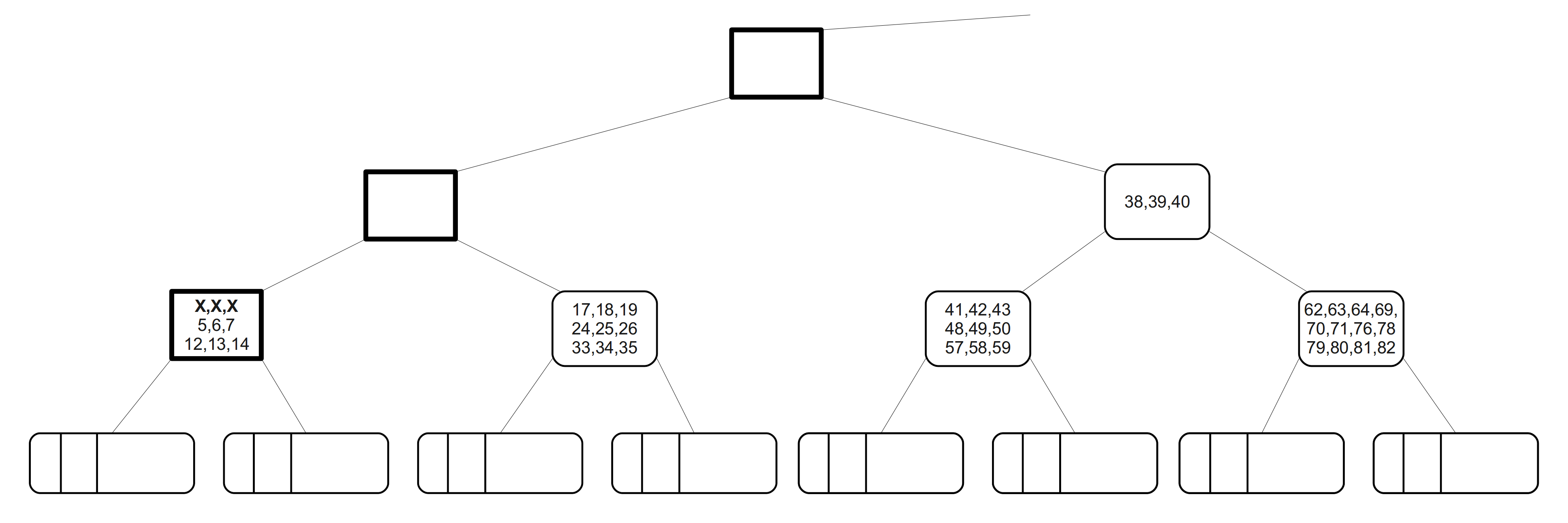}
  \caption{Lifting step when pruning $T_{0,3,3,2}(82)$.}\label{fig:s=0_p=2_j=m=3_lifting}
\end{figure}

\begin{figure}[h]
	\includegraphics[scale=.14]{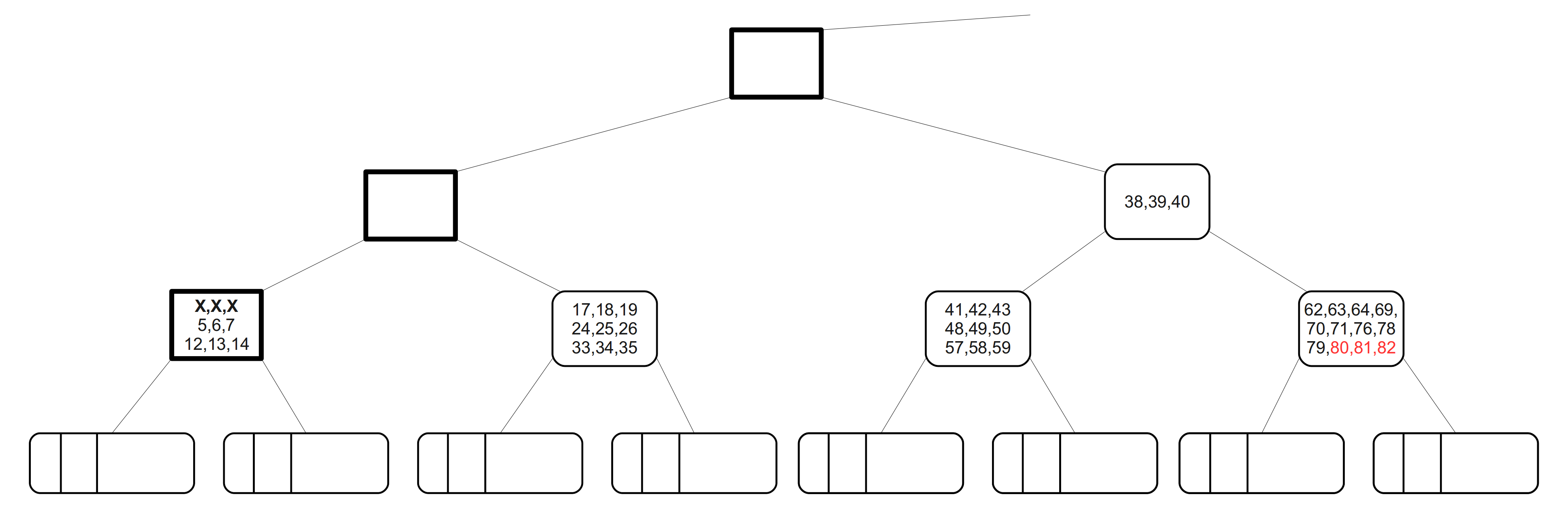}
  \caption{End correction when pruning $T_{0,3,3,2}(82)$.}\label{fig:s=0_p=2_j=m=3_endcorrection}
\end{figure}

\begin{figure}[h]
	\includegraphics[scale=.14]{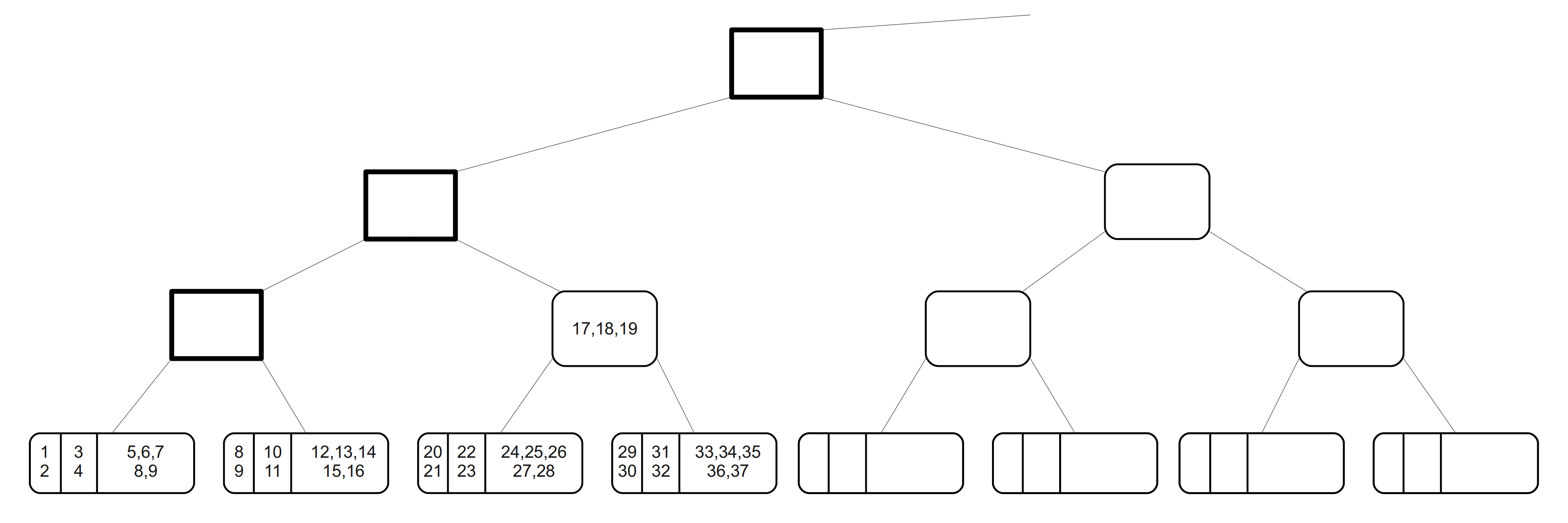}
  \caption{Relabelling step when pruning $T_{0,3,3,2}(82)$.}\label{fig:s=0_p=2_j=m=3_afterpruning}
\end{figure}

\begin{description}
\item[Initial correction] Remove the $s$ labels from the first supernode and insert $pj-m$ labels in the first supernode (these labels are currently placeholders only; we do not relabel the tree until the relabelling step).
\item[Deletion step] For each $i$, $1 \leq i \leq p$, consider the subtrees $T(n-(2i-1)-p(j-1))$. For each nonempty cell in the subtree $T(n-(2i-1)-p(j-1))$, delete a label from the corresponding cell in $T(n)$ (to be specific, we remove the largest label in the cell). Note that this is always possible because every nonempty cell has at least $p$ labels.
\item[Lifting step] Lift all the remaining labels from every nonempty cell of $T(n)$ into the parent of the leaf containing that cell. After this step all bottom level leaves of $T(n)$ become empty.
\item[End correction] Remove the largest $pj-m$ labels that remain in $T(n)$.
\item[Relabelling step] Remove all the leaves of $T(n)$ and relabel the new tree in preorder. Partition the new leaf labels into $j$ cells in the same manner as was done for the leaves of $T$ and denote this tree by $T^*(n)$.
\end{description}

The number of labels removed from $T(n)$ after the pruning operation is $s+\sum_{i=1}^p C_T(n-2i+1-p(j-1))$.
Thus, $T^*(n)$ contains $n-s-\sum_{i=1}^p C_T(n-2i+1-p(j-1))$ labels and it has the same skeleton as $T$, but it is not immediately obvious that $T^*(n)$ follows the labelling scheme defined earlier. In other words, we would like to establish the following lemma:

\begin{lemma}
The pruning of $T(n)$ results in a tree $T^*(n) = T(n-s-\sum_{i=1}^p C_T(n-(2i-1)-p(j-1)))$.
\label{lm:prunlincomb1}
\end{lemma}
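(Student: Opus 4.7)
The plan is to mirror the strategy of Lemmas~\ref{claim3} and~\ref{claim4}: first verify the total label count of $T^*(n)$, then check that $T^*(n)$'s labels are distributed according to the rules defining $T$, with the possible exception of the last non-empty node. The count is routine. The initial correction step changes the total by $pj-m-s$; the deletion step removes $\sum_{i=1}^p C_T(n-(2i-1)-p(j-1))$ labels, one per nonempty cell of each subtree $T(n-(2i-1)-p(j-1))$ (always possible since each cell of $T(n)$ holds at least $p$ labels and can be reduced at most once per subtree); the lifting step preserves the count; and the end correction strips a further $pj-m$ labels. Hence $T^*(n)$ has exactly $n-s-\sum_{i=1}^p C_T(n-(2i-1)-p(j-1))$ labels and its skeleton agrees with that of $T$, so it suffices to verify the labelling rules.

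The key structural observation, as in Section~\ref{sec:orderp}, is that every node of $T(n)$ strictly above the penultimate level is untouched by the deletion and lifting steps, so it retains its original label count ($s$ for a supernode, $pj-m$ for any other regular node), the sole caveat being that a few tail nodes can be trimmed by end correction. The nontrivial cases are the first supernode $F$ of $T(n)$, every non-first penultimate node of $T(n)$ (all of which become leaves of $T^*(n)$), and the nodes at the very tail. Since $n>5pj+3m+2s$, both children of $F$ are full leaves and are fully contained in each subtree $T(n-(2i-1)-p(j-1))$ for $i=1,\ldots,p$. So deletion removes $p$ labels from each cell of both children, leaving $m$ labels in the last cell of each child and none elsewhere; lifting then deposits these $2m$ labels into $F$, which already holds the $pj-m$ placeholders from initial correction; and end correction does not reach $F$ because the labels it removes are at the tail of $T(n)$ in preorder. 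Thus $F$ ends pruning with exactly $pj+m$ labels, and after relabelling and cell partitioning this is precisely the distribution $p,p,\ldots,p,p+m$ required of a full leaf of $T^*(n)$. For any non-first penultimate node $P$ whose right child is full in $T(n)$ and that is followed in preorder by a completely labelled non-leaf regular node, the same counting argument — the direct analogue of Lemma~\ref{lem2} — shows that $P$ also finishes pruning with $pj+m$ labels, hence becomes a full leaf of $T^*(n)$.

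The hard part, and the main obstacle, is the case analysis near the last non-empty node, where the deletion, lifting and end-correction steps can all interact with the same node. The plan is to follow the template of Section~\ref{sec3:proof}: first state a technical lemma in the spirit of Lemma~\ref{lem1} that, for a late leaf $Y$ holding exactly $\mu$ labels at or after $Y$ in $T(n)$, expresses the number of labels removed from the cells of $Y$ during the deletion step as an explicit function of $\mu,p,j,m$. Because the deletion rule here is cell-based rather than leaf-based (with overflow), this count should in fact be cleaner than its Section~\ref{sec:orderp} analogue, reducing to counting how many of the $p$ subtrees $T(n-(2i-1)-p(j-1))$ reach each individual cell of $Y$. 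Then I would split into cases according to whether the label $n$ lies in the left child, in the right child, or strictly after the right child of the last non-empty penultimate node $P$, and in each case compute the net number of labels remaining in $P$ (and in any non-leaf regular node following $P$ in preorder) after pruning as an explicit arithmetic expression in $n,p,j,m,s$. The verification required in each subcase is that the resulting count lies in the admissible range ($[0,pj+m]$ for a node that becomes a leaf of $T^*(n)$, $[0,pj-m]$ for a non-leaf regular node, and $[0,s]$ for any near-tail supernode). The most delicate point is tracking, in the subcases where $n$ sits very close to the end of $T(n)$, whether the $pj-m$ labels removed during end correction are absorbed entirely within the last non-empty non-leaf regular node or spill over into nodes that become leaves of $T^*(n)$, since this choice determines how the last non-empty leaf of $T^*(n)$ distributes its surviving labels across its $j$ cells.
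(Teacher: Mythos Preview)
Your overall architecture is sound, and the routine portions --- the label count, the treatment of nodes strictly above the penultimate level, the first supernode, and the ``generic'' penultimate nodes via a Lemma~\ref{lem2} analogue --- match the paper's own argument in Section~\ref{sec4:proof}. Where you diverge is the tail analysis. You propose to import the Section~\ref{sec3:proof} template verbatim: a Lemma~\ref{lem1}-style deletion count followed by a case split on the position of the label $n$ (left child, right child, or beyond the right child of the last nonempty penultimate node $P$). The paper instead reparameterizes by $l_P$, the number of labels in $T(n)$ lying strictly after $P$ in preorder, and packages the entire deletion count for a child of $P$ into a single function
\[
d(l)=\sum_{i=1}^p \min\!\left(j,\ \left\lceil \tfrac{l-p(j-1)-2i+1}{p}\right\rceil \cdot \mathbf{1}_{[l-p(j-1)-2i+1>0]}\right).
\]
This yields the closed form $f(l)=l-d(l)-d(l-pj-m)$ for the number of labels surviving in $P$ after pruning (Lemma~\ref{lm:limcombcount}), and the whole tail case reduces to the single pair of inequalities $p(j-1)+1\le f(l)\le pj+m$ (Lemma~\ref{lm:fbounds}), which is proved by analysing the difference sequence of $d$ (Lemma~\ref{lm:dbounds}) rather than by enumerating positions of $n$. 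Your route would work --- the cell-based deletion rule here is indeed cleaner than Section~\ref{sec:orderp}'s leaf-with-overflow rule, so the subcase arithmetic is tamer --- but it multiplies cases, and since you have not actually carried them out the proposal remains a plan at its most delicate point. The paper's $d$--$f$ machinery trades the position-of-$n$ casework for a short monotonicity/alternation argument on one integer-valued function, which is tidier and simultaneously delivers the lower bound $f(l)\ge p(j-1)+1$ needed for Lemma~\ref{lm:prunlincomb2}.
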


To derive a recursion satisfied by $C_T(n)$ we will also need to establish a bijective correspondence between the cells of $T^*(n)$ and the cells of left leaves of $T(n)$:

\begin{lemma}
Let $P$ be a nonempty leaf in $T^*(n)$ (so that $P$ is a penultimate level node in $T(n)$). Then the number of nonempty cells of $T^*(n)$ in $P$ is equal to the number of nonempty cells of the left child of $P$ in $T(n)$.
\label{lm:prunlincomb2}
\end{lemma}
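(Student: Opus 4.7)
I plan to prove Lemma \ref{lm:prunlincomb2} in parallel with Lemma \ref{lm:prunlincomb1}, following closely the joint strategy used for Lemmas \ref{claim3} and \ref{claim4} in Section \ref{sec:orderp}. The guiding observation is the same: for a ``typical'' penultimate node $P$ of $T(n)$ with both children $L$ and $R$ full, each of $L$ and $R$ loses exactly $pj$ labels in the deletion step (one label per cell across the $p$ subtrees $T(n-(2i-1)-p(j-1))$, since every cell in the first $j-1$ positions has exactly $p$ labels), so the lifting step deposits the remaining $2m$ labels into $P$. This brings $P$'s count from $pj-m$ up to $pj+m$, making it a full leaf of $T^*(n)$ with $j$ nonempty cells and matching the $j$ nonempty cells of $L$ in $T(n)$.

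To make this precise, I would first prove an analogue of Lemma \ref{lem2}: if a completely filled non-leaf regular node follows $R$ in $T(n)$, then $P$ contains exactly $pj+m$ labels as a leaf of $T^*(n)$. The argument parallels Lemma \ref{lem2} --- a full regular node $Q$ beyond $R$ guarantees enough ``buffer'' labels so that $L$ and $R$ are each fully contained in every subtree $T(n-(2i-1)-p(j-1))$, and also so that the $pj-m$ labels removed in the end-correction step are drawn from $Q$ or later rather than from $P$, $L$, or $R$. This single observation immediately resolves Lemmas \ref{lm:prunlincomb1} and \ref{lm:prunlincomb2} for every penultimate node except the last one in $T(n)$ that is both filled and has a nonempty left child.

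The main obstacle is thus the boundary case, in which $P$ is this last filled non-leaf penultimate node with nonempty left child. Here I would proceed by case analysis on the position of label $n$, mirroring Cases 1, 2, and 3 of Section \ref{sec:orderp}: $n$ lies inside $L$, inside $R$, or strictly after $R$. Each case splits into subcases according to how many of the $p$ subtrees $T(n-(2i-1)-p(j-1))$ fully contain the relevant child, and how many only partially do so. I would formulate a counting lemma analogous to Lemma \ref{lem1}, giving the exact number of labels removed from a leaf $Y$ in the deletion step as a function of $\mu$, the number of labels of $T(n)$ at or after $Y$; the change from Lemma \ref{lem1} is that the thresholds $(2i-1)j$ are replaced by the values $(2i-1)+p(j-1)$ appearing in the current deletion pattern, and that each nonempty cell contributes $p$ labels (not $1$) to the capacity of $Y$.

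With this counting lemma in hand, each subcase reduces to a pair of inequalities bounding the net number of labels in $P$ after pruning: a lower bound to guarantee that $P$ has at least as many nonempty cells as $L$ has in $T(n)$, and an upper bound of $pj+m$ to ensure that $T^*(n)$ conforms to the labelling rules (thereby also finishing Lemma \ref{lm:prunlincomb1}). After substitutions of the form $\mu = q(2j)+r$, these bounds should reduce to elementary inequalities that follow from $0 \leq m \leq pj$, $p,j \geq 1$, $s \geq 0$, and the hypothesis $n > 5pj+3m+2s$ which forces the first seven nodes of $T$ to be full. The bookkeeping is somewhat more delicate here than in Section \ref{sec:orderp} because the initial-correction step replaces $s$ labels by $pj-m$ rather than $x=(2p-1)j-m$, because the deletion pattern uses the shifts $(2i-1)+p(j-1)$ rather than $(2i-1)j$, and because cells now carry multiple labels; but the structural ideas and organization of cases carry over directly, and I expect the bulk of the work to be the tedious but routine arithmetic in this final stage.
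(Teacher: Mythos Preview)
Your overall plan is sound and would succeed: proving Lemmas \ref{lm:prunlincomb1} and \ref{lm:prunlincomb2} jointly, disposing of the generic penultimate nodes via an analogue of Lemma \ref{lem2}, and then handling the last filled penultimate node $P$ by counting how many labels each of $L,R$ loses. However, the paper organizes this last step rather differently from your Section~\ref{sec:orderp}-style case analysis. Instead of splitting on whether $n$ lies in $L$, in $R$, or after $R$, the paper first proves a single explicit formula (Lemma \ref{lm:limcombcount}): if $l=l_P$ is the number of labels after $P$ in $T(n)$, then after pruning $P$ carries exactly $f(l)=l-d(l)-d(l-pj-m)$ labels, where $d(l)=\sum_{i=1}^p \min\big(j,\lceil h(l,2i)/p\rceil\cdot\mathbf{1}_{[h(l,2i)>0]}\big)$ with $h(l,x)=l-p(j-1)+1-x$. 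It then reduces both lemmas to the two-sided bound $p(j-1)+1\le f(l)\le pj+m$ (Lemma \ref{lm:fbounds}), and proves this by analysing the difference sequence of $d$ (Lemma \ref{lm:dbounds}): one shows that as $l$ increases, exactly one (for $p$ odd) or alternately zero and two (for $p$ even) of the ceilings $\lceil h(l,2i)/p\rceil$ jump, so the growth of $d$ is tightly controlled. This monotonicity argument replaces your three-case split entirely.

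One concrete caution about your route: the substitution $\mu=q(2j)+r$ is tailored to Section~\ref{sec:orderp}, where consecutive deletion shifts $(2i-1)j$ differ by $2j$ and cells carry one label. Here the shifts $(2i-1)+p(j-1)$ differ by $2$ while cells carry $p$ labels, so the relevant arithmetic is mod $p$, not mod $2j$; the analogue of Lemma \ref{lem1} you write down will in fact be the function $d(l)$ above, and the inequalities you need will not simplify via a $q(2j)+r$ decomposition. Your case analysis can still be pushed through, but the paper's functional formulation via $d$ and $f$ packages the bookkeeping more cleanly and avoids repeating essentially the same inequality across several subcases.
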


\subsection{The main theorem} \label{sec4:main result}

Once we establish Lemmas \ref{lm:prunlincomb1} and \ref{lm:prunlincomb2} we can derive a nested recursion as follows. Let $C_{T,L}(n)$ and $C_{T,R}(n)$ be the number of nonempty cells in $T(n)$ that are located in the left and right leaves respectively. Thus $C_T(n) = C_{T,L}(n) + C_{T,R}(n)$. Since there is a bijection between nonempty cells in the right leaves of $T(n)$  and nonempty cells in the left leaves of $T(n - pj -m)$, we have that $C_{T,R}(n) = C_{T,L}(n-pj-m)$. Therefore, by Lemma \ref{lm:prunlincomb1} and \ref{lm:prunlincomb2}
\begin{eqnarray*}
C_{T}(n) &=& C_T\left( n - s - \sum_{i=1}^p C_T(n-(2i - 1) - p(j-1))\right)\\
         &+& C_T\left(n-s-pj-m-\sum_{i=1}^p C_T(n-(2i-1)-m-p(2j-1))\right)
\end{eqnarray*}
for $n > 5pj + 3m + 2s$. That is, Lemma \ref{lm:prunlincomb1} and \ref{lm:prunlincomb2} together establish the following result:

\begin{theorem}
The cell counting function $C_T(n)$ satisfies the 2-term order $p$ nested recursion:

\begin{eqnarray}
R(n) &=& R \left( n - s - \sum_{i=1}^p R(n-(2i - 1) - p(j-1))\right) \label{eqn3} \\
&+& R \left(n-s-pj-m-\sum_{i=1}^p R(n-(2i-1)-m-p(2j-1)\right)\,. \notag{} \end{eqnarray}

In particular, recursion $(\ref{eqn3})$ generates the cell counting function $C_T(n)$ if it is given $5pj + 3m + 2s$
initial conditions (every node until the right child of the first regular node is full) that agree with the cell counting function.
\label{thm:lincomb}
\end{theorem}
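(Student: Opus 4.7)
The plan is to reduce Theorem \ref{thm:lincomb} to the two already-stated Lemmas \ref{lm:prunlincomb1} and \ref{lm:prunlincomb2}. Writing $C_T(n) = C_{T,L}(n) + C_{T,R}(n)$, the identity $C_{T,R}(n) = C_{T,L}(n - pj - m)$ follows because every full leaf of $T$ contains $pj + m$ labels, with any first-leaf discrepancy absorbed by the hypothesis $n > 5pj + 3m + 2s$ that puts us past the first two penultimate nodes. Combined with Lemmas \ref{lm:prunlincomb1} and \ref{lm:prunlincomb2}, this identifies $C_{T,L}(n)$ with the first term of (\ref{eqn3}); applying the same reasoning to $T(n - pj - m)$ then produces the second term. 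Thus the entire content of the theorem lies in proving the two lemmas.

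I would prove Lemmas \ref{lm:prunlincomb1} and \ref{lm:prunlincomb2} simultaneously, by a case analysis on the position of the label $n$ in $T(n)$, closely paralleling the argument in Section \ref{sec3:proof}. The key technical ingredient replacing Lemma \ref{lem1} is this: since each non-last cell holds exactly $p$ labels, each deletion pass removes at most one label per cell, and the $p$ cutoffs $n - (2i-1) - p(j-1)$ form an arithmetic progression with common difference $2$, the number of labels deleted from a given cell equals exactly the number of those $p$ subtrees in which the cell is nonempty. In particular, for a full leaf $Y$ whose first label $L$ satisfies $L \leq n - 2pj + 1$, every cell is nonempty in every subtree, so every cell loses exactly $p$ labels; thus $pj$ labels are deleted from $Y$, leaving the $m$ labels in its last cell to be lifted into the parent.

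This observation immediately handles the interior parts of both lemmas. An interior penultimate node $P$ retains its original $pj - m$ labels, gains $m$ labels from each of its two full children via lifting, and so becomes a leaf of $T^*(n)$ with exactly $pj + m$ labels, as the labelling rules require; the first supernode is balanced by the initial and end corrections; and every node strictly above the penultimate level is untouched by the pruning. The nonempty-cell count of such a $P$ in $T^*(n)$ then matches that of its full left child in $T(n)$, giving the interior part of Lemma \ref{lm:prunlincomb2}. The delicate work concerns the last nonempty penultimate node $P$ of $T(n)$, where I would split into three cases following Section \ref{sec:orderp}: (i) $n$ lies in the left child $L$ of $P$; (ii) $n$ lies in the right child $R$; (iii) $n$ lies beyond $R$ but still within $P$'s block of labels. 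In each case I would compute the net label count on $P$ after pruning and verify simultaneously that it lies in $[0, pj + m]$ (for Lemma \ref{lm:prunlincomb1}) and that the resulting number of nonempty cells of $P$ in $T^*(n)$ equals that of $L$ in $T(n)$ (for Lemma \ref{lm:prunlincomb2}).

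The main obstacle I expect is the modular arithmetic in these boundary subcases, direct analogues of (\ref{eqn2b}), (\ref{eqn3a}) and (\ref{eqn3b}). Two features make the bookkeeping subtler than in Section \ref{sec:orderp}: each cell now holds $p$ labels rather than a single label, so the count of labels removed from a single cell can take any value in $\{0, 1, \ldots, p\}$; and the cutoffs have common difference $2$ rather than $2j$, which changes how they interact with cell boundaries inside a single leaf. Nevertheless, with the analogue of Lemma \ref{lem1} in hand, the casework should reduce to a small number of explicit quotient-remainder inequalities in the offset of $n$, each routine if tedious. Once these are dispatched, Lemmas \ref{lm:prunlincomb1} and \ref{lm:prunlincomb2} follow, and with them Theorem \ref{thm:lincomb}.
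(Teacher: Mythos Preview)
Your reduction of Theorem \ref{thm:lincomb} to Lemmas \ref{lm:prunlincomb1} and \ref{lm:prunlincomb2} via the left/right split $C_T(n)=C_{T,L}(n)+C_{T,R}(n)$ and the shift $C_{T,R}(n)=C_{T,L}(n-pj-m)$ is exactly what the paper does in Section \ref{sec4:main result}. The difference is in how the two lemmas are proved.

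You propose to mimic Section \ref{sec3:proof}: split on whether the label $n$ sits in $L$, in $R$, or beyond $R$, and in each subcase verify quotient--remainder inequalities analogous to (\ref{eqn2b}), (\ref{eqn3a}), (\ref{eqn3b}). The paper takes a different route. It parametrizes everything by the single integer $l_P$ (the number of labels strictly after $P$ in $T(n)$) and packages the deletion count into a function
\[
d(l)=\sum_{i=1}^p \min\Bigl(j,\ \Bigl\lceil \tfrac{h(l,2i)}{p}\Bigr\rceil\cdot \mathbf{1}_{[h(l,2i)>0]}\Bigr),
\]
so that after pruning $P$ holds $f(l)=l-d(l)-d(l-pj-m)$ labels (Lemma \ref{lm:limcombcount}); this $d$ is precisely the explicit form of the ``analogue of Lemma \ref{lem1}'' you allude to. The paper then proves a structural fact about $d$ itself (Lemma \ref{lm:dbounds}): by tracking which of the arguments $h(l,2),h(l,4),\ldots,h(l,2p)$ is divisible by $p$ as $l$ increments, one finds that $d(l+1)-d(l)$ is identically $1$ when $p$ is odd and alternates between $0$ and $2$ when $p$ is even. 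From this growth description the needed bounds $p(j-1)+1\le f(l)\le pj+m$ (Lemma \ref{lm:fbounds}) follow without any positional case split; the lower bound simultaneously delivers Lemma \ref{lm:prunlincomb2}, since $f(l)\ge p(j-1)+1$ forces all $j$ cells of $P$ to be nonempty in $T^*(n)$, matching the fully occupied left child.

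Your approach should also succeed and is the natural extrapolation from Section \ref{sec:orderp}, but the casework is likely to be heavier here for the reasons you anticipate: the cutoffs are spaced by $2$ rather than $2j$, and cells carry $p$ labels, so two moduli interact. The paper trades that positional casework for a short parity argument on $d$, which is cleaner and also makes the dependence on the parity of $p$ transparent.
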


\subsection{Proof of Theorem \ref{thm:lincomb}} \label{sec4:proof}

We now proceed with the proof of Lemmas \ref{lm:prunlincomb1} and \ref{lm:prunlincomb2}. The trees $T^*(n)$ and $T(n-s-\sum_{i=1}^pR(n-2i+1-p(j-1))$ contain the same number of labels and have the same skeleton. Therefore, to establish the desired results we need to show that the nodes of $T^*(n)$ contain the ``correct" number of labels.

As in Sections \ref{sec:order2} and \ref{sec:orderp}, we can think of every node in $T^*(n)$ as being part of $T(n)$. By the design of the pruning operation on $T(n)$, all nodes in $T^*(n)$, except for the leaves of $T^*(n)$ (which are the former penultimate nodes of $T(n)$) and the last nonempty node (which may or may not be a leaf of $T^*(n)$), contain the same number of labels as they do in $T(n)$. So to prove Lemma \ref{lm:prunlincomb1} we need only focus on the leaves of $T^*(n)$ and its last nonempty node.

Consider first the last nonempty node of $T^*(n)$. Suppose that it is \emph{not} a leaf of $T^*(n)$. Then in $T(n)$ this node is neither a leaf nor a penultimate node. Therefore the pruning operation on $T(n)$ doesn't add any labels to this node (the end correction step of the pruning may remove some labels). After pruning, this last nonempty node in $T^*(n)$ has at most the same number of labels that it has in $T(n)$, which is what we require.

To count the number of labels that remain in leaves of $T^*(n)$, we have the following lemma.

\begin{lemma}
Let $P$ be a nonempty penultimate level node in $T(n)$ and let $l_P$ be the number of labels in nodes of $T(n)$ after $P$ (in preorder).
\begin{enumerate}
\item If $l_P = 0$ then $P$ is empty in $T^*(n)$.
\item If $1 \leq l_P \leq 3pj + m$ then $P$ contains
\begin{align*}
l_P -& \sum_{i=1}^p \min\left(j, \left\lceil \frac{l_P-p(j-1)-2i+1}{p} \right\rceil \cdot \mathbf{1}_{[l_P-p(j-1)-2i+1 > 0]} \right)\\
  -& \sum_{i=1}^p \min\left(j, \left\lceil \frac{l_P -p(2j-1)-m-2i+1}{p} \right\rceil \cdot \mathbf{1}_{[l_P -p(2j-1)-m-2i+1 > 0]} \right)
\end{align*}
labels in $T^*(n)$.
\item If $l_P > 3pj + m$ then $P$ contains $pj+m$ labels in $T^*(n)$.
\end{enumerate}
\label{lm:limcombcount}
\end{lemma}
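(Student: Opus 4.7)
The plan is to handle the three cases of the lemma separately, with cases (1) and (3) following from direct adaptations of earlier arguments and case (2) requiring a careful cell-counting computation combined with a structural analysis of the end correction step. Throughout I write $L$ and $R$ for the left and right children of $P$, $|L|$ and $|R|$ for their label counts in $T(n)$, and $D_L, D_R$ for the numbers of labels removed from $L$ and $R$ respectively during the deletion step, so that after lifting $P$ holds $pj - m + (|L| - D_L) + (|R| - D_R)$ labels.

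Case (1) with $l_P = 0$ is immediate: nothing in $T(n)$ lies after $P$, so $L$ and $R$ are empty, no labels are deleted or lifted, and the $pj - m$ labels removed by end correction are exactly the $pj - m$ original labels of $P$ (being the largest values remaining). Case (3) with $l_P > 3pj + m$ is the direct analog of Lemma \ref{lem2} of Section \ref{sec:orderp}: the leaves $L, R$ and the first non-leaf node $Q$ following $R$ are all full, so $L$ and $R$ each lose the maximum of $pj$ labels during the deletion step, lifting moves exactly $2m$ labels into $P$, and the end correction removes labels from $Q$ (or later) rather than from $P$. Thus $P$ ends with $pj - m + 2m = pj + m$ labels.

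For case (2) the first step is to derive the expressions for $D_L$ and $D_R$. Since the labels in $L$ occupy preorder positions $n - l_P + 1, \ldots, n - l_P + |L|$ and are distributed among cells of sizes $p, p, \ldots, p, p + m$, a direct calculation shows that the number of cells of $L$ that are nonempty in the subtree $T(n - (2i-1) - p(j-1))$ is $\min(j, \lceil (l_P - p(j-1) - 2i + 1)/p \rceil)$ when $l_P - p(j-1) - 2i + 1 > 0$, and is zero otherwise. Summing over $i = 1, \ldots, p$ gives the first sum in the statement, which equals $D_L$; an analogous calculation for $R$ (whose labels begin $|L|$ positions later, introducing an offset of $pj + m$) yields the second sum, equal to $D_R$.

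The remaining and main obstacle is to determine how many of the $pj - m$ labels removed during end correction come from $P$. Set $l_{\text{after }R} := l_P - |L| - |R|$, so $0 \leq l_{\text{after }R} \leq pj - m$ under the hypothesis $l_P \leq 3pj + m$. The key structural observation is that the first node past $R$ in preorder is never a supernode: supernodes lie only on the leftmost spine of $T$, and once preorder has exited the subtree rooted at $P$ it never returns to that spine. Hence that first node has $pj - m$ slots, and since $l_{\text{after }R} \leq pj - m$ all past-$R$ labels reside in this single non-leaf node with its leaf descendants empty; in particular, past-$R$ labels are untouched by the deletion and lifting steps. By preorder they are exactly the top $l_{\text{after }R}$ values in $T(n)$, and all other labels with value in the top $pj - m$ come from $L$, $R$, or $P$ itself and therefore lie in $P$ after lifting. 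Consequently the end correction removes precisely $pj - m - l_{\text{after }R}$ labels from $P$, leaving $P$ with $pj - m + (|L| - D_L) + (|R| - D_R) - (pj - m - l_{\text{after }R}) = l_P - D_L - D_R$ labels in $T^*(n)$, as claimed.
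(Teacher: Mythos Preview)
Your decomposition into the three cases and your handling of cases (1) and (3) match the paper's proof essentially verbatim. Your computation of $D_L$ and $D_R$ in case (2) is also the same as the paper's (the paper packages these two sums into the function $d(l)$ and $d(l-pj-m)$), and your final accounting---that end correction removes exactly $pj-m-l_{\text{after }R}$ labels from $P$---is the paper's conclusion as well.

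There is one factual slip in your case (2) justification. Your ``key structural observation'' that the first node past $R$ is never a supernode is false: whenever $R$ is the $2^b$-th leaf (equivalently $P$ is the rightmost penultimate node under the $b$-th supernode), the node immediately following $R$ in preorder is supernode $b{+}1$, which carries $s$ slots rather than $pj-m$. This situation does arise in case (2); for example, take $P$ to be the second penultimate node (so $R$ is leaf $4$) with $m<pj$ and $n$ in the range $(5pj+3m+2s,\,6pj+2m+2s]$. The reasoning ``once preorder has exited the subtree rooted at $P$ it never returns to that spine'' is simply wrong. Your conclusion survives anyway: since $P\neq$ first supernode in case (2), we have $b\geq 2$, so the node following supernode $b{+}1$ is a regular node at height $b\geq 2$ with $pj-m$ slots; together these two non-leaf nodes hold all $l_{\text{after }R}\leq pj-m$ past-$R$ labels, their leaf descendants are empty, and nothing is deleted from or lifted into them. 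The paper glosses over this same point, simply asserting that the $l_2$ past-$R$ labels are those removed during end correction.
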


To simplify the notation, let $h(k, x) = k - p(j-1) + 1 - x$ and
\begin{align*}
d(l) = \sum_{i=1}^p \min\left(j, \left\lceil \frac{h(l, 2i)}{p} \right\rceil \cdot \mathbf{1}_{[h(l, 2i) > 0]} \right)
\end{align*}
Also, when there is no confusion we write $l$ instead of $l_P$. Note that the expression in (2) above reduces to $l-d(l)-d(l-pj-m)$. Further, the deletion step of the pruning operation can now be rephrased in terms of the subtrees $T(h(n, 2)), T(h(n, 4)), \ldots, T(h(n, 2p))$.

\begin{proof}
To prove (1), we note that if all the nodes of $T(n)$ after $P$ are empty and $P$ is not, then $P$ contains the largest label in $T(n)$. During the end correction step of the pruning the $pj - m$ largest labels are removed from the tree. Since $P$ contains at most $pj - m$ labels, it is emptied by the pruning operation.

Next we prove (3). Let $a = n-l$. Then
\begin{align*}
n - p(j-1) - (2p-1) &= (l - p(j-1) - (2p-1)) + a = h(l, 2p) + a\\
                            & > h(3pj + m, 2p) + a = pj + m + p(j-1) + 1 + a
\end{align*}
since $l > 3pj + m$. Thus the label $n - p(j-1) - (2p-1)$ is no further back in the tree then the last label of the right child of $P$. In other words, every cell of left and right child of $P$ is nonempty in each of
\begin{align*}
T(h(n, 2)), T(h(n, 4)), \ldots, T(h(n, 2p))
\end{align*}
Therefore, each cell in the children of $P$ will lose exactly $p$ labels and $2m$ labels will be lifted to $P$ on the lifting step. Note that none of the labels are removed from $P$ in the end correction step of the pruning operation since there are at least $pj-m$ labels in nodes of $T(n)$ after the children of $P$. Thus, $P$ has $pj - m + 2m$ labels after pruning.

Now we prove (2). Let $L$ and $R$ be the left and right child of $P$ respectively. Recall that we assume here that $1 \leq l \leq 3pj + m$. Note that if $h(l, 2i) \leq 0$ for some $i$ with $1 \leq i \leq p$, then $L$ has no nonempty cells in $T(h(n, 2i)$ and thus no labels in $L$ are pruned when we consider $T(h(n, 2i)$. On the other hand, if $0 < h(l,2i) \leq pj$, then $T(h(l,2i)$ will have $\lceil \frac{h(l,2i)}{p} \rceil$ nonempty cells in $L$ and which is exactly the number of labels removed from $L$ when considering $T(h(n, 2i))$. Similarly, if $h(l,2i) > pj$, then all $j$ cells of $L$ are nonempty in $T(h(n,2i)$ and $j$ labels are removed from $L$ when considering this subtree. Therefore, it follows that $d(l)$ is the number of labels that are removed from $L$ during pruning. Since there are $l-pj-m$ labels in nodes of $T(n)$ after $R$, we may repeat this argument to obtain that $d(l-pj-m)$ is the number of labels removed from $R$ during pruning.

Let $l = l_1 + l_2$ where $l_1$ is the total number of labels in $L$ and $R$  before pruning, and $l_2 \leq pj - m$ is the number of labels in nodes of $T(n)$ after $R$. Then there will be $p j - m + l_1 - d(l) - d(l-pj-m)$ labels on $P$ before the end correction step. During the end correction step we remove $p j -m$ largest labels from the tree. Namely, $l_2$ labels will be removed from the nodes that follow $R$ in preorder and the remaining $l_2 - p j +m$ will be taken from $P$, leaving exactly $l - d(l) - d(l-pj-m)$ labels in it.
\end{proof}

If $P$ is not one of the two last nonempty penultimate nodes in $T(n)$, then there are at least $3pj+m$ labels in the nodes that follow it. Thus, from the lemma, after pruning $P$ will contain $pj + m$ labels. Now, if $P$ and $Q$ are the last and second last nonempty penultimate nodes, respectively, and all children of $P$ are empty, then $P$ will be empty in $T^*(n)$.  If $l_Q > 3pj + m$ then $Q$ contains $pj + m$ labels; otherwise there are only $l_Q - d(l_Q) - d(l_Q - pj-m)$ labels in $Q$. If $P$ has a nonempty child then $l_Q > 3pj+m$ and $Q$ will have $pj+m$ labels in $T^*(n)$. If $l_P > 3pj + m$ then $P$ contains $pj + m$ labels; otherwise there are only $l_P - d(l_P) - d(l_P - pj-m)$ labels in $P$. Therefore, to prove Lemma \ref{lm:prunlincomb1} it remains to verify that if $P$ is a penultimate node and $l_P \leq 3pj+m$, then $l_P - d(l_P) - d(l_P - pj - m) \leq pj + m$.

Furthermore, observe that if $l_P \leq p(j-1) + 1$ then $d(l_P) = d(l_P - pj - m) = 0$, i.e. the number of labels in $P$ after pruning is $l_P$ and in that case both Lemma (\ref{lm:prunlincomb1}) and Lemma (\ref{lm:prunlincomb2}) hold. Therefore, to complete the proof of these lemmas it suffices to check that the following result holds.
\begin{lemma}
For $p(j-1) + 1 < l \leq 3pj+m$, $p(j-1) + 1 \leq f(l) \leq pj + m$ where $f(l) = l - d(l) - d(l - pj -m)$.
\label{lm:fbounds}
\end{lemma}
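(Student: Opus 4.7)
The bounds $p(j-1)+1 \leq f(l) \leq pj+m$ are equivalent to the pair of inequalities
\[
l - pj - m \;\leq\; d(l) + d(l - pj - m) \;\leq\; l - p(j-1) - 1.
\]
My plan is to establish these by a direct case analysis on $l$, after first developing an explicit piecewise-linear description of $d$.

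\textit{Step 1: Piecewise description of $d$.} Substituting $v = k - p(j-1)$ rewrites $h_i(k) = v + 1 - 2i$. The $i$-th summand is $0$ while $v+1-2i \leq 0$, equals $\lceil (v+1-2i)/p \rceil$ as long as this quantity is less than $j$, and is capped at $j$ thereafter. From this I would show that $d(k) = 0$ for $v \leq 1$ and $d(k) = pj$ for $v \geq p(j+1)$, and in between $d$ is nondecreasing and piecewise linear, with jump-points at $k = p(j-1+a) + 2i - 1$ for $0 \leq a \leq j-1$ and $1 \leq i \leq p$. A short analysis of the residues shows that $d$ jumps by $1$ at each such $k$ when $p$ is odd, and by at most $2$ simultaneously when $p$ is even.

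\textit{Step 2: Three regimes.} I partition the range $p(j-1)+1 < l \leq 3pj + m$ according to where $l$ and $l - pj - m$ fall relative to the breakpoints of $d$:
\begin{description}
\item[(I)] $l - pj - m \leq p(j-1) + 1$, so $d(l - pj - m) = 0$ and both inequalities reduce to one-sided bounds on $d(l)$ alone.
\item[(II)] $l$ lies in the middle, with both $d$-values strictly in the growth region.
\item[(III)] $l$ is large enough that $d(l) = pj$, so both inequalities become one-sided bounds on $d(l - pj - m)$.
\end{description}
In regimes (I) and (III) the verification is immediate from the closed-form expression of Step 1. The boundary cases $l = p(j-1)+2$ and $l = 3pj + m$ give equality in the lower and upper bound respectively, providing a useful sanity check and also fixing constants in the formula.

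\textit{Main obstacle.} The technical heart of the argument is regime (II), where both $d(l)$ and $d(l - pj - m)$ sit in their transitional growth regions and a mismatch of $\pm 1$ between the ceilings of the two sums can threaten either bound. My plan is to further subdivide regime (II) by the residue of $l - p(j-1) - 1$ modulo $p$ (or modulo $2p$ when $p$ is even), at which point every ceiling resolves into an explicit linear expression in $l$ and both inequalities reduce to elementary arithmetic. A mild extra subtlety arises when $p$ is even, because $d$ can then jump by $2$ at a single step and the two $d$-values can jump at different times; however this only shifts the bounds by an additive constant that is absorbed by the slack $(p+m)$ between the upper and lower targets. The argument is essentially mechanical once the right parametrization is chosen; the difficulty is bookkeeping rather than conceptual.
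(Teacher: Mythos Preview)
Your proposal is correct and follows essentially the same route as the paper: both first establish the piecewise increment structure of $d$ (your Step~1 is exactly the paper's Lemma on $d$), and then partition the range of $l$ according to which of $d(l)$ and $d(l-pj-m)$ lie in the zero, growth, or saturated phase, with the main care needed in the overlap region. The only cosmetic difference is that the paper argues directly from the increment description of $d$ (constant or alternating $0/2$) rather than your residue-mod-$p$ parametrization, but the two executions are equivalent.
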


First we restrict our attention to $d(l)$. The following Lemma completely determines behaviour of $d(l)$ for the specified range of $l$.

\begin{lemma}
For $p(j-1)+ 2 \leq l \leq 3pj + m$, $d(l)$ is a non-decreasing function. In particular, as $l$ changes from $p(j-1)+2$ to $p(j-1) + p$ the function $d$ grows from $d(p(j-1)+2) = 1$ to $d(p(j-1)+p) = \lfloor \frac{p}{2} \rfloor$. For $2pj \geq l > p(j-1) + p$, if $p$ is odd then $d(l+1) -d(l) = 1$ and if $p$ is even then $d(l+1) - d(l)$ alternates between 0 and 2 if $p$ is even. For $l > 2pj$, $d(l) = pj$.
\label{lm:dbounds}
\end{lemma}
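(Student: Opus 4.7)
The plan is to decompose $d(l) = \sum_{i=1}^p g_i(l)$ where
$g_i(l) := \min(j,\lceil h(l,2i)/p\rceil\cdot \mathbf{1}_{[h(l,2i)>0]})$, and to analyze each $g_i$ separately. Since every $g_i$ is an integer-valued non-decreasing step function of $l$ (the ceiling of an increasing linear function, truncated at 0 below and at $j$ above), the sum $d(l)$ is non-decreasing throughout $[p(j-1)+2,\,3pj+m]$, which handles the first assertion.

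The key observation for the rest is an explicit identification of the jump points of $g_i$. Write $y_i := h(l,2i) = l-p(j-1)-2i+1$, so that $g_i(l)=\min(j,\lceil y_i/p\rceil)$ when $y_i>0$ and is $0$ otherwise. Since $\lceil y_i/p\rceil$ increases by $1$ exactly when $y_i$ passes through a value of the form $kp+1$, the function $g_i$ jumps by $1$ precisely at $l = p(j-1)+2i+kp$ for $k=0,1,\ldots,j-1$, and is constant (equal to $0$ below and to $j$ above) outside this arithmetic progression. In particular, all jump positions of $g_i$ lie in the residue class $2i\pmod p$.

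The explicit boundary values then follow by direct inspection. At $l=p(j-1)+2$ only $g_1$ has just made its first jump (the indices $i\geq 2$ still satisfy $y_i\leq 0$), so $d(p(j-1)+2)=1$. At $l=p(j-1)+p$ exactly those indices with $2i\leq p$, namely $i=1,\ldots,\lfloor p/2\rfloor$, have contributed their first (and so far only) jump, giving $d(p(j-1)+p)=\lfloor p/2\rfloor$. At the upper end, for $l>2pj$ we have $y_i>(j-1)p$ for every $i$, so $g_i(l)=j$ and $d(l)=pj$.

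It remains to read off the stepwise differences in the middle range. Because the jump positions of $g_i$ lie in the residue class $2i\bmod p$, when $p$ is odd the residues $\{2i\bmod p : i=1,\ldots,p\}$ form a complete residue system mod $p$, so for each $l$ in the active range there is a unique $i$ with $l+1\equiv 2i\pmod p$ that is within the jump window, yielding $d(l+1)-d(l)=1$. When $p$ is even, $2i\bmod p$ runs only through the even residues, each hit by exactly two indices (namely $i$ and $i+p/2$); odd residues therefore produce no jumps while each even residue in the interior produces two simultaneous jumps, giving the alternating $0,2,0,2,\ldots$ pattern. The main obstacle I expect is the bookkeeping at the edges of the middle range (near $l=p(j-1)+p$ and $l=2pj$), where some $g_i$'s have not yet begun jumping or have already saturated at $j$; this requires tracking the endpoints of the explicit arithmetic progressions of jumps, but is a finite case analysis rather than a structural difficulty.
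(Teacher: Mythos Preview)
Your approach is essentially the same as the paper's: both reduce the difference $d(l+1)-d(l)$ to counting which summands jump, via a residue analysis modulo $p$. The paper packages this as an interval decomposition $S_1=\{h(l,1),\ldots,h(l,p)\}$, $S_2=\{h(l,p+1),\ldots,h(l,2p)\}$---each block of $p$ consecutive integers contains exactly one multiple of $p$, and a parity argument on the index of that multiple gives exactly one even index when $p$ is odd and zero or two (alternating) when $p$ is even---which is equivalent to your observation that the jump positions of $g_i$ lie in the class $2i\bmod p$; your caveat about edge bookkeeping near saturation is apt and the paper is equally informal there.
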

\begin{proof}
 Note that when $l$ increases by 1, each summand in $d(l)$ either increases by 1 or stays the same. It follows, that $d(l)$ is a non-decreasing function. To prove the rest of the Lemma we need to understand how many summands in $d(l)$ can increase at the same time. That is, we need find how many of the $h(l,2), h(l,4), \ldots h(l,2p)$ can be multiples of $p$ at the same time.

Consider two intervals $S_1 = [h(l,1), h(l,p)]$ and $S_2 = [h(l,p+1), h(l,2p)]$. The integers in $S_1 \cup S_2$ of the form $h(l,2i)$ correspond to the summands in $d(l)$. Also, note that each of these intervals contain exactly one multiple of $p$.

If $p$ is odd and $h(l,x)$ and $h(l,y)$ are multiples of $p$ from the first and second list respectively then it follows that $y = x + p$ and hence one of $x,y$ is even and the other one is odd. We also note that if $l$ is increased by 1 then the roles of $x$ and $y$ are interchanged, i.e. if $x$ was even and $y$ was odd, then after $l$ is increased $x$ is odd and $y$ is even. Therefore, if $p$ is odd there is always exactly one multiple of $p$ among $h(l,2), h(l,4), \ldots h(l,2p)$, i.e. each time $l$ is increased by 1 exactly one of $\lceil \frac{h(l,2)}{p} \rceil, \ldots, \lceil \frac{h(l,2p)}{p} \rceil$ increases by 1 as well. We also note that the increasing terms alternate between the $S_i$: if an increase in $l$ by 1 leads to an increase in $\lceil \frac{h(l,2u_1)}{p} \rceil$ and $h(l,2u_1)$ belongs to $S_1$ then increasing $l$ again leads to an increase in $\lceil \frac{h(l,2u_2)}{p} \rceil$ with some $h(l,2u_2)$ in $S_2$. The analysis in this paragraph is also valid for $p=1$.

Similarly, if $p$ is even then $x$ and $y$ have the same parity. Moreover, if $x,y$ are even then once $l$ is increased by 1, they both become odd and vice versa. Therefore, if $p$ is even either none or exactly two of $\lceil \frac{h(l,2)}{p} \rceil, \ldots, \lceil \frac{h(l,2p)}{p} \rceil$ grow by 1 when $l$ increases by 1. Thus, the difference sequence $d(l+1)-d(l)$ alternates between 0 and 2.

Finally, we are ready to fully describe the behaviour of $d(l)$. We observe that when $p(j-1)+1 < l \leq p(j-1) + p$ the function $d$ grows from $d(p(j-1)+2) = 1$ to $d(p(j-1)+p) = \lfloor \frac{p}{2} \rfloor$. This is because each summand of $d(l)$ corresponding to indices in $S_1$ increases by 1 and each summand corresponding to indices in $S_2$ remains zero because the indicator function $\mathbf{1}_{[h(l, 2i) > 0]}$ will be zero for those summands. For $2pj \geq l > p(j-1) + p$, $d(l)$ either satisfies $d(l+1) -d(l) = 1$ if $p$ is odd, or $d(l+1) - d(l)$ alternates between 0 and 2 if $p$ is even. As we have noted earlier, $d(l) = pj$ for $l > 2pj$.
\end{proof}

It follows from the Lemma \ref{lm:dbounds} that a similar result holds for $d(l-pj - m)$. As $l$ increases from $pj+m + p(j-1)+2$ to $pj + m +  p(j-1) + p$, the function $d$ grows from $1$ to $\lfloor \frac{p}{2} \rfloor$. After that $d(l -pj -m)$ is either a slowly growing sequence or has successive differences that alternate between 0 and 2. Once $d$ reaches $pj$, it remains constant.

Now we are ready to prove Lemma \ref{lm:fbounds}.

\begin{proof}
Recall that we would like to establish that for $p(j-1) + 1 < l \leq 3pj+m$, $p(j-1) + 1 \leq f(l) \leq pj + m$. It follows from Lemma \ref{lm:dbounds} that $f(l)$ grows from $p(j-1)+1$ at $l = p(j-1)+2$ to $p(j-1)+ \lceil \frac{p}{2} \rceil$ at $l = p(j-1) + p$. For $p(j-1) + p \leq l \min(pj + m + p(j-1) + 1, 2pj)$, $f(l)$ remains constant if $p$ is odd or alternates between $p(j-1) + \lceil \frac{p}{2} \rceil + 1$ and $p(j-1) + \lceil \frac{p}{2} \rceil$ if $p$ is even.  Thus, $f(l)$ lies within the required bounds for $p(j-1) + 1 < l \leq \min(pj + m + p(j-1) + 1, 2pj)$.

Now we consider two cases: $2pj \leq pj + m + p(j-1)+1$ and $pj + m + p(j-1)+1 < 2pj$. If $2pj \leq pj + m + p(j-1)+1$ then $f(l)$ is a increasing for $l$ in $[2pj, pj + m + p(j-1) + 1]$ with $f(pj + m + p(j-1) + 1) = p(j-1) + 1 +m < pj + m$ since $d(l) = pj$ for $l \geq 2pj$. Also, since $f(l)$ is increasing in this case, we still have $f(l) \geq p(j-1) + 1$ for $l$ in the given range.

If $pj + m + p(j-1)+1 < 2pj$ then both $d(l)$ and $d(l-pj-m)$ grow at the same time and $f(l)$ can potentially fall below $p(j-1) + 1$. However, note that $d(l-pj-m)$ grows only up to $\lfloor \frac{p}{2} \rfloor$ between $pj +m+ p(j-1)+1$ and $pj + m + p(j-1)+p = 2pj + m$ by Lemma \ref{lm:dbounds}. Therefore,
$f(l)$ can only decrease to $p(j-1) + 1$ on this interval. From the previous case it follows that the upper bound $f(l) \leq pj + m$ still holds in this case as well.

Therefore, $f(l)$ lies within the required bounds for $\min(pj + m + p(j-1) + 1, 2pj) < l \leq \max(pj + m + p(j-1) + 1, 2pj)$.

For $\max(pj + m + p(j-1) + 1, 2pj) \leq l  \leq p(j-1) + p + pj + m$, $f(l)$ is a non-decreasing function with $f(p(j-1) + p + pj + m) = p(j-1) + m + \lceil \frac{p}{2} \rceil \leq pj + m$. Thus, for $l$ in this range $f(l)$ also lies within the required bounds.

Finally, for $l$ in $[p(j-1) + p + pj + m, 3pj + m]$, $f(l)$ is either $p(j-1) + m + \lceil \frac{p}{2} \rceil$  ($p$ is odd) or  alternates between $p(j-1) + m + \lceil \frac{p}{2} \rceil$ and $p(j-1) + m + \lceil \frac{p}{2} \rceil + 1$ ($p$ is even). Therefore, $f(l)$ is within the required range for these values of $l$ as well and the proof is complete.
\end{proof}

\end{section}


\begin{section}{Nested $k$-ary order $p$ recursions} \label{sec:karyorderp}

In this section we continue our study of nested recursions via the lens of simultaneous parameters by extending our earlier approach to solve certain $k$-ary, order $p$ recursion families. We begin our discussion by reviewing previous work on $k$-ary nested recursions of type (\ref{eq:Conolly}).

The $k$-ary Conolly recursion
\begin{equation} \label{ktermConolly}
C_k(n) = \sum_{i=1}^k C_k(n - i + 1 - C_k(n-i)) \,. \end{equation}
is studied in \cite{DR}. There it is shown that the solution to (\ref{ktermConolly}), with appropriate initial conditions, counts leaves on the infinite, labelled, $k$-ary tree which is the natural extension of the infinite binary tree associated with the solution to the usual Conolly recursion (\ref{eq:0112}). Further, it is shown that the frequency sequence of this solution is $\phi_{C_k}(v) = \nu_k(v) + 1$, where $\nu_k(v)$ is the $k$-adic valuation of $v$.

As it will be required in what follows, we describe the infinite, labelled $k$-ary tree used above, which reduces to the binary tree we described earlier when $k=2$. There are supernodes along the leftmost spine and regular nodes. The first supernode has $k$ leaf children; every other supernode has $k-1$ regular nodes plus a supernode for their $k$ children.  Apart from the leaves, all regular nodes also have $k$ children. See Figure \ref{fig:3aryskeleton} for the skeleton of the tree for $k=3$. The nodes are labelled in preorder, with $s$ labels in each supernode and one label in each regular node. For (\ref{ktermConolly}), $s=0$ and $C_k(n)$ counts the number of leaves up to the $n^{th}$ label.

\begin{figure}
\includegraphics[scale=.14]{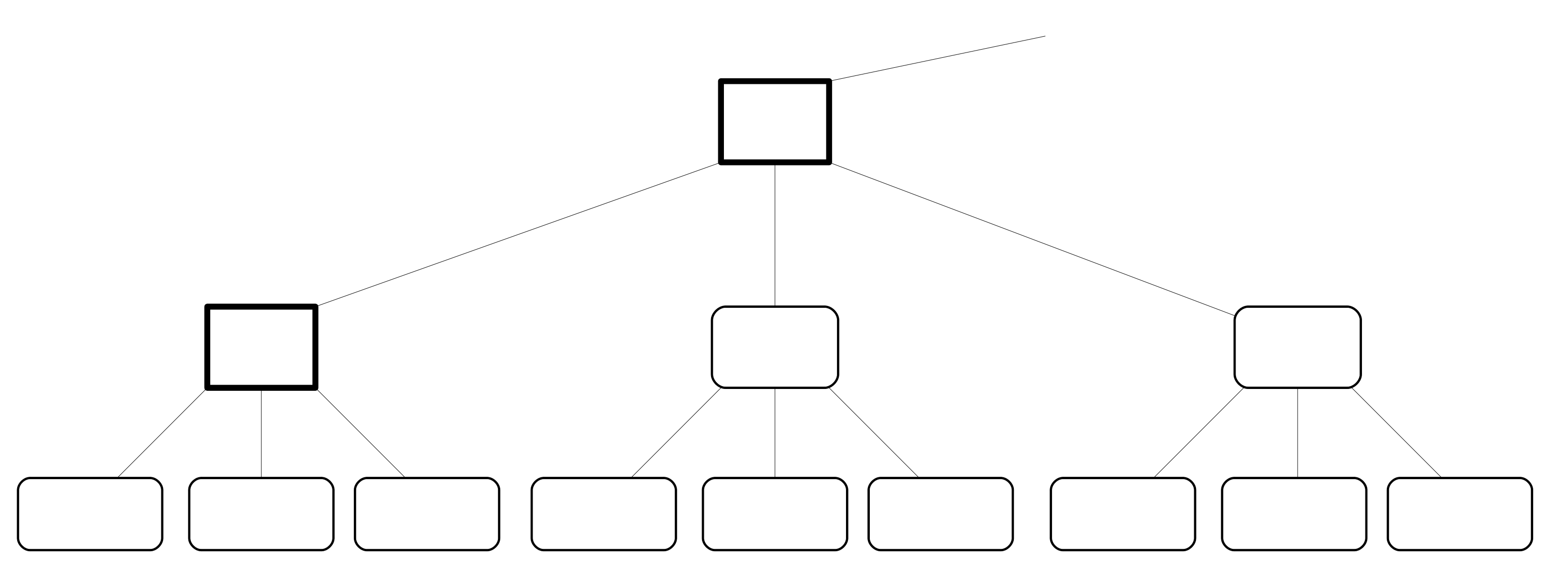}
 \caption{The skeleton of the infinite $k$-ary tree for $k=3$.}\label{fig:3aryskeleton}
\end{figure}

Recall that the ceiling function $\lceil \frac{n}{2} \rceil$ is the solution to the $H$ recursion (\ref{eq:0123}) (see, for example, \cite{BLT, ConollyLike}. This result is generalized in \cite{Isgur}, where it is shown that $\lceil \frac{n}{k} \rceil$ is the solution to the following $k$-ary, order $k-1$ recursion with appropriate initial conditions:
\begin{equation} \label{kceiling}
H_k(n) = \sum_{i=1}^k H_k(n - (i-1)k - \sum_{t=1}^{k-1} H_k(n-(i-1)k-t))\,.\end{equation}
Once again a tree-based methodology is used to prove this result.
The infinite $k$-ary tree associated with (\ref{kceiling}) has the same skeleton as the $k$-ary tree described in the previous paragraph, but with a different labelling. This tree contains $k$ labels in each leaf and no labels in any other node. Because the labels are enumerated in preorder, it follows that this is just a sequential labelling as one traverses the leaves from left to right. Note that the resulting solution sequence $\lceil \frac{n}{k} \rfloor$ has the frequency sequence $\phi_{H_k} (v) = k$.

In what follows we use our tree-based methodology to derive and solve  a new family of $k$-ary, order $p$ recursions that includes the above two families. This family of recursions extends the arity two, $(\alpha,\beta)$-Conolly recursion (\ref{rec1}) to arity $k$. Further, we show that this family includes certain recursions whose solution sequence has frequency sequence $\gamma \phi_{H_k} + \delta \phi_{C_k}= \gamma k +\delta \phi_{C_k}$. The trees associated with these latter recursions result from the superposition of the appropriate number of copies of the trees associated with $C_k$ and $H_k$.

\subsection{The main theorem} \label{sec5:main result}

We extend the arity 2, $(\alpha,\beta)$-Conolly recursion (\ref{rec1}) to arity $k$ by extending the construction of the arity two tree from Section \ref{sec:orderp}. Recall from Section \ref{sec:orderp} that the tree constructed there relied on four parameters $s,j,m$ and $p$. For ease of exposition we limit our discussion here to the case $s=0$ and $j=1$.

Fix parameters $k \geq 3, p \geq 1$ and $m$ satisfying $p-1 \leq m \leq \frac{kp}{k-1} -1$; the infinite tree we now construct will be denoted $T = T_{m,p,k}$. The tree $T$ has the same skeleton as the infinite $k$-ary tree described above. The labelling of $T$ is as follows: the leaves of $T$ each contain $1+m$ labels. All other regular nodes each contain $x := pk - (k-1)(1+m)$ labels. The range of $m$ ensures that $x \geq 0$. The labels of the resulting tree are then enumerated in preorder. Note that for $k=2$ we get the construction of Section \ref{sec:orderp}, except that the range of $m$ is more constrained. Refer to Section 5.2, where we talk further about what goes wrong with our proof for $m<p-1$.

As before, let $T(n)$ be the subtree of $T$ with $n$ labels in preorder. If a leaf of $T(n)$ is the $i^{th}$ child of its parent at the penultimate level, then we will abbreviate it as a $i^{th}$ leaf. Define $C_T(n)$ to be the function that counts the number of nonempty leaves of $T(n)$. Then our main result is that $C_T(n)$ satisfies a $k$-ary, order $p$ nested recursion.

\begin{theorem} \label{thm:ktermorderp}
With $T$ as defined above let $C_T(n)$ be the number of nonempty leaves of $T(n)$. For $n > 2k(p+m) + p-(k-1)m$
(all labels up to the last child of the second penultimate level node must be filled), $C_T(n)$ satisfies the recursion

\begin{equation} \label{eq:ktermorderp}
R(n) = \sum_{i=1}^{k} R\left(n - (i-1)(1+m) - \sum_{t=1}^{p} R\left(n - (i-1)(1+m) - t\right)\right).
\end{equation}
\end{theorem}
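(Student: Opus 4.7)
The plan is to extend the tree-based arguments from Sections 2, 3, and 4 to the $k$-ary setting. The first step is a reduction: decompose $C_T(n) = \sum_{i=1}^{k} C_{T,i}(n)$, where $C_{T,i}(n)$ counts the nonempty $i$-th children of penultimate parents in $T(n)$. Because the $k$ leaf siblings of any penultimate parent occupy consecutive positions in preorder and each carries $1+m$ labels, a label-shift bijection yields $C_{T,i}(n) = C_{T,1}(n - (i-1)(1+m))$ provided $n$ is beyond the initial penultimate parent, which is guaranteed by the hypothesized lower bound on $n$. It therefore suffices to establish
\begin{equation*}
C_{T,1}(n) \;=\; C_T\Bigl( n - \sum_{t=1}^{p} C_T(n-t) \Bigr),
\end{equation*}
since the theorem then follows by applying this identity with $n$ replaced by $n-(i-1)(1+m)$ and summing over $i$.

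To prove this identity I would define a pruning operation on $T(n)$ directly modeled on those in Sections 2, 3, and 4. The steps are: (i) an initial correction that inserts $x = pk - (k-1)(1+m)$ placeholder labels into the first supernode; (ii) a deletion step that, for each $t \in \{1, \ldots, p\}$ and each nonempty leaf $L$ of the subtree $T(n-t)$, removes one label from $L$ in $T(n)$ (or from its penultimate parent if $L$ is already empty), where the constraint $m \geq p-1$ ensures each full leaf contains $1+m \geq p$ labels so these deletions are always available; (iii) a lifting step that moves the surviving leaf labels up to the penultimate parent; (iv) an end correction removing the $x$ largest labels in preorder; and (v) relabelling in preorder. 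Write $T^*(n)$ for the pruned tree. Steps (i) and (iv) cancel in the overall label count, so $T^*(n)$ contains $n - \sum_{t=1}^p C_T(n-t)$ labels and has the same skeleton as $T$.

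The core of the proof is then the analog of Lemmas 2.2, 3.3, and 3.4: show that $T^*(n)$ is identical to $T(n - \sum_{t=1}^p C_T(n-t))$ and that, for every penultimate parent $P$ of $T(n)$, $P$ is nonempty as a leaf of $T^*(n)$ if and only if its first child is nonempty in $T(n)$. For a penultimate parent $P$ whose $k$ leaf children are all full in $T(n)$ and which is followed in $T(n)$ by a full non-leaf regular node, each child loses exactly $p$ labels in the deletion step and then contributes $1+m-p$ labels in the lifting step, so the final label count of $P$ is
\begin{equation*}
x + k(1+m-p) \;=\; pk - (k-1)(1+m) + k(1+m) - kp \;=\; 1+m,
\end{equation*}
exactly the prescribed leaf count in $T$. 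The same calculation applied to the first supernode uses the $x$ placeholders added in step (i). Non-penultimate regular nodes, and supernodes above the first, retain their original labels, so the labelling rules of $T$ are reproduced in $T^*(n)$.

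The hard part will be the boundary analysis for the last nonempty penultimate parent of $T(n)$, where the children of $P$ may be only partially filled and where step (iv) can strip labels off $P$ itself. Following Subcases 1a through 3b of Section 3, I would split into cases by the position of the label $n$: whether $n$ lies in the $r$-th child of $P$ for some $r \in \{1, \ldots, k\}$ (with subcases according to how many labels that child contains) or in a non-leaf node that follows $P$ in preorder. In each case, a counting lemma in the spirit of Lemma 3.5 tracks the net number of labels deleted from and lifted into $P$, and the desired bounds, namely $0 \leq (\text{labels of } P \text{ in } T^*(n)) \leq 1+m$ with positivity exactly when the first child of $P$ is nonempty in $T(n)$, reduce to elementary inequalities. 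The range $p-1 \leq m \leq \frac{kp}{k-1}-1$ turns out to be precisely what is needed for these inequalities to balance: the lower bound guarantees enough labels per leaf to sustain $p$ deletions, and the upper bound ensures $x \geq 0$ so that steps (i) and (iv) are well defined.
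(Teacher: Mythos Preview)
Your proposal is correct and follows essentially the same approach as the paper: decompose $C_T(n)$ into the $k$ per-child counts, reduce to the first-child count via the $(1+m)$-shift bijection, and establish $C_{T,1}(n) = C_T(n - \sum_{t=1}^p C_T(n-t))$ through the five-step pruning operation you describe, with Lemma~\ref{ktermorderpkeylemma} playing the role you assign to it. One minor point: the boundary analysis here is considerably simpler than in Section~\ref{sec:orderp} because $j=1$ eliminates the cell structure and $m \geq p-1$ guarantees each full leaf can absorb all $p$ deletions without borrowing from its parent; the paper handles it in just two cases (label $n$ in some child of $P$, or label $n$ strictly after $P$'s last child) using the substitution $\mu = m-p+1$, rather than the six subcases you anticipate from Section~\ref{sec:orderp}.
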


\begin{remark}For $k=2$ the recursion in Theorem \ref{thm:ktermorderp} does not reduce to (\ref{rec1}). Nonetheless we will show in Section \ref{sec5:consequences} that for any $\alpha \geq 0$ there exist values of $m$ and $p$ such that the tree-based solution of (\ref{eq:ktermorderp}) is the same as that for (\ref{rec1}) .  For $\alpha < 0$ we cannot derive the corresponding solution sequence obtained through (\ref{rec1}) in Section 3 because we require $m \geq p-1$.  See Section \ref{sec5:consequences} for additional details.
\end{remark}

The proof of Theorem \ref{thm:ktermorderp} follows from the definition of $C_T(n)$ once we establish the following lemma.

\begin{lemma} \label{termcounts}
For each $1 \leq i \leq k$ the term $$C_T\left(n - (i-1)(p+m) - \sum_{t=1}^{p} C_T\left(n - (i-1)(p+m) - t\right)\right)$$
counts the number of nonempty $i^{th}$ leaves of $T(n)$.
\end{lemma}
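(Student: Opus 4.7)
The plan is to reduce the lemma to the $i=1$ case by a preorder shift, and then handle $i=1$ via a pruning operation analogous to those of Sections \ref{sec:order2} and \ref{sec:orderp}. For the reduction, observe that within any penultimate node of $T$ the $i$-th child leaf sits exactly $(i-1)(1+m)$ positions in preorder past the 1st child leaf, since each leaf of $T$ carries $1+m$ labels. Consequently, once $n$ is large enough that the leading portion of $T$ is fully labelled---which is what the hypothesis $n > 2k(p+m) + p - (k-1)m$ ensures---there is a natural bijection between the nonempty $i$-th leaves of $T(n)$ and the nonempty 1st leaves of $T(n - (i-1)(1+m))$. Granting the $i=1$ case, the general formula follows by applying it to $n - (i-1)(1+m)$.

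It remains to show that $C_T(n - \sum_{t=1}^{p} C_T(n-t))$ counts the nonempty 1st leaves of $T(n)$. To this end I would define the following pruning of $T(n)$: for each $t = 1,\ldots,p$ and each nonempty leaf of the subtree $T(n-t) \subset T(n)$, remove the largest surviving label from the corresponding leaf of $T(n)$; then lift all remaining leaf labels into their penultimate parent, discard the now-empty bottom-level nodes, and relabel in preorder to obtain $T^*(n)$. By construction $T^*(n)$ has the same skeleton as $T$ and contains $n - \sum_{t=1}^{p} C_T(n-t)$ labels, and the standing hypothesis $m \geq p-1$ guarantees $1+m \geq p$, so a full leaf always has enough labels to absorb the $p$ deletions.

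Next I would check that $T^*(n)$ obeys the labelling rule of $T$, whence $T^*(n) = T(n - \sum_{t=1}^{p} C_T(n-t))$. For a penultimate node $P$ of $T(n)$ that is followed in preorder by a completely filled regular node, each of the $k$ children of $P$ loses exactly $p$ labels during the deletion step, so $k(1+m-p)$ labels are lifted into $P$; together with the $x = pk - (k-1)(1+m)$ labels $P$ already carried, this leaves $P$ with $x + k(1+m-p) = 1+m$ labels, exactly the number required for a leaf of $T$. In the same spirit I would prove the bijection underpinning Lemma \ref{termcounts}: a penultimate node $P$ of $T(n)$ appears as a nonempty leaf of $T^*(n)$ if and only if the 1st child leaf of $P$ is nonempty in $T(n)$.

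The main obstacle, exactly as in the proofs of Lemmas \ref{claim3} and \ref{claim4}, is the boundary analysis at the last penultimate node of $T(n)$ that still has a nonempty child. Adapting the Case 1--3 subdivision of Section \ref{sec:orderp} to $k$ children, one would split according to which child leaf of $P$ contains the label $n$, or whether $n$ lies in the next regular node after $P$, and in each subcase compute how many labels each of $P$'s $k$ children contributes to $\sum_{t=1}^{p} C_T(n-t)$ and how many survive to be lifted, verifying that the net number of labels at $P$ is positive exactly when the 1st child of $P$ is nonempty and never exceeds $1+m$. Because the leaves in this section carry no internal cell structure, the per-subcase arithmetic is strictly simpler than in Section \ref{sec:orderp}, but the need to track all $k$ children rather than two is the principal technical burden.
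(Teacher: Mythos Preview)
Your reduction to $i=1$ via the preorder shift by $(i-1)(1+m)$ is exactly what the paper does. The gap is in your pruning operation: you omit the \emph{initial correction} and \emph{end correction} steps that the paper needs here just as it did in Section~\ref{sec:orderp}. The first supernode of $T(n)$ is a penultimate node but carries $s=0$ labels, not $x$; under your deletion--lifting--relabel pruning its $k$ full children each contribute $1+m-p$ labels, leaving it with $k(1+m-p)$ labels as a leaf of $T^*(n)$, not the required $1+m$. These differ by exactly $x=pk-(k-1)(1+m)$, so unless $x=0$ the first leaf of $T^*(n)$ violates the labelling rule and $T^*(n)\neq T(n-\sum_t C_T(n-t))$. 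The paper repairs this by inserting $x$ placeholder labels into the first supernode at the start and then stripping the $x$ largest labels at the end; the latter step is also what keeps the last nonempty penultimate node from overflowing (for instance, when $m=p-1$ and label $n$ is the first label in the first child of $P$, your pruning leaves $P$ with $x+1=p+1>1+m$ labels).

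Once the correction steps are in place, the boundary analysis is lighter than you anticipate: because there is no internal cell structure and only one parameter $l$ (the position of label $n$ within or just past $P$'s children), the paper needs only two cases rather than the three-case subdivision of Section~\ref{sec:orderp}. With the substitution $\mu=m-p+1\in[0,p/(k-1)]$, the label count in $P$ after pruning is simply $(i-1)\mu+1$ or $(i-1)\mu+(l-p)$ in Case~1 and $k\mu+\min\{l,\,p-(k-1)\mu\}$ in Case~2, and the required bounds $1\le\cdot\le p+\mu$ are immediate.
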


To prove Lemma \ref{termcounts} we introduce a pruning operation on $T(n)$ for $n > k(p+m) + p-(k-1)m$ (all labels up to the first child
of the second penultimate node level must be filled). The pruned tree will be denoted $T^*(n)$.
\begin{description}
\item[initial correction step] Insert $x = pk - (k-1)(1+m)$ labels in the first supernode of $T(n)$.
\item[deletion step] Consider the subtrees $T(n-t)$ for $1 \leq t \leq p$. For each leaf of $T(n)$,
delete a label from it for every subtree $T(n-t)$ in which it is nonempty.
\item[lifting step] Lift any remaining labels from the leaves of $T(n)$ to their corresponding parent
at the penultimate level. After this step all leaves of $T(n)$ become empty.
\item[end correction step] Delete the largest $x = pk- (k-1)(1+m)$ labels that are present in $T(n)$ after the last step.
This deletion of labels offsets the insertion of $x$ labels into the first supernode during the initial
correction step.
\item[relabelling step] Delete all the (empty) leaves of $T(n)$ and relabel all remaining labels in preorder.
The previous penultimate level nodes of $T(n)$ become the new leaves.
\end{description}

The total number of labels in the pruned tree $T^*(n)$ is $n - \sum_{t=1}^p C_T(n-t)$. The key lemma follows.
\begin{lemma} \label{ktermorderpkeylemma}
The pruned tree $T^*(n)$ is identical to the tree $T(n - \sum_{t=1}^p C_T(n-t))$. Furthermore,
if $P$ is a nonempty leaf of $T^*(n)$ then the first child of $P$ in $T$ is a nonempty
leaf of $T(n)$.
\end{lemma}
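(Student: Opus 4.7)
The plan is to verify both assertions via careful bookkeeping of labels across the pruning operation, in the same spirit as the analysis of Section~\ref{sec:orderp}. To set the stage, note that $T^*(n)$ and $T(n-\sum_{t=1}^p C_T(n-t))$ share the canonical infinite $k$-ary skeleton, and the total number of labels in $T^*(n)$ is exactly $n - \sum_{t=1}^p C_T(n-t)$: the initial and end correction steps each involve $x$ labels and so cancel, while in the deletion step each nonempty leaf of $T(n-t)$ contributes exactly one deleted label to the corresponding leaf of $T(n)$. Because $T(N)$ is uniquely determined by its skeleton and $N$---labels being inserted in preorder with the prescribed counts per node type---it suffices to check that $T^*(n)$ obeys the labelling rule of $T$: zero labels per supernode (since $s=0$), $x$ labels per non-leaf regular node, and $1+m$ labels per leaf.

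Regular nodes of $T(n)$ strictly above the penultimate level are untouched by the deletion and lifting steps and retain their original label counts, which automatically match the rule. The critical computation concerns the former penultimate nodes, which become the new leaves of $T^*(n)$. For a typical such $P$ far from the end of $T(n)$, all $k$ children of $P$ are full in $T(n)$ and remain nonempty in each of the subtrees $T(n-1), \ldots, T(n-p)$; since $m \geq p-1$, each child can afford to shed exactly $p$ labels during deletion. The surviving $k(1+m-p)$ labels lift into $P$ and combine with $P$'s original $x$ labels to yield
\[
x + k(1+m-p) = [pk-(k-1)(1+m)] + k(1+m) - kp = 1+m,
\]
the prescribed leaf count. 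The first supernode is handled identically, since initial correction augments it by exactly $x$ labels at the outset.

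For the second assertion, I would argue by contrapositive: if the first child of a former penultimate node $P$ is empty in $T(n)$, then so are all children of $P$ (preorder places the first child earliest), and so $P$ carries at most its original $x$ labels and is untouched by deletion and lifting. Since everything in preorder after $P$'s own labels lies within an empty portion of $T(n)$, those (at most $x$) labels are precisely among the $x$ largest in $T(n)$ and are stripped by the end correction, leaving $P$ empty in $T^*(n)$. The main obstacle is the complementary edge-case analysis for penultimate nodes near the last nonempty node of $T(n)$, where some children may be only partially filled or $P$ itself may lose labels to the end correction; following the template of Lemmas~\ref{claim3} and~\ref{claim4}, one splits into cases based on the position of label $n$---within the first child of $P$, within a later child, or strictly after $P$'s subtree---and in each case verifies directly, using the defining identity $x = pk-(k-1)(1+m)$ as the arithmetic balance between a regular node and its $k$ full leaf children, that $P$ ends with between $0$ and $1+m$ labels and that the number of its nonempty children is preserved in the pruned tree. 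The $k$-ary setting forces additional sub-cases according to which child contains the label $n$, a mild combinatorial complication over the arity-$2$ case, but the bookkeeping is otherwise routine.
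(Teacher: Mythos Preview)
Your outline is correct and matches the paper's approach: reduce to the last penultimate node $P$ of $T(n)$ with a nonempty child, and case on the position of label $n$ relative to the children of $P$. However, you have not actually carried out that case analysis, and that is the entire content of the proof. The paper introduces $\mu = m-p+1 \in [0,\,p/(k-1)]$, and in Case~1 (label $n$ is the $l^{th}$ label in the $i^{th}$ child of $P$) computes that $P$ ends with $(i-1)\mu+1$ labels if $l\le p$ and $(i-1)\mu+(l-p)$ labels if $l>p$; in Case~2 (label $n$ is $l$ past the last child) it ends with $k\mu+\min\{l,\,p-(k-1)\mu\}$. In both cases the count is shown to lie in $[1,\,p+\mu]=[1,\,1+m]$, using precisely the bounds on $\mu$ that come from the hypothesis $p-1\le m\le \frac{kp}{k-1}-1$. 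Calling this ``routine'' obscures that the parameter constraints are exactly what make the inequalities go through.

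Two further points. First, your target range ``between $0$ and $1+m$'' is too weak: the lower bound of $1$ (not $0$) is what yields the forward implication needed to conclude equality of the leaf counts in Lemma~\ref{termcounts}, complementing the contrapositive you proved. Second, your phrase ``the number of its nonempty children is preserved in the pruned tree'' is garbled---the children of $P$ are deleted in $T^*(n)$. What you need here (with $j=1$) is simply that $P$ is nonempty in $T^*(n)$ if and only if its first child is nonempty in $T(n)$.
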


It follows from Lemma \ref{ktermorderpkeylemma} that the number of nonempty first
leaves of $T(n)$ is equal to the number of nonempty leaves of $T^*(n)$. However,
the latter number is $C_T(n - \sum_{t=1}^p C_T(n-t))$ by the first assertion of Lemma \ref{ktermorderpkeylemma}.
So the assertion in Lemma \ref{termcounts} follows for $i=1$. The assertion for general $i$ follows due to the
usual bijection between $i$-th leaves of $T(n)$ and the first leaves of $T(n-(i-1)(1+m))$.
With Lemma \ref{termcounts} established it follows trivially that
$$C_T(n) = \sum_{i=1}^k C_T\left(n - (i-1)(1+m) - \sum_{t=1}^{p} C_T\left(n - (i-1)(1+m) - t\right)\right)\,.$$

We now prove Lemma \ref{ktermorderpkeylemma}. As before, we need only consider the penultimate level node
$P$ that is the last penultimate level node of $T(n)$ with a nonempty child in $T(n)$. We need to show that $P$ has
between 1 to $1+m$ labels as a leaf of the pruned tree $T^*(n)$. All other penultimate level nodes
of $T(n)$ either contain a full set of $1+m$ labels or no labels in $T^*(n)$ according to whether they have a nonempty
child in $T(n)$ or not. So suppose that $P$ is such a node and condition on the location of label $n$ in $T(n)$.

For ease of notation in the proof we let $\mu = m - p + 1$. Since $p-1 \leq m \leq \frac{k}{k-1}p -1$, we have that
$0 \leq \mu \leq p/(k-1)$. With this notation we observe that a leaf in the infinite tree $T$ contains $p+\mu$ labels
and all other regular nodes contain $x = p - (k-1)\mu$ labels. We need to show that after pruning $T(n)$ the node
$P$ contains between 1 to $p + \mu$ labels in $T^*(n)$.

\paragraph{\textbf{Case 1:}} Label $n$ is located in the $i^{th}$ child of $P$ with $1 \leq i \leq k$. Suppose that $n$ is the $l^{th}$
label in the $i^{th}$ child. If $l \leq p$ then the $i^{th}$ child loses $l-1$ labels during the deletion step and the first $i-1$
children lose $p$ labels each. If $p < l \leq p+\mu$ (assuming $\mu \geq 1$), then all $i$ children of $P$ lose $p$ labels
in the deletion step. Therefore, the number of labels in $P$ after the end correction step is $(i-1)\mu + 1$ if $l \leq p$
and $(i-1)\mu + l-p$ otherwise. Notice that
$$\min_{i,l} \left\{[(i-1)\mu + 1] \cdot \mathbf{1}_{[l \leq p]} + [(i-1)\mu + l-p]\cdot \mathbf{1}_{[l > p]}\right\} \geq 1\,.$$
This implies the second assertion of Lemma \ref{ktermorderpkeylemma} for this case. Also, using $p \geq 1$ and $0 \leq \mu \leq p/(k-1)$
it follows that
$$\max_{i,l} \left\{[(i-1)\mu + 1] \cdot \mathbf{1}_{[l \leq p]} + [(i-1)\mu + l-p] \cdot \mathbf{1}_{[l > p]}\right\} \leq p+\mu\,.$$
Thus, the first assertion of Lemma \ref{ktermorderpkeylemma} follows as well.

\paragraph{\textbf{Case 2:}} Label $n$ is the $l^{th}$ label following the final label in the last child of $P$ and $l \geq 1$.
In this case, each child of $P$ loses $p$ labels during the deletion step, and during the end correction step the number
of labels removed from $P$ is $p-(k-1)\mu - \min \{\,l, p-(k-1)\mu \,\}$. The total number of labels in $P$ after the relabelling step
is $k\mu + \min \{\,l, p-(k-1)\mu \,\}$. However, $1 \leq k\mu + \min \{\,l, p-(k-1)\mu \,\} \leq p+\mu$ due to $l, p \geq 1$ and $\mu \geq 0$.
This establishes both assertions of Lemma \ref{ktermorderpkeylemma} for this case.

The two cases above are exhaustive and Lemma \ref{ktermorderpkeylemma} is thus proved.

\subsection{Consequences of Theorem \ref{thm:ktermorderp}} \label{sec5:consequences}

We now relate our $k$-ary $(\alpha,\beta)$-Conolly generalization to our tree superposition methodology. Fix $k \geq 3$. If $(m,p) = (0,1)$ then the
solution sequence $C_T(n)$ is the $k$-ary Conolly sequence (\ref{ktermConolly}). On the other hand,
with $(m,p) = (k-1,k-1)$ the solution sequence $C_T(n) = H_k(n)$ because the resulting tree $T$
contains $k$ labels per leaf and 0 labels everywhere else. We can take the trees resulting
from these two choices of $(m,p)$ and superpose them as discussed in Section \ref{sec:linearcomb}.
To do so we fix coefficients $\gamma , \delta \geq 0$ such that at least one of them is positive, and then
set $m= k\gamma + \delta -1$ and $p = (k-1)\gamma + \delta$. With these choices it is easily verified that
$p \geq 1$ (since at least one of $\gamma$ and $\delta$ is positive), and that $p-1 \leq m \leq \frac{kp}{k-1} -1$.
The resulting tree $T_{m,p,k}$ contains $\gamma k + \delta$ labels in each leaf and $\delta$ labels in each
of the remaining regular nodes. It is the superposition of $\gamma$ copies of $T_{k-1,k-1,k}$ and $\delta$
copies of $T_{0,1,k}$. Therefore, the frequency sequence $\phi_{C_T}$ of the solution sequence $C_T(n)$ is
$\gamma \phi_{H_k(n)}  + \delta \phi_{C_k} = \gamma k + \delta \phi_{C_k}$.

Finally, because we require $m \geq p-1$ in the setup for Theorem \ref{thm:ktermorderp}, it follows from the above assignments for $m$ and $p$ that we implicitly restrict $\gamma$ to positive values. Observe that in the above $k$-ary generalization, $\gamma$ corresponds to $\alpha/2$ and $\delta$ corresponds to $\beta$ from the 2-ary case. In the latter, negative values of $\alpha$ are permitted under appropriate constraints. This leads naturally to the question whether frequency sequences with negative values of $\gamma$ can also be obtained. We comment further on this open question in the next section.
\end{section}


\begin{section}{Future directions} \label{sec:conclusion}

In the course of our investigation of nested recursions in this paper we have identified several areas where there are questions about the possibility of extending our results. In this concluding section we collect and further discuss these open problems, which provide possible directions for future research in this area.

\subsection{Tree Superposition}

Recall that in Section \ref{sec:linearcomb}, $T_{s,j,m,p}$ is defined to be the tree with the skeleton of the infinite binary tree from Figure (\ref{fig:skeleton}) with $j$ cells in each leaf and labelling scheme as follows: each of the first $j-1$ cells of each leaf receives $p$ labels, while the last cell receives $p + m$ labels. All remaining regular nodes in $T_{s,j,m,p}$ get $pj - m$ labels each, and the supernodes receive $s$ labels each. To ensure that each leaf has at least one cell and that cells have a positive number of labels, we require $p,j \geq 1$. Likewise, to force regular nodes and supernodes to contain a non-negative number of labels, we need $s \geq 0$ and $0 \leq m \leq pj$. 

For such $s,j,m,p$, Theorem \ref{thm:lincomb} states that the cell counting sequence for $T_{s,j,m,p}$ solves nested recursion (\ref{eqn3}) with sufficient number of initial conditions that follow the tree. In Section \ref{sec:linearcomb}, we observed that $T_{s,j,m,p}$ remains well-defined for negative $m$ as long as $m > -p$. However, for $-p < m < 0$, the pruning operation on $T_{s,j,m,p}(n)$ defined in Section \ref{sec:linearcomb} does not produce a tree $T^*_{s,j,m,p}(n)$ that conforms to the labelling rules described earlier, so this pruning operation can not be used to derive a nested recursion whose solution sequence is the cell counting sequence for this tree (for a specific example consider pruning $T_{0,4,-2,9}(168)$). However, this does not exclude the possibility that there exists an alternative pruning operation that does lead to suitable nested recursions in case $-p < m < 0$. This leads to our first question:

\begin{question}
Fix $s,j,m,p$ such that $p,j \geq 1$, $s \geq 0$, $-p < m < 0$. Does there exist a 2-ary order $p$ recursion of the form (\ref{eq:Conolly}) that has a solution sequence given by the cell counting sequence of $T_{s,j,m,p}$?
\end{question}

For $s = 0$ and $m=bj$ for some $b$, $T_{s,j,m,p}$ is a superposition of $T_{0,j,j}$ and $T_{0,j,0}$ from Section \ref{sec:order2} and its frequency function is a linear combination $b \phi_{H_{0,j}} + (p-b) \phi_{R_{0,j}}$. Therefore, the answer to the above question would allow to determine whether there exists a 2-ary, order $p$ recursion of the form (\ref{eq:Conolly}) whose solution sequence is a linear combination of $\phi_{H_{0,j}}$ and $\phi_{R_{0,j}}$ with negative coefficients.

\subsection{Linear Combinations of Frequency Sequences with Negative Coefficients}

In order to consider this possibility, we begin by investigating the possibility of $k$-ary, order $p$ recursions $R(n)$ with slow solutions that have frequency sequences of the form $\lambda + \delta \phi_{C_k}$, where $\lambda$ and $\delta$ are constants. In this case, some necessary conditions must be met by $\lambda$ and $\delta$.

As $R(n)$ is slow we require that $\lambda + \delta \phi_{C_k} (v) \geq 1$ for all $n$. Since $\phi_{C_k}$ is unbounded it must be the case that $\delta \geq 0$, and since $\phi_{C_k}(v) = 1$ for many values of $v$, it must be that $\lambda + \delta \geq 1$.

There is another key condition that must be met due to the asymptotic behaviour of the sequence.
Let $h_v$ be the last occurrence of $v$ in the sequence $R(n)$.
Since $R(n)$ is slow we have that $h_v = \sum_{i=1}^v \left (\lambda + \delta \phi_{C_k}(i) \right)$.
It can be easily verified that $\lim_{v \to \infty} \frac{1}{v} \sum_{n=1}^v \phi_{C_k}(i) = k/(k-1)$
because $C_k(n)/n$ converges to $(k-1)/k$ (see \cite{DR}). Therefore, $\lim_{v \to \infty} h_v / v = \lambda + \delta \frac{k}{k-1}$.
But note that $R(h_v) = v$, and so we have that $\lim_{v \to \infty} R(h_v)/h_v = \frac{k-1}{(k-1)\lambda + k \delta}$.

It is not hard to see that $R(n)/n$ must have a limit (since it is slow with frequency sequence $\lambda + \delta \phi_C$).
On the other hand, as $R(n)$ is a solution to a $k$-ary order $p$ recursion of the form (\ref{eq:Conolly}), its recursive structure
implies that any limit of $R(n)/n$ must be either zero or $\frac{k-1}{kp}$ (see, for example, \cite{ConollyLike} Theorem 2.1). In our case,
the limit can not be zero since the subsequential limit $\frac{k-1}{(k-1)\lambda + k \delta}$ is nonzero for $k \geq 2$. Thus, the
limit of $R(n)/n$ must be $\frac{k-1}{kp}$, and equating this to the subsequential limit $\frac{k-1}{(k-1)\lambda + k \delta}$ implies
that $(k-1) \lambda = k (p-\delta)$. So $k$ divides $\lambda$ since $k$ is relatively prime to $k-1$.
Therefore, $\lambda = \gamma k$ with $\gamma k + \delta \geq 1$ and $ \delta \geq 0$. This explains why in the previous section we limited our consideration to frequency sequences of the form $\gamma \phi_{H_k} + \delta \phi_{C_k}= \gamma k +\delta \phi_{C_k}$, as well as the close connection of this material to tree superpositions of the trees associated with $H_k$ and $C_k$.

The conditions $\gamma k + \delta \geq 1$ and $\delta \geq 0$ does not exclude negative values of $\gamma$.
In fact, if the frequency sequence of $R(n)$ is $\gamma k + \delta \phi_C$ with $\gamma < 0$ then $R(n)$ is the
leaf counting function $C_T(n)$ of the tree $T_{m,p,k}$ with $p = (k-1) \gamma + \delta \geq 1$ and $m = p-1+\gamma < p-1$.
Our proof of Theorem \ref{thm:ktermorderp} does not work for $m < p-1$ although the corresponding tree $T_{m,p,k}$ is
well defined. This leads to the following open problem of finding a $k$-ary, order $p$ recursion that is satisfied by $C_T(n)$ for
such $T$.

\begin{question}
Fix $\gamma < 0$ and $\delta \geq 0$ such that $\gamma k + \delta \geq 1$.
Does there exits a $k$-ary order $p$ recursion of the form (\ref{eq:Conolly}) that has
a slow solution with frequency sequence $\gamma k + \delta \phi_{C_k}$?
\end{question}

In fact, for fixed choices of $\gamma$ and $\delta$, there can be a multitude of recursions whose solution
sequences have the common frequency sequence $\gamma k + \delta \phi_{C_k}$. Classifying all such recursions is nontrivial.
Initial empirical evidence suggests that the following recursion may be a good candidate with $p = (k-1)\gamma + \delta$:
\begin{equation*}
R(n) = \sum_{i=1}^k R(n - (i-1)(p + \gamma) - R(n-1) - \sum_{t=1}^{|\gamma|} R(n-1-tk) - \sum_{t=1}^{p-|\gamma|-1} R(n-1-|\gamma|k-2t)).
\end{equation*}

\subsection{Ceiling Function Solutions to $k$-ary Order $p$ Recursions}

In \cite{ConollyLike} it is shown that $\lceil n/2p \rceil$ is the solution of ~(\ref{rec2}). In the $k$-ary recursion (\ref{eq:ktermorderp}), if we set $p = (k-1)q$ and $m = kq-1$ in (\ref{eq:ktermorderp}) it is easy to see that we get the natural generalization of this result, namely, the solution sequence is $\lceil n/kq \rceil$ (this follows because the resulting tree associated with this recursion contains $kq$ labels in each leaf, and no labels elsewhere).

Observe that if the ceiling function $\lceil \frac{an}{b} \rceil$ is a solution of any $k$-ary order $p$ recursions of the form (\ref{eq:Conolly})
with integers $a$ and $b$, then it must be that $\frac{a}{b} = \frac{k-1}{kp}$. This follows from the fact that $R(n)/n$
converges to $\frac{a}{b}$ while the recursive structure of (\ref{eq:Conolly}) implies that such a limit must be either 0 or $\frac{k-1}{kp}$
(see \cite{ConollyLike}).
This leads to the following open question that first appears in \cite{IVTCeil}:

\begin{question}
For $k \geq 3$ and $p$ not dividing $k-1$, does the ceiling function $\lceil \frac{(k-1)n}{kp} \rceil$ satisfy a $k$-ary
order $p$ recursion of the form ~(\ref{eq:Conolly})?
\end{question}

It is conjectured in \cite{IVTCeil} that no ceiling function solutions can occur unless $p = (k-1)q$.

\subsection{More Simultaneous Parameters}

In \cite{Isgur} the simultaneous parameter $q$ is introduced in the following 2-ary recursion:

\begin{equation}\label{eq:q}
R_{s,j,-q}(n) = R_{s,j,-q}(n-s-R_{s,j,-q}(n-j))+R_{s,j,-q}(n-s-j-R_{s,j,-q}(n-2j+q))
\end{equation}
where $s$ is a nonnegative integer, $j$ is a positive integer, and $q$ is an integer with $0 \leq q \leq j$. This recursion is then solved using a tree-based methodology. 

It is shown that the parameter $q$ is analogous to the parameter $m$, and as with $m$, the solution counts the number of nonempty cells in the leaves of an appropriately constructed infinite binary tree. Note the use of the negative subscript on $q$ in ($\ref{eq:q}$). This is because we want to think of recursion ($\ref{eq:q}$) as the ``negative" end of recursion ($\ref{eq:m}$); when $m=q=0$, the two coincide and are the same as recursion ($\ref{eq:0jj2j}$).
As with $(\ref{eq:m})$, extending ($\ref{eq:q}$) to $q<0$ or $q>j$ appears to never lead to well-defined solution sequences. 

Since there are strong analogies between the parameters $m$ and $q$ for certain arity 2 recursions, and since we showed in Section \ref{sec:karyorderp}
that it is possible to introduce the simultaneous parameter $m$ into arity $k$ recursions that can be solved by our tree-based methods, it is natural to ask if we can do something similar for $q$. As a possible candidate for the arity $k$ recursion family we now introduce the simultaneous parameter $j$ into the $k$-ary Conolly recursion \ref{ktermConolly} as follows:

\begin{equation}\label{eq:C(s,j,k)}
C_{s,j,k}(n) = \sum_{i=1}^k C_{s,j,k}(n-s-(i-1)j-C_{s,j,k}(n-ij))
\end{equation}

We pose the following open question:

\begin{question}
For $k \geq 3$ is it possible to find a $k$-ary family of recursions based on (\ref{eq:C(s,j,k)}) that can be solved by tree-based methods and that contain an analogue to the simultaneous parameter $q$?
\end{question}

\end{section}


\begin{thebibliography}{99}

\bibitem{BLT} B. Balamohan, Z. Li, and S. Tanny, A combinatorial interpretation for certain relatives of the Conolly sequence, \emph{J. Integer Seq.} \textbf{11} (2008), Article 08.2.1.

\bibitem{Con} B.W. Conolly, Fibonacci and meta-Fibonacci sequences, in: S. Vajda. ed., \emph{Fibonacci \& Lucas Numbers
and the Golden Section: Theory and Applications}, E. Horwood Ltd., Chichester, 1989, 127--139.

\bibitem{ConollyLike} A. Erickson, A. Isgur, B.W. Jackson, F. Ruskey and S. Tanny, Nested Recurrence Relations with Conolly-like Solutions, \emph{Siam J. Discrete Math}, \textbf{26} (1) (2012), 206--238

\bibitem{GEB} D. R. Hofstadter, \emph{G\"odel, Escher, Bach: An Eternal Golden Braid}, Random House, 1979.

\bibitem{Isgur} A. Isgur, Solving nested recursions with trees, \emph{Ph. D. thesis}, 2012, University of Toronto.

\bibitem{IVTCeil} A. Isgur, V. Kuznetsov, and S. Tanny,  Nested recursions with ceiling function solutions, {\em J.  Difference Equations and Applications}, \textit{J. of Difference Equations and Applications}, 2011, 1--10 (DOI:10.1080/10236198.2012.662967).

\bibitem{IVT} A. Isgur, V. Kuznetsov, and S. Tanny, A combinatorial approach for solving certain nested recursions
with non-slow solutions, \emph{J. Difference Equations and Applications}, 2012, 1--10 DOI: 10.1080/10236198.2012.662967.

\bibitem{IR} A. Isgur, M. Rahman, On variants of Conway and Conolly's Meta-Fibonacci recursions, \emph{Electron. J. Combin.} \textbf{18} (1) (2011), P96.

\bibitem{Nonhom} A. Isgur, M. Rahman, and S. Tanny, Solving non-homogeneous nested recursions using trees, \emph{Annals of Combinatorics}, to appear; arXiv:1105.2351v2

\bibitem{Rpaper} A. Isgur, D. Reiss, and S. Tanny, Trees and meta-Fibonacci sequences, \emph{Electron. J. Combin.} \textbf{16} (2009), R129.

\bibitem{JR} B. Jackson and F. Ruskey, Meta-Fibonacci sequences, binary trees and extremal compact codes, \emph{Electron. J. Combin.} \textbf{13} (2006), R26.

\bibitem{DR} F. Ruskey and C. Deugau, The combinatorics of certain $k$-ary meta-Fibonacci sequences, \emph{J. Integer Seq.} \textbf{12} (2009), Article 09.4.3.

\end{thebibliography}
\end{document}